\newif\ifsiamart
\newcommand{\email}[1]{\href{mailto:#1}{#1}}
\newenvironment{keywords}{\small \noindent\begin{quote}\textbf{Keywords.}}{\end{quote}}
\newenvironment{AMS}{\vspace{.2cm}\small \noindent \begin{quote}\textbf{MSC codes.}}{\end{quote}}
\DeclarePairedDelimiter\myabs{\lvert}{\rvert}
\newcommand{\norm}[1]{\left\lVert#1\right\rVert}
\DeclareMathOperator{\I}{I}
\DeclareMathOperator{\diag}{diag}
\DeclareMathOperator{\Law}{Law}
\DeclareMathOperator{\e}{e}
\DeclareMathOperator*{\mytrace}{tr}
\newcommand{\hessian}{\operatorname{D}^2}
\newcommand{\placeholder}{\mathord{\color{black!33}\bullet}}%
\newcommand{\range}[2]{\llbracket #1, #2 \rrbracket}
\newcommand{\normal}{\mathcal{N}}
\renewcommand{\d}{\mathrm d}
\newcommand{\wm}{\mathcal M_{\beta}}
\newcommand{\wc}{\mathcal C_{\beta}}
\newcommand{\N}{\mathbf{N}}
\newcommand{\R}{\mathbf R}
\newcommand{\real}{\R}
\newcommand{\nat}{\N}
\newcommand{\proba}{\mathbf P}
\newcommand{\expect}{\mathbf{E}}
\renewcommand{\t}{\mathsf T}
\renewcommand{\leq}{\leqslant}
\renewcommand{\geq}{\geqslant}
\renewcommand{\le}{\leqslant}
\renewcommand{\ge}{\geqslant}
\newcommand{\particleNumber}[1]{(#1)}
\newcommand{\xn}[1]{X^{\particleNumber{#1}}}
\newcommand{\yn}[1]{Y^{\particleNumber{#1}}}
\newcommand{\xl}{\overline X}
\newcommand{\xnl}[1]{\overline X^{\particleNumber{#1}}}
\newcommand{\mfldis}{\overline{\rho}}
\newcommand{\wien}{W} %
\newcommand{\wienn}[1]{\wien^{\particleNumber{#1}}}
\newtheorem{remark}[theorem]{Remark}
\newtheorem{assumption}{Assumption}
\theoremstyle{plain}
\newtheorem{theorem}{Theorem}
\newtheorem{lemma}[theorem]{Lemma}
\newtheorem{corollary}[theorem]{Corollary}
\newtheorem{proposition}[theorem]{Proposition}
\newtheorem{remark}[theorem]{Remark}
\newtheorem{assumption}{Assumption}
\crefname{lemma}{Lemma}{Lemmas}
\crefname{remark}{Remark}{Remarks}
\crefname{assumption}{Assumption}{Assumptions}
\crefname{proposition}{Proposition}{Propositions}
\crefname{section}{Section}{Sections}
\crefname{subsection}{Subsection}{Subsections}
\crefname{equation}{}{}
\Crefname{equation}{Equation}{Equations}
\newlist{lemmaenum}{enumerate}{3}
\setlist[lemmaenum]{label=(\alph*),ref=\,(\alph*)}
\crefname{lemmaenum}{Lemma}{Lemmas}
\newlist{assumpenum}{enumerate}{5}
\setlist[assumpenum]{label=(\alph*), font={\bfseries}}
\newlist{auxenum}{enumerate}{2}
\setlist[auxenum]{label=(\alph*),ref=(\alph*)}
\crefname{auxenumi}{Item}{Items}
\crefname{enumi}{}{}
\crefname{equation}{}{}
\crefname{assumpenumi}{}{}
\crefname{assumpenumii}{}{}
\Crefname{assumpenumi}{Assumption}{Assumptions}
\Crefname{assumpenumii}{Assumption}{Assumptions}
\Crefname{assumpenumii}{Assumption}{Assumptions}
\Crefname{lemmaenumi}{Part}{Parts}
\Crefname{figure}{Figure}{Figures}
\numberwithin{equation}{section}
\numberwithin{theorem}{section}
\let\oldparagraph=\paragraph
\renewcommand\paragraph[1]{\oldparagraph{#1.}}
\newcommand{\empLetter}{\mu}
\newcommand{\emp}[1]{\empLetter^{#1}} %
\newcommand{\cbodifffunc}{\ensuremath{S}}
\definecolor{darkred}{rgb}{.7,0,0}
\definecolor{darkgreen}{rgb}{.1,.7,0}
\newcommand{\wasserstein}{\mathcal W}
\begin{document}
\title{Mean-field limits for Consensus-Based Optimization and Sampling}

\ifsiamart
    \author{%
        Nicolai Jurek Gerber\thanks{%
            Hausdorff Center for Mathematics, Rheinische Friedrich-Wilhelms-Universität, Bonn, Germany (before September 2023),
            Institute of Applied Analysis, Ulm University, Germany (since October 2024)
            (\email{nicolai.gerber@uni-bonn.de}).
        } \and
        Franca Hoffmann\thanks{%
            Department of Computing and Mathematical Sciences, Caltech, USA (\email{franca.hoffmann@caltech.edu}).
        } \and
        Urbain Vaes\thanks{%
            MATHERIALS, Inria Paris \& CERMICS, \'Ecole des Ponts, France (\email{urbain.vaes@inria.fr})
        }
    }
\else
    \author[1,2]{Nicolai Jurek Gerber$^{a,}$}
    \author[3]{Franca Hoffmann$^{b,}$}
    \author[4,5]{Urbain Vaes$^{c,}$}
    \affil[ ]{\footnotesize $^a$\email{nicolai.gerber@uni-bonn.de},
        $^b$\email{franca.hoffmann@caltech.edu},
        $^c$\email{urbain.vaes@inria.fr}}
    \affil[1]{\footnotesize Hausdorff Center for Mathematics, Rheinische Friedrich-Wilhelms-Universität, Bonn, Germany (before September 2023)}
    \affil[2]{\footnotesize Institute of Applied Analysis, Ulm University, Germany (since October 2024)}
    \affil[3]{\footnotesize Department of Computing and Mathematical Sciences, Caltech, USA}
    \affil[4]{\footnotesize MATHERIALS project-team, Inria Paris}
    \affil[5]{\footnotesize CERMICS, \'Ecole des Ponts, France}
    \date{\today}
\fi

\maketitle

\begin{abstract}
    For algorithms based on interacting particle systems that admit a mean-field description, convergence analysis is often more accessible at the mean-field level. In order to transfer convergence results obtained at the mean-field level to the finite ensemble size setting, it is desirable to show that the particle dynamics converge in an appropriate sense to the corresponding mean-field dynamics. In this paper, we prove quantitative mean-field limit results for two related interacting particle systems: Consensus-Based Optimization and Consensus-Based Sampling. Our approach requires a generalization of Sznitman’s classical argument: in order to circumvent issues related to the lack of global Lipschitz continuity of the coefficients, we discard an event of small probability, the contribution of which is controlled using moment estimates for the particle systems. In addition, we present new results on the well-posedness of the particle systems and their mean-field limit, and provide novel stability estimates for the weighted mean and the weighted covariance.
\end{abstract}

\begin{keywords}
    Mean-field limits,
    Interacting particle systems,
    Consensus-Based Optimization,
    Consensus-Based Sampling,
    Coupling methods,
    Wasserstein stability estimates.
\end{keywords}

\begin{AMS}
    35Q93, %
    65C35, %
    70F45, %
    35K55. %
\end{AMS}

\section{Introduction}
Particle methods provide a powerful paradigm for solving complex global optimization problems leading to highly parallelizable algorithms.
Despite widespread and growing adoption, theory underpinning their behavior has been mainly based on meta-heuristics. In application settings involving black-box procedures, or where gradients are too costly to obtain, one relies on derivative-free approaches instead.

This work focuses on the rigorous derivation of quantitative mean-field limits for the recently proposed derivative-free algorithms Consensus-Based Optimization (CBO) and Consensus-Based Sampling (CBS), as well as their variants.
The former, initially proposed in~\cite{CBO}, is an algorithm for global optimization tasks,
while the latter, proposed in~\cite{CBS-Carrillo2021},
can be employed for both sampling and optimization purposes.
Both methods are based on an interacting particle system with only locally,
not globally, Lipschitz coefficients and interaction kernels,
which makes it challenging to rigorously prove propagation of chaos.

CBO and CBS methods are a novel family of algorithms providing an alternative to the popular particle swarm optimization (PSO) algorithm \cite{Kennedy_Eberhart, Kennedy1997ThePS} that is found in many optimization toolboxes.
Recent studies have shown that CBO is competitive with PSO for a number of benchmark functions~\cite{totzeck2018numerical},
and able to solve high-dimensional optimization problems arising from machine learning~\cite{CBO-high-dim,MR4329816}.
For a detailed comparison between CBO and PSO,
and additional context on the connections between the two methods,
we refer the reader to the articles~\cite{CBO-PSO-zero-inertia-1,grassi2023mean,huang2023global}.

By now, CBO and CBS have also been adapted for various other tasks and application settings,
including finance \cite{MR4337731}, rare event simulation~\cite{2023arXiv230409077A} and federated learning \cite{CBO-federal-carrillo2023},
to mention just a few.
Compared to~PSO however, this family of algorithms is more amenable to theoretical analysis.
Since current theoretical guarantees~for CBO and CBS are only available at the level of the mean-field description,
providing quantitative estimates for the distance between the evolution of the interacting particles and the corresponding mean-field probability distribution is essential.

The rest of this introductory section is organized as follows: we first introduce Consensus-Based Optimization (\cref{sub:CBO-intro}) and Consensus-Based Sampling (\cref{sub:CBS-intro}),
giving pointers to the relevant literature. In~\cref{sub:lit_rev}
we present a brief review of the literature on the use of coupling methods to prove propagation of chaos
for models with non-globally Lipschitz coefficients.
Finally,
we summarize the main contributions of this work in~\cref{sub:contrib},
and introduce the notation used throughout the paper in~\cref{sub:notation}.

Since Consensus-Based Sampling can be viewed as a variant on Consensus-Based Optimization,
we focus predominantly on the former and then briefly discuss the latter.

\subsection{Consensus-Based Optimization (CBO)}\label{sub:CBO-intro}
Given an objective function $f\colon \R^d \rightarrow \R$~to be minimized,
we consider the following version of Consensus-Based Optimization~(CBO):
\begin{align}
    \label{eq:cbo-particles}
    \d \xn{j}_t & = -\left(\xn{j}_t - \wm\left( \emp{J}_t\right)\right) \, \d t +  \cbodifffunc\left(\xn{j}_t - \wm\left( \emp{J}_t\right) \right) \, \d \wienn{j}_t,
    \qquad t\ge0, \qquad j \in \range{1}{J},
\end{align}
where $J>1$ is a natural number, $\range{1}{J} := \{1, \dots, J\}$ and $\emp{J}_t := \frac{1}{J} \sum_{i=1}^J \delta_{\xn{i}_t}$ denotes the empirical measure associated with the particle system.
For a probability measure~$\mu \in\mathcal{P}(\R^d)$,
the notation $\wm(\mu)$ denotes the mean of $\mu$ after reweighting by~$\e^{-\beta f}$.
Its precise definition, together with its expression when $\mu = \emp{J}_t$,
is given by
\begin{align}
    \label{eq:wmean-emp-intro}
    \wm(\mu):= \frac{\int_{\R^d} x \e^{-\beta f(x)} \, \mu(\d x)}
    {\int_{\R^d} \e^{-\beta f(x)} \, \mu(\d x)},
    \qquad
    \wm (\emp{J}_t) :=\frac{\sum_{j=1}^J \xn{j} \e^{-\beta f\bigl(\xn{j}_t\bigr)}} {\sum_{k=1}^J  \e^{-\beta f\bigl(\xn{k}_t\bigr)}}.
\end{align}
The weighted mean $\wm(\emp{J}_t)$ may be viewed as an approximation of the particle position~$\xn{j}_t$ such that~$f(\xn{j}_t)$ is minimal.
In~\eqref{eq:cbo-particles}, the processes~$\bigl(\wienn{j}_t\bigr)_{j \in \range{1}{J}}$ are independent $\R^d$-valued Brownian motions and
$\cbodifffunc\colon \R^d\to\R^{d\times d}$ is a globally Lipschitz continuous function.
The two diffusion types commonly studied in the literature are
isotropic diffusion~\cite{CBO,carrillo2018analytical,fornasier2021consensusbased}, with~$\cbodifffunc(u)=\sqrt{2 \theta}\lvert u \rvert \I_d$,
and anisotropic or component-wise~\cite{CBO-high-dim,FornasierKlockRiedl2022-anisotropic} diffusion
with~$\cbodifffunc(u)=\sqrt{2 \theta}\diag(u)$.
Here~$\I_d\in \R^{d\times d}$ is the identity matrix and $\diag\colon \R^d\to \R^{d \times d}$ the operator mapping a vector onto a diagonal matrix with the vector as its diagonal.
It was shown in~\cite{CBO-high-dim} that CBO with anisotropic diffusion is well-suited for high-dimensional global optimization problems,
for example those arising in machine learning.
Commonly a constant coefficient is included in front of the drift term in~\eqref{eq:cbo-particles},
but since this coefficient can be removed by rescaling time upon redefining the diffusion parameter~$\theta$,
we choose to omit it.

By design, CBO allows passage to the corresponding mean-field description,
which is not the case for the~PSO algorithm.
Indeed, since the particles in~\eqref{eq:cbo-particles} evolve towards the weighted mean~\eqref{eq:wmean-emp-intro}
instead of the current best particle, as is the case for Particle Swarm Optimization (PSO)~\cite{PSO},
it is possible to formally write down the corresponding mean-field equation:
\begin{equation}
    \label{eq:cbo-mf}
    \left\{
    \begin{aligned}
        \d\xl_t   & = -\left(\xl_t - \wm( \mfldis_t)\right) \, \d t + \cbodifffunc\left(\xl_t - \wm( \mfldis_t) \right) \, \d \wien_t, \\
        \mfldis_t & = \Law(\xl_t)\,,
    \end{aligned}
    \right.
\end{equation}
where the weighted mean $\wm(\mu)$ for general measures $\mu \in\mathcal{P}(\R^d)$ can be interpreted as a smooth approximation of the `current best' that appears in PSO \cite{CBO-PSO-zero-inertia-1}.
The connection between the particle system~\eqref{eq:cbo-particles} and the dynamics~\eqref{eq:cbo-mf},
which is of McKean--Vlasov type and comparatively simpler to analyze mathematically,
stimulated theoretical research using tools from stochastic analysis and partial differential equations \cite{CBO-mfl-Huang2021, carrillo2018analytical, fornasier2021consensusbased}.
In particular, consensus formation and convergence of the CBO mean-field dynamics \eqref{eq:cbo-mf} to a consensus point as $t\to\infty$ has been proven in \cite{carrillo2018analytical}.
While the consensus point does not necessarily coincide with the global minimizer of the cost function $f$,
\cite[Theorem~4.2]{carrillo2018analytical} shows that the distance between the consensus point and the true global minimizer can be made arbitrarily small by choosing a large parameter~$\beta$.
The proof in \cite{carrillo2018analytical} covers also non-convex cost functions $f$.

The convergence, in an appropriate sense, of the empirical measure associated with the CBO finite-size particle system~\eqref{eq:cbo-particles} to the solution of the mean-field equation~\eqref{eq:cbo-mf}
in the large particle limit $J \to \infty$ was first proved rigorously in a recent paper~\cite{CBO-mfl-Huang2021} using a compactness argument,
as we discuss further in~\cref{sub:lit_rev}.
However, the method of proof employed in \cite{CBO-mfl-Huang2021} does not provide an explicit rate of convergence with respect to the number of particles.
Obtaining such an explicit rate of convergence is highly desirable,
as it would enable to transfer convergence results concerning the mean-field equation~\eqref{eq:cbo-mf} to the finite ensemble size setting~\eqref{eq:cbo-particles} by means of a triangle inequality argument,
modulo appropriate interfacing of the metrics.
This is the approach proposed in~\cite[Theorem 13]{fornasier2021consensusbased},
where the authors obtain a partial mean-field result in a set of high probability,
enabling them to prove convergence with high probability to the global minimizer in the finite ensemble size setting.
Here, we provide a stronger version of the mean-field limit result given in~\cite[Proposition 15]{fornasier2021consensusbased}, extending to more general diffusion terms and
providing scope for global convergence results in a stronger metric than convergence in probability.

A number of extensions of CBO have been proposed in the literature.
As mentioned previously,
using an anisotropic diffusion term results in a scheme which is suitable for high-dimensional state spaces,
and has been applied successfully in the training of artificial neural networks \cite{CBO-high-dim}.
Using componentwise geometric Brownian motion in this way allows for proving consensus formation directly at the particle level,
which was achieved in~\cite{Ha2019,Ha2020ConvergenceAE,HA_Discrete_CBO_hetero_noise}.
(On this subject, we point out that the results in~\cite{Ha2020ConvergenceAE} on the proximity of the consensus point to the global minimizer concern only the best case scenario;
a more general convergence result in probability or a different metric is still lacking.)
We also mention adaptations with mini-batching~\cite{jin2018random},
memory effects~\cite{MR4160256}, and Lévy noise~\cite{MR4553241},
as well as extensions to multi-objective optimization~\cite{klamroth2022consensusbased,borghi2022adaptive}
and global optimization with constraints~\cite{MR4538901,carrillo2024interacting,CTV21,CBO-hypersurface-Fornasier_2020,beddrich2024constrained, MR4329816, CBO_sphere2,MR4337731}.
Moreover,
a variant tailored for objective functions with several minima was proposed in~\cite{bungert},
and results establishing a link between CBO and PSO in the so-called small inertia limit were obtained in~\cite{CBO-PSO-zero-inertia-1, CBO-PSO-zero-inertia-2}.
Many of the recent advances concerning the mathematical analysis of CBO and its extensions were recently overviewed in~\cite{CBO-survey-Totzeck2022}.
Furthermore, a free software implementation of CBO and some of its extensions was recently published~\cite{Bailo2024}.

\subsection{Consensus-Based Sampling (CBS)}\label{sub:CBS-intro}

Consensus-Based Sampling is a sampling method inspired by the Ensemble Kalman Sampler (EKS)~\cite{EKS-1-Hofmann-Stuart-Li-GI-2019} and by consensus-based approaches such as CBO.
It is based on an interacting particle system with a noise term preconditioned by the square root of the empirical covariance.
The method in the finite ensemble size and continuous time setting reads
\begin{equation}
    \label{eq:cbs-particles}
    \d \xn{j}_t = -\Bigl(\xn{j}_t-\wm\left(\emp{J}_t\right)\Bigr) \, \d t + \sqrt{2\lambda^{-1}\wc\left(\emp{J}_t\right)} \, \d \wienn{j}_t, \qquad j \in \range{1}{J},
\end{equation}
with parameters $\beta, \lambda >0$.
Here, for a probability measure $\mu \in\mathcal{P}(\R^d)$,
the notation $\wc(\mu)$ denotes the covariance of $\mu$ after reweighting by~$\e^{-\beta f}$;
see~\eqref{eq:weighted_mean} in \cref{sub:notation} for the precise definition.
It is possible to formally write down the mean-field equation corresponding to~\eqref{eq:cbs-particles}:
\begin{align}
    \label{eq:cbs-mf}
    \left\{
    \begin{aligned}
        \d \xl_t  & = -\bigl(\xl_t-\wm\left(\mfldis_t\right)\bigr) \, \d t + \sqrt{2\lambda^{-1}\wc(\mfldis_t)} \, \d \wien_t, \\
        \mfldis_t & = \Law(\xl_t).
    \end{aligned}
    \right.
\end{align}
When the parameters satisfy $\lambda = (1+\beta)^{-1}$,
the interacting particle system~\eqref{eq:cbs-particles} enables to generate approximate samples from the probability distribution with density proportional to~$\e^{-f}$,
that is from
\[
    \rho_f := \frac{\e^{-f}}{\mathcal Z}, \qquad \mathcal Z := \int_{\R^d} \e^{-f(x)} \, \d x.
\]
More precisely, the authors of \cite{CBS-Carrillo2021} show that for uniformly convex $f$ and Gaussian initial data~$\mfldis_0$,
the mean-field distribution~$\mfldis_t$ converges exponentially in Wasserstein distance as~$t \to \infty$
to a Gaussian $\normal(m_{\infty}, C_{\infty})$ that can be made arbitrarily close to the Laplace approximation of~$\rho_f$ by choosing a sufficiently large parameter~$\beta$.
In addition, it was noticed in~\cite{CBS-Carrillo2021} that,
with $\lambda :=1$, the dynamics~\eqref{eq:cbs-particles} behaves as a global optimization method for finding the global minimum of $f\colon \R^d \rightarrow \R$,
just like the Consensus-Based Optimization method proposed in~\cite{CBO}.
Since the convergence analysis in~\cite{CBS-Carrillo2021} is confined to the mean-field level,
it is of interest to prove rigorously that the particle dynamics~\eqref{eq:cbs-particles} converges in an appropriate sense to the corresponding mean-field dynamics~\eqref{eq:cbs-mf} as $J \to \infty$. This has so far been left as an open problem.
Recently, consensus-based sampling was extended to enable sampling of multi-modal distributions in~\cite{bungert},
and for rare event estimation in~\cite{2023arXiv230409077A}.     It has also been shown that CBS,
together with other sampling methods based on interacting particle systems,
can be metropolized to correct the sampling error~\cite{sprungk2023metropolisadjustedinteractingparticlesampling}.

\subsection{Propagation of chaos}
\label{sub:lit_rev}

The CBO~\eqref{eq:cbo-particles} and~CBS~\eqref{eq:cbs-particles} dynamics can both be recast as interacting particle systems of the form
\begin{equation}
    \label{eq:sde-ips}
    \d \xn{j}_t = b\left(\xn{j}_t, \mu^J_t\right) \, \d t + \sigma\left(\xn{j}_t, \mu^J_t\right) \, \d W^{(j)},
    \qquad j \in \llbracket 1, J \rrbracket,
\end{equation}
with drift coefficient~$b$ and diffusion coefficient~$\sigma$ depending on~$\mu$ through a convolution with appropriate kernels~$K_b$ and $K_{\sigma}$:
\begin{equation}
    \label{eq:form_coefficients}
    b(x, \mu) = \widetilde b(x, K_b \star \mu), \qquad
    \sigma(x, \mu) = \widetilde \sigma(x, K_{\sigma} \star \mu).
\end{equation}
Although the consensus-based dynamics can be written in the form~\eqref{eq:sde-ips},
it is not possible to do so in such a way that $\widetilde{b}, \widetilde{\sigma}, K_b$ and $K_{\sigma}$ are all globally Lipschitz continuous and
at the same time $K_b$ and $K_{\sigma}$ are globally bounded.
Therefore, the classical McKean's theorem~\cite[Theorem 3.1]{ReviewChaintronII} is not directly applicable,
and a different approach is needed.
We briefly discuss hereafter the existing mean-field results for CBO,
noting that no results have been obtained so far for CBS.
\begin{itemize}

    \item
          In~\cite{CBO-mfl-Huang2021},
          propagation of chaos for CBO is proved through a compactness argument based on Prokhorov's theorem.
          However, this method comes without convergence rates in terms of the number of particles~$J$.

    \item
          Later,
          propagation of chaos by means of synchronous coupling was shown with optimal rates in \cite{CBO-hypersurface-Fornasier_2020}
          for a variant of CBO constrained to hypersurfaces.

    \item
          Recent results in~\cite{fornasier2021consensusbased} also show a probabilistic mean-field approximation of the form
          \[
              \sup_{t\in[0,T]}\expect\left[\myabs*{\xn{j}_t-\xnl{j}_t}^2 \mathsf 1_{\Omega \setminus \Omega_M}\right] \le \frac{C}{J},
              \qquad
              \Omega_M:=\left\{\sup_{t\in[0,T]} \frac{1}{J}\sum_{j=1}^J\max\left\{\left\lvert \xn{j}_t \right\rvert^4, \left\lvert \xnl{j}_t \right\rvert^4\right\} \ge M\right\}.
          \]
          It is additionally proved in~\cite{fornasier2021consensusbased},
          by application of Markov's inequality,
          that the set of realizations $\Omega_M$ has probability close to~0 for large~$M$.
          Specifically, it holds that $\proba (\Omega_M) = \mathcal O(M^{-1})$.

    \item
          Shortly before completing this work,
          we found out that propagation of chaos for a variant of CBO had been proved using a coupling approach very recently in~\cite{MR4553241}.
          In that work, the lack of global Lipschitz continuity of the weighted mean is handled by using stopping times,
          similarly to the approach we present in~\cref{sub:stopping_times},
          which was developed independently.
          There are, however, several differences between our results and those presented in~\cite{MR4553241}.
          Apart from the assumptions, which are slightly less restrictive in our work,
          the main technical differences are that
          (i)~we improve existing Wasserstein stability estimates on the weighted mean~(\cref{lemma:stab:improved}),
          (ii)~we rely on a powerful result from the statistics literature~\cite{MR3696003,MR2597592} to bound one of the terms in Sznitman's argument~(\cref{lemma:convergence_weighted_mean_iid}) and
          (iii)~we use a concentration inequality to bound the probability that empirical measures exit a compact set~(\cref{sub:key_lemma}).
          Combined together, these ingredients enable to obtain mean-field result with the usual algebraic rate of convergence with respect to~$J$,
          whereas the error scales as $\ln(\ln(J))^{-1}$ in the mean-field result obtained in~\cite{MR4553241}.

    \item

          Finally,
          we mention the recent preprint~\cite{koß2024meanfieldlimitconsensus},
          which appeared a few months after this one and proves propagation of chaos for consensus-based sampling using a compactness argument,
          similar to the approach used in~\cite{CBO-mfl-Huang2021}.

\end{itemize}

\subsection{Our contributions}
\label{sub:contrib}

The main contributions of this paper are the following.
\begin{itemize}
    \item
          We obtain quantitative results on the mean-field limit for CBO and CBS.
          More precisely, we prove results with an explicit rate of convergence with respect to the number~$J$ of particles.
          Investigating the roles of the dimension and other problem parameters on the value of the constant prefactor is left for future work.

    \item
          We present two approaches for dealing with the weighted mean (in the case of CBO) and the weighted covariance (in the case of CBS).
          One of these approaches is based on the careful use of indicator sets,
          whereas the other is based on stopping times.
          The latter approach is inspired by a method initially proposed in~\cite{MR1949404}
          in order to handle local Lipschitz continuity issues
          in the context of numerical analysis for the Euler--Maruyama method.

    \item
          In order to prove mean-field results,
          we generalize existing theorems and prove new results on the well-posedness of the particle systems and their mean-field limit.
          We also present novel stability estimates for the weighted mean and the weighted covariance.
\end{itemize}

The rest of this paper is organized as follows.
In~\cref{sec:main},
we present our main results for the consensus-based dynamics~\eqref{eq:cbo-particles} and~\eqref{eq:cbs-particles}.
They are based on a number of auxiliary results,
which are stated precisely and proved in~\cref{sec:aux}.
These include Wasserstein stability estimates for the weighted mean and the weighted covariance,
as well as moment estimates for the particle systems.
In~\cref{sec:cbo-cbs-wellp},
we prove well-posedness results for the particle systems and their mean-field limits.
\Cref{sec:conclusion} is reserved for extensions and perspectives for future work.
Additional technical results are presented in~\cref{sec:aux-results}.
The interdependence of the sections is illustrated in~\cref{figure:organization}.

\begin{figure}[ht!]
    \centering
    \resizebox{.99\textwidth}{!}{%
        \begin{tikzpicture}
            \node (0) [draw, align=center, rounded rectangle, inner sep=.3cm, text width=6cm] {Main theorems on the mean-field limits for CBO and CBS. \\ \Cref{sec:main}};
            \node (1) [draw, align=center, rounded rectangle, inner sep=.2cm, text width=5cm, below left=1.5cm and 1cm of 0] {Wasserstein stability estimates for weighted moments. \\ \Cref{sub:stability_estimates}};
            \node (2) [draw, align=center, rounded rectangle, inner sep=.3cm, text width=4cm, right=of 1] {Moment estimates for empirical measures. \\ \Cref{sub:moment}};
            \node (3) [draw, align=center, rounded rectangle, inner sep=.3cm, text width=5cm, right=of 2] {Convergence of the weighted moments for i.i.d.\ samples. \\ \Cref{sub:iid}};
            \node (4) [draw, align=center, rounded rectangle, inner sep=.3cm, text width=5cm, below right=1.5cm and -1cm of 1] {Well-posedness result for the mean-field equations. \\ \Cref{sec:cbo-cbs-wellp}};
            \node (5) [draw, align=center, rounded rectangle, inner sep=.3cm, text width=4cm, below left=1.5cm and -1cm of 3] {Technical bound on weighted moments. \\ \Cref{sub:bound_weighted_moments}};
            \draw[->] (5) -- (1);
            \draw[->] (5) -- (2);
            \draw[->] (5) -- (4);
            \draw[->] (1) -- (0);
            \draw[->] (2) -- (0);
            \draw[->] (3) -- (0);
            \draw[->] (1) -- (4);
            \draw[->] (1) -- (0);
            \draw[->] (4) to [out=110, in=240] (0);
        \end{tikzpicture}
    }
    \caption{%
        Interdependence of the results contained in this paper.
    }%
    \label{figure:organization}
\end{figure}

\subsection{Notation}
\label{sub:notation}

We let $\N :=\{0,1,2,3,\dots\}$
and $\N^+ :=\{1,2,3,\dots\}$.
For symmetric matrices $X$ and $Y$,
the notation~$X \succcurlyeq Y$ means that $X - Y$ is positive semidefinite,
and the notation $X \succ Y$ means that $X -Y$ is strictly positive definite.
The notation $\I_d\in \R^{d\times d}$ refers to the identity matrix.
The Euclidean norm in~$\R^d$ is denoted by~$\myabs{\placeholder}$.
For a matrix $X \in \R^{d \times d}$,
the notation~$\norm{X}$ refers to the operator norm induced by the Euclidean vector norm,
and the notation~$\norm{X}_{\rm F}$ refers to the Frobenius norm.
The set of probability measures over a metric space~$E$ is denoted by~$\mathcal P(E)$,
and $\mathcal P_{p}(E) \subset \mathcal P(E)$ denotes the set of probability measures with finite moments up to order~$p$,
and~$\mathcal P_{p,R}(E) \subset \mathcal P_p(E)$ denotes the set of probability measures~$\mu$ such that
\[
    \left( \int_{E} \myabs{x}^p \, \mu(\d x) \right)^{\frac{1}{p}} \leq R.
\]
Note that $\mathcal P_{q,R}(E) \subset \mathcal P_{p,R}(E)$ for all $0 \leq p < q$.
If the metric space~$E$ is not specified,
then it is implicitly assumed that~$E = \R^d$.
For example, we sometimes write $\mathcal P_{p,R}$ as a shorthand notation for $\mathcal P_{p,R}(\R^d)$.
For a probability measure $\mu \in\mathcal{P}(\R^d)$
we use the following notation for the mean and covariance:
\begin{align}
    \mathcal{M}(\mu) = \int_{\R^d} x \, \mu(\d x),
    \qquad
    \mathcal{C}(\mu) = \int_{\R^d} \bigl(x - \mathcal M(\mu)\bigr) \otimes \bigl(x - \mathcal M(\mu)\bigr)\, \mu(\d x).
\end{align}
The notations $\wm(\mu)$ and $\wc(\mu)$ denote respectively
the mean and covariance after reweighting by~$\e^{-\beta f}$.
More precisely, we use the notation
\begin{equation}
    \label{eq:weighted_mean}
    \wm(\mu)
    = \mathcal M(L_{\beta} \mu), \qquad
    \wc(\mu) = \mathcal C(L_{\beta} \mu), \qquad
    L_\beta\mu := \frac{ \e^{-\beta f} \, \mu }{\int \e^{-\beta f} \, \d \mu}.
\end{equation}
For a probability measure $\mu \in \mathcal P(E)$ and a function $f\colon E \to \R$,
we use the short-hand notation
\[
    \mu [f] = \int_{E} f(x) \, \mu(\d x).
\]
By a slight abuse of notation,
we sometimes write $\mu[f(x)]$ instead of $\mu[f]$ for convenience.
For example, for a probability measure~$\mu \in \mathcal P_2(\R)$,
the notation $\mu\left[  x^2 \right]$ refers to the second raw moment of~$\mu$.
Furthermore, for~$J\in\N^+$ we denote by $\mu^J := \frac{1}{J} \sum_{j=1}^{J} \delta_{\xn{j}}$ and $\overline \mu^J := \frac{1}{J} \sum_{j=1}^{J} \delta_{\xnl{j}}$ the empirical measures of the particles~$\xn{1},\dots, \xn{J}$ and $\xnl{1}, \dots, \xnl{J}$, respectively.
Finally, throughout this paper,
the notation $\Omega$ refers to the sample space,
and the notation $C$
refers to a constant whose exact value is irrelevant in the context and may change from line to line.

\section{Main results}
\label{sec:main}

This section is organized as follows.
In~\cref{sec:setting}, we present the assumptions used throughout this work.
Then, after stating well-posedness results for CBO and CBS in~\cref{sub:well-posed},
we prove in~\cref{sub:key_lemma} a key preparatory result for the mean-field limits.
We then prove quantitative mean-field results for Consensus-Based Optimization and Sampling
in~\cref{sec:mfl_cbo,sec:mfl_cbs}, respectively.
Finally, a numerical experiment illustrating the convergence as~$J \to \infty$ for CBO is presented in~\cref{sub:experiment}.

\subsection{Main assumptions}
\label{sec:setting}

We assume throughout this paper that the diffusion operator $\cbodifffunc\colon \R^d\rightarrow \R^{d\times d}$ appearing in the CBO dynamics~\eqref{eq:cbo-particles} is globally Lipschitz continuous.
For the objective function~$f$,
we use the following assumption.

\begin{assumption}
    \label{assump:main}
    We assume the following properties for the target function $f$.
    \begin{assumpenum}[label=(H\arabic*)]
        \item
        \label{assump:f-local-lip-bound}
        The function $f\colon \R^d\rightarrow\R$ is bounded from below by $f_\star=\inf f$
        and satisfies
        \begin{align}
            \label{eq:assump-f:lip-growth-gradient}
            \forall x, y \in \R^d, \qquad
            |f(x)-f(y)| \le L_f \bigl(1+|x|+|y|\bigr)^s|x-y|,
        \end{align}
        for some constants $L_f>0$ and $s \ge 0$.

        \item \label{assump:f-at-infinity}
            There exist constants $c_{\ell}, C_{\ell}, c_u, C_{u} > 0$ and $u \geq \ell \geq 0$ such that
        \begin{subequations}
            \begin{align}
                \label{assump:f-upper-bound}
                \forall x \in \real^d, \qquad
                f(x) - f_{\star} \le c_u |x|^{u}+ C_u, \\
                \label{assump:f-lower-bound}
                \forall x \in \real^d, \qquad
                f(x) - f_\star \ge c_{\ell} |x|^{\ell} - C_\ell.
            \end{align}
        \end{subequations}
    \end{assumpenum}
\end{assumption}

In the rest of this paper,
we denote by $\mathcal A(s, \ell, u)$ the set of objective functions that satisfy~\cref{assump:main} with these parameters.
\begin{remark}
    \label{rmk:f-upper-bound}
    A few remarks are in order.
    \begin{itemize}
        \item The case $\ell=u=0$ corresponds to bounded $f$.
        \item If $f\in \mathcal A(s, \ell, u)$, then necessarily $\ell \le s+1$,
            because the set~$\mathcal A(s, \ell, u)$ is empty for $\ell > s + 1$.
        \item In the particular case where $s=1$ and either $u = \ell = 0$ or $u = \ell =2$,
        the assumptions in~\cref{assump:main} coincide with those in \cite{carrillo2018analytical}.
        The well-posedness and mean-field results proved in this paper generalize beyond this setting, as explained below.
    \end{itemize}
\end{remark} 
Our main results hold for $f \in \mathcal A(s, \ell, \ell)$,
i.e.\ under \cref{assump:main} with $\ell = u$.
The problem of proving mean-field limits for CBO/CBS in the case $\ell <u$ is discussed in \cref{sub:different_l_u}.

Many of the results we present hold only for values of $p$ in a range dictated by the interval of validity of the Wasserstein stability estimates presented in~\cref{sub:stability_estimates}.
In order to avoid repetitions,
we introduce the associated notation
\begin{equation}
    \label{eq:def_ps}
    p_{\mathcal M}(s, \ell) \coloneq \begin{cases}
        s+2,\, & \text{if $\ell = 0$}, \\
        1,\,   & \text{if $\ell > 0$},
    \end{cases}
    \qquad
    p_{\mathcal C}(s, \ell) \coloneq \begin{cases}
        s+3,\, & \text{if $\ell = 0$}, \\
        1,\,   & \text{if $\ell > 0$}.
    \end{cases}
\end{equation}

\subsection{Well-posedness for CBO and CBS}
\label{sub:well-posed}

In this section,
we state well-posedness results for CBO and CBS, referring for the proofs to \cref{sec:cbo-cbs-wellp}.
While for CBO, the following theorems are only generalizations of already existing results \cite{carrillo2018analytical},
the well-posedness proofs for the CBS particle and mean-field systems are new.

\begin{theorem}[Existence and uniqueness for the particle models]
    \label{thm:cbo-cbs-particles-well-posed}
    Assume that $f\colon \R^d \rightarrow \R$ is locally Lipschitz continuous and fix $J\in\N^+$.
    Then, the stochastic differential equations~\eqref{eq:cbo-particles} and~\eqref{eq:cbs-particles}
    have unique strong solutions $\bigl\{{\mathbf X}_t^{(J)}\,|\,t\ge 0\bigr\}$ for any initial condition ${\mathbf X}_0^{(J)}$ which is independent of the Brownian motions~$\mathbf{\wien}_t=\bigl(\wienn{1}_t, \dots, \wienn{J}_t\bigr)$.
    The solutions are almost surely continuous.
\end{theorem}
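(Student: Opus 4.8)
The plan is to establish existence and uniqueness of strong solutions on each compact time interval $[0,T]$ by a standard localization-and-patching argument, exploiting that the coefficients of both~\eqref{eq:cbo-particles} and~\eqref{eq:cbs-particles} are locally Lipschitz continuous in the full particle configuration $\mathbf{X}^{(J)} \in \R^{dJ}$ as soon as $f$ is locally Lipschitz continuous. First I would write the $J$-particle system as a single SDE on $\R^{dJ}$, $\d \mathbf{X}_t = \mathbf{b}(\mathbf{X}_t)\,\d t + \boldsymbol\sigma(\mathbf{X}_t)\,\d \mathbf{W}_t$, where the drift involves the weighted mean $\wm(\emp{J})$ and the diffusion involves $\cbodifffunc(\,\cdot\, - \wm(\emp{J}))$ (for CBO) or $\sqrt{2\lambda^{-1}\wc(\emp{J})}$ (for CBS). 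The key observation is that $\emp{J} \mapsto \wm(\emp{J})$ and $\emp{J} \mapsto \wc(\emp{J})$ are smooth functions of the particle positions because the denominator $\sum_k \e^{-\beta f(\xn{k})}$ is bounded below by $\e^{-\beta \max_k f(\xn{k})} > 0$ on any bounded set and never vanishes; combined with local Lipschitz continuity of $f$ (hence of $\e^{-\beta f}$) and of $\cbodifffunc$ and of the matrix square root on the positive-definite cone, this makes $\mathbf{b}$ and $\boldsymbol\sigma$ locally Lipschitz on $\R^{dJ}$.

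Next I would invoke the classical existence-and-uniqueness theorem for SDEs with locally Lipschitz coefficients (e.g. via truncation): for each $n$, replace the coefficients outside the ball $B_n \subset \R^{dJ}$ by globally Lipschitz truncations, obtain a unique strong solution $\mathbf{X}^{n}$, define the exit time $\tau_n := \inf\{t \ge 0 : |\mathbf{X}^n_t| \ge n\}$, and check the consistency relation $\mathbf{X}^n = \mathbf{X}^{n+1}$ on $[0,\tau_n]$ by pathwise uniqueness; this yields a unique strong solution up to the explosion time $\tau_\infty := \lim_n \tau_n$, with almost surely continuous paths. The remaining task is to prove non-explosion, $\tau_\infty = +\infty$ almost surely. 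For this I would derive an a priori moment bound: applying It\^o's formula to $|\mathbf{X}_t|^2 = \sum_{j=1}^J |\xn{j}_t|^2$ and using that $|\wm(\emp{J}_t)| \le \max_j |\xn{j}_t| \le |\mathbf{X}_t|$ (the weighted mean is a convex combination of the $\xn{j}_t$) and that $\|\wc(\emp{J}_t)\| \lesssim |\mathbf{X}_t|^2$ similarly, one gets $\d \expect|\mathbf{X}_{t\wedge\tau_n}|^2 \lesssim (1 + \expect|\mathbf{X}_{t\wedge\tau_n}|^2)\,\d t$ from the drift's linear structure and the at-most-quadratic growth of the diffusion coefficients; Grönwall then bounds $\sup_{t \le T}\expect|\mathbf{X}_{t\wedge\tau_n}|^2$ uniformly in $n$, which forces $\proba(\tau_\infty \le T) = 0$ for every $T$, hence global existence.

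The main obstacle — really the only nontrivial point — is handling the diffusion coefficient in the CBS case~\eqref{eq:cbs-particles}, namely $\sqrt{2\lambda^{-1}\wc(\emp{J}_t)}$: the principal square root is Lipschitz on sets of matrices bounded away from the boundary of the positive-semidefinite cone, but near degenerate covariance matrices (e.g. when all particles coincide, which a priori could happen) the square root is only $1/2$-H\"older, not locally Lipschitz, which would break pathwise uniqueness. The cleanest fix is to observe that $\wc(\emp{J}_t)$, being a weighted empirical covariance, is a $C^\infty$ (indeed polynomial-over-analytic) function of $\mathbf{X}_t$, so $\boldsymbol\sigma(\mathbf{X}) = \sqrt{2\lambda^{-1}\wc(\emp{J})}$ — while possibly not smooth where $\wc$ degenerates — still satisfies a \emph{pathwise-uniqueness} criterion: the Yamada--Watanabe condition applies since $\|\sqrt{A} - \sqrt{B}\|_{\rm F} \le \|A - B\|_{\rm F}^{1/2}$ for symmetric $A, B \succcurlyeq 0$ combined with the local Lipschitz continuity of $\mathbf{X} \mapsto \wc(\emp{J})$ gives a modulus $|\mathbf{X} - \mathbf{Y}|^{1/2}$ type bound, which is enough for strong uniqueness of one-dimensional-type comparison but needs care in the multidimensional setting. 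Alternatively, and more robustly, one can cite the general result that an SDE whose coefficients are locally Lipschitz off a Lebesgue-null set and whose diffusion matrix $\boldsymbol\sigma\boldsymbol\sigma^\t = 2\lambda^{-1}\mathrm{blockdiag}(\wc(\emp{J}),\dots,\wc(\emp{J}))$ is itself locally Lipschitz has a unique strong solution; since $\wc(\emp{J})$ is locally Lipschitz (even smooth), this sidesteps the square-root regularity issue entirely. I expect the paper to take this route — working with the locally Lipschitz diffusion \emph{matrix} rather than its square root — and to combine it with the Grönwall moment estimate above to conclude.
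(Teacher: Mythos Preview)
Your overall strategy—rewriting the system as a single SDE on $\R^{dJ}$, establishing local Lipschitz continuity of the coefficients, and ruling out explosion via a quadratic Lyapunov function—matches the paper's proof exactly, including the bounds $|\wm(\mathbf X)|\le|\mathbf X|$ and $\|\wc(\mathbf X)\|\le|\mathbf X|^2$ and the use of Khasminskii's non-explosion criterion with $\phi(\mathbf X)=\tfrac12|\mathbf X|^2$.

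However, there is a genuine gap in the CBS case, precisely at the point you flag as the ``only nontrivial'' one. Neither of your two proposed fixes actually works: multidimensional Yamada--Watanabe does not apply without strong structural assumptions, and Lipschitz continuity of $\sigma\sigma^\t$ alone yields only \emph{weak} uniqueness (via the Stroock--Varadhan martingale problem), not strong uniqueness, in the absence of ellipticity---and here $\wc(\emp{J})$ degenerates whenever the particles are collinear. The paper does \emph{not} sidestep the square root; instead, it proves directly that $\mathbf X \mapsto \sqrt{\wc(\mathbf X)}$ is locally Lipschitz on all of $\R^{dJ}$, including at degenerate configurations. The trick is to write the empirical weighted covariance as a Gram-type factorization $\wc(\mathbf X)=M_{\mathbf X}M_{\mathbf X}^\t$, where $M_{\mathbf X}\in\R^{d\times J}$ has $j$-th column $\sqrt{w_j(\mathbf X)}\bigl(X^{(j)}-\wm(\mathbf X)\bigr)$, and then invoke the Araki--Yamagami inequality
\[
\bigl\lVert\sqrt{A^\t A}-\sqrt{B^\t B}\bigr\rVert_{\rm F}\le\sqrt{2}\,\lVert A-B\rVert_{\rm F}.
\]
This reduces the problem to showing that $\mathbf X\mapsto M_{\mathbf X}$ is locally Lipschitz, which follows because the weights satisfy $w_j(\mathbf X)\ge \tfrac{1}{J}\exp\bigl(-\beta\,\mathrm{osc}_{|x|\le R}f\bigr)>0$ on any ball of radius $R$, so that $\sqrt{w_j(\cdot)}$ is itself locally Lipschitz. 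This is the missing idea in your proposal.
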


In the case of CBO, \cref{thm:cbo-cbs-particles-well-posed} was already proved in \cite[Theorem 2.1]{carrillo2018analytical}.
We now present well-posedness results for the mean-field SDEs, beginning with CBO.
For~$f \in \mathcal A(1, 0, 0) \cup \mathcal A(1, 2, 2)$,
the following result generalizes~\cite[Theorems 3.1 and 3.2]{carrillo2018analytical}.
\begin{theorem}[CBO: Existence and uniqueness for the mean-field SDE]
    \label{thm:cbo-mean-field-sde-well-posed}
    Suppose that~$f \in \mathcal A(s, \ell, \ell)$~with parameters $s,\ell\ge 0$ such that $\ell\le s+1$ and fix a final time~$T>0$.
    Assume also that $\mfldis_0\in\mathcal{P}_{p}(\R^d)$ for~$p \geq 2 \vee p_{\mathcal M}(s, \ell)$,
    and fix $x_0 \sim \mfldis_0$.
    Then,
        there exists a strong solution~$\xl \colon \Omega \to  C([0,T], \R^d)$ to \eqref{eq:cbo-mf} with initial condition~$\xl_0 = x_0$ such that
        $t \mapsto \wm(\mfldis_t)$ is continuous over $[0, T]$.
        Furthermore, the process  $\xl$ is unique within the class of strong solutions to \eqref{eq:cbo-mf} such that $t \mapsto \wm(\mfldis_t)$ is continuous over $[0, T]$, and it holds that
        \begin{align}
            \label{eq:cbo-moment-bound-mfl}
            \expect \left[ \sup_{t\in[0,T]}  \myabs*{\xl_t}^p \right]
            < \infty, &  & \text{where} \qquad
            \mfldis_t \coloneq\Law(\xl_t).
        \end{align}
    Finally, the function $t \mapsto \mfldis_{t}$ belongs to $C\left([0,T], \mathcal{P}_{p}(\R^d)\right)$
    and satisfies the following non-local Fokker--Planck equation in the weak sense:
    \begin{equation}
        \label{eq:CBO:FKP}
        \partial_t \mfldis =   \nabla \cdot \Bigl( \bigl(x - \wm(\mfldis) \bigr) \mfldis \Bigr)
        + \frac{1}{2} \nabla \cdot \nabla \cdot \Bigl(D (\mfldis, x) \mfldis \Bigr),
    \end{equation}
    where $D(\mfldis, x) \coloneq \cbodifffunc \bigl(x-\wm(\mfldis)\bigr) \cbodifffunc \bigl(x-\wm(\mfldis)\bigr)^\t$.
\end{theorem}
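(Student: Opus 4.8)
The plan is to exploit the fact that the mean-field dynamics \eqref{eq:cbo-mf} couples to the law $\mfldis_t$ only through the finite-dimensional quantity $\wm(\mfldis_t) \in \real^d$, and to construct the solution as a fixed point of a map acting on continuous curves $u \colon [0,T] \to \real^d$. Concretely, given such a curve $u$, I would first solve the auxiliary (non-McKean) stochastic differential equation
\[
    \d Y^u_t = -\bigl(Y^u_t - u_t\bigr) \, \d t + \cbodifffunc\bigl(Y^u_t - u_t\bigr) \, \d \wien_t, \qquad Y^u_0 = x_0,
\]
which has a unique strong solution with almost surely continuous paths since $\cbodifffunc$ is globally Lipschitz and the drift is affine in $Y^u$; standard It\^o, Burkholder--Davis--Gundy and Gr\"onwall estimates give the moment bound $\expect\bigl[\sup_{t \le T} \myabs{Y^u_t}^p\bigr] \le C\bigl(1 + \expect\myabs{x_0}^p + \norm{u}_{\infty,T}^p\bigr)$ whenever $p \ge 2$, so that $\Law(Y^u_t) \in \mathcal P_p(\real^d)$ for every $t \in [0,T]$. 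A strong solution of \eqref{eq:cbo-mf} with $\xl_0 = x_0$ is then precisely $\xl = Y^{u^\star}$ for a fixed point $u^\star$ of the map $\mathcal T \colon u \mapsto \bigl(t \mapsto \wm(\Law(Y^u_t))\bigr)$, and the problem reduces to producing such a fixed point on $C([0,T], \real^d)$ and identifying the class in which it is unique.

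I would carry out the fixed-point argument inside a ball, which is forced by the lack of global Lipschitz continuity of $\mu \mapsto \wm(\mu)$. Combining the moment bound above with the a priori bound on the weighted mean from \cref{sub:bound_weighted_moments} --- which estimates $\myabs{\wm(\mu)}$ in terms of moments of $\mu$ of order $p_{\mathcal M}(s, \ell)$, the precise point at which the hypothesis $p \ge 2 \vee p_{\mathcal M}(s, \ell)$ is used --- one obtains, via Gr\"onwall, a radius $R$ depending only on $T$ and $\expect\myabs{x_0}^p$ such that $\mathcal T$ maps the closed ball $\mathcal B_R := \{u \in C([0,T], \real^d) : \norm{u}_{\infty, T} \le R\}$ into itself, and such that all laws $\Law(Y^u_t)$ with $u \in \mathcal B_R$ lie in a fixed set $\mathcal P_{p, R'}$. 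On that set the Wasserstein stability estimate for the weighted mean (\cref{lemma:stab:improved}) applies and yields $\myabs{\wm(\Law(Y^u_t)) - \wm(\Law(Y^v_t))} \le C \, \mathcal W_1\bigl(\Law(Y^u_t), \Law(Y^v_t)\bigr) \le C \, \expect\myabs{Y^u_t - Y^v_t}$ for $u, v \in \mathcal B_R$; a synchronous coupling of $Y^u$ and $Y^v$ (same $x_0$, same $\wien$) together with Gr\"onwall then gives $\sup_{t \le T_0} \expect\bigl[\myabs{Y^u_t - Y^v_t}^2\bigr]^{1/2} \le \kappa(T_0) \, \norm{u - v}_{\infty, T_0}$ with $\kappa(T_0) \to 0$ as $T_0 \to 0$. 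Hence $\mathcal T$ is a contraction on $\mathcal B_R$ over a sufficiently short interval $[0, T_0]$, producing a unique local solution; because the moment bounds are uniform over initial data in balls of $\mathcal P_p$, finitely many such local solutions can be concatenated to reach the arbitrary final time $T$, and the continuity built into $\mathcal B_R$ furnishes the a priori regularity of $t \mapsto \wm(\mfldis_t)$ required for the stability estimates to be applicable throughout.

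Given the fixed point $u^\star = \wm(\mfldis_\cdot)$, the process $\xl := Y^{u^\star}$ is a strong solution of \eqref{eq:cbo-mf} with $\xl_0 = x_0$, and \eqref{eq:cbo-moment-bound-mfl} is the a priori moment estimate specialized to $u = u^\star$. For the continuity of $t \mapsto \mfldis_t$ in $\mathcal P_p(\real^d)$ I would combine almost sure continuity of the paths of $\xl$ (which gives continuity of $t \mapsto \mfldis_t$ for the topology of weak convergence) with uniform integrability of $\{\myabs{\xl_t}^p : t \in [0,T]\}$ coming from \eqref{eq:cbo-moment-bound-mfl}, thereby upgrading weak convergence to convergence in $\mathcal W_p$. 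The non-local Fokker--Planck equation \eqref{eq:CBO:FKP} then follows by applying It\^o's formula to $\varphi(\xl_t)$ for $\varphi \in C_c^\infty(\real^d)$, taking expectations, and observing that the stochastic integral has zero mean because $\expect \int_0^T \norm{\cbodifffunc(\xl_t - u^\star_t)}^2 \myabs{\nabla \varphi(\xl_t)}^2 \, \d t < \infty$ by the linear growth of $\cbodifffunc$ and \eqref{eq:cbo-moment-bound-mfl}.

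Uniqueness within the stated class rests on the same synchronous-coupling-plus-\cref{lemma:stab:improved} computation: if $\xl^1$ and $\xl^2$ are two strong solutions with common initial condition $x_0$ such that $t \mapsto \wm(\mfldis^i_t)$ is continuous on $[0,T]$, then these curves are bounded on $[0,T]$, so (by the a priori estimate) both marginals stay in a common ball $\mathcal P_{p, R'}$; coupling the two solutions with the same $x_0$ and the same Brownian motion, one derives for $g(t) := \expect\bigl[\sup_{r \le t} \myabs{\xl^1_r - \xl^2_r}^2\bigr]$ an inequality $g(t) \le C \int_0^t g(r) \, \d r$, whence $g \equiv 0$ and $\xl^1 = \xl^2$ almost surely. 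I expect the main obstacle to be precisely the non-Lipschitz dependence of $\mu \mapsto \wm(\mu)$, which confines the whole argument to moment balls; keeping the iteration inside such a ball relies on the a priori bound for the weighted mean from \cref{sub:bound_weighted_moments}, and this bound is delicate --- particularly when $\ell > 0$, where $\e^{-\beta f}$ decays at infinity so that the normalization $\mu[\e^{-\beta f}]$ admits no obvious uniform lower bound over a moment ball, and a careful tail analysis is needed to recover one.
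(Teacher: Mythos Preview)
Your reduction to a fixed-point problem for the map $\mathcal T\colon u \mapsto \bigl(t \mapsto \wm(\Law(Y^u_t))\bigr)$ on $C([0,T],\real^d)$ is exactly the paper's starting point (following~\cite{carrillo2018analytical}), and the uniqueness and Fokker--Planck parts match the paper essentially verbatim. The difference lies in how existence of a fixed point is obtained. The paper applies the Leray--Schauder theorem: it shows that $\mathcal T$ is compact (via H\"older-$\frac12$ estimates and Arzel\`a--Ascoli) and that the set $\{u : u = \xi\,\mathcal T(u)\text{ for some }\xi\in[0,1]\}$ is bounded, the latter being precisely the a~priori Gr\"onwall estimate obtained from \cref{lemma:bound-on-weighted-moment}. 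This yields global existence on $[0,T]$ in one stroke. Your route is more elementary---a Banach contraction on a ball over short intervals, then concatenation---but be careful with one claim: that ``a radius $R$ depending only on $T$ and $\expect\myabs{x_0}^p$'' makes $\mathcal T$ a self-map of $\mathcal B_R$ on the \emph{full} interval $[0,T]$. For $u\in\mathcal B_R$ the moment bound gives $\lvert \mathcal T(u)_t\rvert \le C(T)(\expect\myabs{x_0}^p + R^p)^{1/p}$ with $C(T)$ containing a Gr\"onwall exponential, so self-mapping is only guaranteed for short $T_0$. Your concatenation still works, but only because the a~priori bound on an \emph{actual} solution closes without reference to $R$ (substituting $\lvert u_t\rvert^p = \lvert\wm(\mfldis_t)\rvert^p \le C\,\expect\myabs{\xl_t}^p$ back into the Gr\"onwall), which gives a global moment ceiling and hence a uniform step size; this is precisely the estimate the paper packages as the Leray--Schauder boundedness hypothesis. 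In short: your approach is correct and more hands-on, the paper's is cleaner because the topological fixed-point theorem trades the self-mapping requirement for the weaker a~priori bound, avoiding the local-to-global patching altogether.
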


\begin{theorem}[CBS: Existence and uniqueness for the mean-field SDE]
    \label{thm:cbs-mean-field-sde-well-posed}
    Suppose that~$f \in \mathcal A(s, \ell, \ell)$~with parameters $s,\ell\ge 0$ such that $\ell\le s+1$ and fix a final time~$T>0$.
    Assume also that $\mfldis_0\in\mathcal{P}_{p}(\R^d)$ for~$p \geq 2 \vee p_{\mathcal C}(s, \ell)$, that $C_{\beta}(\mfldis_0) \succ 0$,
    and fix~$x_0 \sim \mfldis_0$.
    Then, there exists a strong solution~$\xl \colon \Omega \to  C([0,T], \R^d)$ to \eqref{eq:cbs-mf} with initial condition~$\xl_0 = x_0$ such that~$t \mapsto \wm(\mfldis_t)$ and~$t \mapsto \wc(\mfldis_t)$ are continuous over $[0, T]$,
        Furthermore, the process  $\xl$ is unique within the class of strong solutions to \eqref{eq:cbs-mf} such that~$t \mapsto \wm(\mfldis_t)$ and~$t \mapsto \wc(\mfldis_t)$ are continuous over $[0, T]$, and it holds that
        \begin{align}
            \label{eq:cbs-moment-bound-mfl}
            \expect \left[ \sup_{t\in[0,T]}  \myabs*{\xl_{t}}^p \right]
            < \infty, &  & \text{where}
            \qquad \mfldis_t \coloneq\Law(\xl_t).
        \end{align}
    Finally, the function $t \mapsto \mfldis_{t}$ belongs to~$C([0,T], \mathcal{P}_{p}(\R^d))$
    and satisfies the following non-local Fokker--Planck equation in the weak sense:
    \begin{align}
        \label{eq:cbs:FKP}
        \partial_t \mfldis = \nabla \cdot \Bigl( \bigl(x-\wm(\mfldis)\bigr)\mfldis + \lambda^{-1} \wc(\mfldis) \nabla \mfldis \Bigr).
    \end{align}
\end{theorem}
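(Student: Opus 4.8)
The plan is to construct a local-in-time solution by a Banach fixed-point argument applied to a linearisation of~\eqref{eq:cbs-mf}, to derive a priori moment and covariance bounds, and then to extend the local solution to~$[0,T]$ by a continuation argument ruling out finite-time degeneracy of the weighted covariance. Fix a continuous path $t \mapsto (m_t, C_t)$ taking values in $\R^d$ and in the cone of symmetric positive-definite matrices, with $\myabs{m_t} \le R$, $C_t \preccurlyeq R\,\I_d$ and $C_t \succcurlyeq \delta\,\I_d$ on $[0,\tau]$, and consider the linear SDE $\d X_t = -(X_t - m_t)\,\d t + \sqrt{2\lambda^{-1}C_t}\,\d\wien_t$ with $X_0 = x_0$. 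Since its coefficients are bounded and time-dependent, it has an explicit strong solution with $\expect\bigl[\sup_{t \le \tau}\myabs{X_t}^p\bigr] < \infty$, the bound depending only on $R$, $\tau$ and $\expect[\myabs{x_0}^p]$; moreover $C_s \succcurlyeq \delta\,\I_d$ implies that $\mu_t := \Law(X_t)$ is the convolution of a probability measure with a Gaussian of covariance $\succcurlyeq \lambda^{-1}\delta(1-\e^{-2t})\,\I_d$. Set $\Phi(m,C)_t := \bigl(\wm(\mu_t), \wc(\mu_t)\bigr)$. Combining the moment bound with the technical estimates on weighted moments from~\cref{sub:bound_weighted_moments} shows that, for $R$ large in terms of $\mfldis_0$ and $\tau$ small, $\Phi$ preserves the bounds $\myabs{m_t} \le R$ and $C_t \preccurlyeq R\,\I_d$; the lower bound $C_t \succcurlyeq \delta\,\I_d$ is preserved for $\tau$ small because $\mu_0 = \mfldis_0$, $\wc(\mfldis_0) \succ 0$ by hypothesis, and $t \mapsto \wc(\mu_t)$ is continuous, uniformly over admissible paths. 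For two such paths the difference of the associated solutions, driven by the same Brownian motion from the same $x_0$, obeys a Grönwall estimate which, together with the Wasserstein stability estimates for the weighted mean~(\cref{lemma:stab:improved}) and the weighted covariance and the local Lipschitz continuity of $A \mapsto \sqrt A$ on $\{A \succcurlyeq \delta\,\I_d\}$, yields $\norm{\Phi(m^1,C^1) - \Phi(m^2,C^2)}_\infty \le C\sqrt\tau\,\norm{(m^1,C^1) - (m^2,C^2)}_\infty$. For $\tau$ small this is a contraction on the corresponding closed, hence complete, set of paths, so Banach's theorem gives a unique fixed point; feeding it into the linear SDE produces a strong solution of~\eqref{eq:cbs-mf} on $[0,\tau]$ with $t \mapsto \wm(\mfldis_t), \wc(\mfldis_t)$ continuous and $t \mapsto \mfldis_t \in C([0,\tau], \mathcal P_p)$.

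Next I would prove a priori bounds valid on any interval carrying such a solution. Estimate~\eqref{eq:cbs-moment-bound-mfl} follows from Itô's formula applied to $\myabs{\xl_t}^p$ and the Burkholder--Davis--Gundy inequality, using that $\myabs{\wm(\mfldis_t)}$ and $\norm{\wc(\mfldis_t)}$ are dominated by moments of $\mfldis_t$; the resulting constant depends only on $T$, $\lambda$, $\beta$, $f$ and $\expect[\myabs{x_0}^p]$, not on the interval length. Applying Itô's formula to $\xl_t \otimes \xl_t$ and taking expectations gives, for $C_t := \mathcal C(\mfldis_t)$, the matrix ODE $\dot C_t = -2C_t + 2\lambda^{-1}\wc(\mfldis_t)$, hence $\frac{\d}{\d t}\bigl(\e^{2t}C_t\bigr) = 2\lambda^{-1}\e^{2t}\wc(\mfldis_t) \succcurlyeq 0$ and $\mathcal C(\mfldis_t) \succcurlyeq \e^{-2t}\mathcal C(\mfldis_0)$. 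Since $\wc(\mfldis_0) = \mathcal C(L_\beta\mfldis_0) \succ 0$ forces $\mfldis_0$ not to be supported on any hyperplane, $\mathcal C(\mfldis_0) \succ 0$; therefore $\mathcal C(\mfldis_t) \succcurlyeq \e^{-2T}\mathcal C(\mfldis_0) \succ 0$ on $[0,T]$, so $\mfldis_t$ is never supported on a hyperplane and $\wc(\mfldis_t) = \mathcal C(L_\beta\mfldis_t) \succ 0$ for each individual $t$; by continuity of $t \mapsto \wc(\mfldis_t)$ this positivity is uniform on every compact subinterval.

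A continuation argument now finishes the existence proof. Let $[0,T^*)$ be the maximal interval to which the local solution extends with $t \mapsto \wm(\mfldis_t), \wc(\mfldis_t)$ continuous; if $T^* \le T$, the uniform moment bound makes $\{\mfldis_t\}_{t<T^*}$ Cauchy in $\mathcal P_p$ as $t \uparrow T^*$, with limit $\mfldis_{T^*}$ satisfying $\mathcal C(\mfldis_{T^*}) \succcurlyeq \e^{-2T}\mathcal C(\mfldis_0) \succ 0$ and hence $\wc(\mfldis_{T^*}) \succ 0$, so restarting the fixed-point argument from $\mfldis_{T^*}$ extends the solution strictly beyond $T^*$, a contradiction. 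Uniqueness within the stated class follows from the same Grönwall-plus-stability estimate applied on all of $[0,T]$, every such solution obeying the a priori bounds and hence a uniform lower bound on $\wc$; the weak form~\eqref{eq:cbs:FKP} and the $\mathcal P_p$-continuity of $t \mapsto \mfldis_t$ are then read off from Itô's formula tested against smooth functions.

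I expect the main obstacle to be the uniform positive-definiteness of $\wc(\mfldis_t)$ demanded by the contraction, since this is what makes $A \mapsto \sqrt A$ Lipschitz: no pointwise lower bound on $\wc(\mu)$ can be extracted from bounds on the moments of $\mu$ and on $\mathcal C(\mu)$ alone — a tight low-mass cluster near a point together with a distant, heavily penalised cluster makes $\wc(\mu)$ arbitrarily small while keeping $\mathcal C(\mu)$ and all moments of order one. The way around it, as above, is to propagate strict positivity of the \emph{unweighted} covariance through the ODE $\dot C_t = -2C_t + 2\lambda^{-1}\wc(\mfldis_t)$ and then upgrade the resulting pointwise positivity of $\wc(\mfldis_t)$ to a uniform bound on compact time intervals by continuity.
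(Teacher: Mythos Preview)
Your approach is correct but takes a genuinely different route from the paper's. The paper applies the Leray--Schauder fixed point theorem directly on $[0,T]$ to the map $(m,\Gamma) \mapsto \bigl(\wm(\rho_t), \wc(\rho_t)\bigr)$, where $\rho_t$ is the law of the solution to the linear SDE with diffusion $\sqrt{2\lambda^{-1}\mathcal R(\Gamma_t)}$ and $\mathcal R(\Gamma) := \sqrt{\Gamma\Gamma^\t}$. The point of the $\mathcal R$ device is that it makes the map well-defined for \emph{arbitrary} $\Gamma_t \in \R^{d\times d}$, so no lower bound on the covariance is needed during the existence step; compactness comes from showing the image is bounded in $C^{0,1/2}$ and invoking Arzel\`a--Ascoli, and the Leray--Schauder a priori bound follows from a Gr\"onwall argument as in \cref{sub:moment}. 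The positivity of $\wc(\mfldis_t)$ (\cref{prop:cbs-minimal-eigenvalue-wcov}) and the van Hemmen--Ando inequality are used only for uniqueness. By contrast, you run a Banach contraction on a closed set of paths with $C_t \succcurlyeq \delta\,\I_d$, obtain a local solution, and continue it using the no-collapse ODE $\dot C_t = -2C_t + 2\lambda^{-1}\wc(\mfldis_t)$; you therefore need to track the lower bound on the covariance throughout, not only at the uniqueness stage. Your route is more elementary (no topological fixed point, and the contraction gives a constructive Picard iteration), while the paper's is slicker in that it decouples existence from the covariance lower bound and works globally on $[0,T]$ in one shot. Both ultimately rely on the same stability estimates for $\wm$ and $\wc$ and on the same no-collapse mechanism for uniqueness.
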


\subsection{Key preparatory lemma}
\label{sub:key_lemma}

We first show a key preparatory lemma which is needed for the mean-field results.
This result is a simple corollary of Markov's inequality.
\begin{lemma}
    [Bound on the probability of large excursions]
    \label{lemma:small_set}
    Let $(Z_j)_{j\in\N^+}$ be a family of i.i.d.\ $\real$-valued random variables such that $\expect  \left[\myabs*{Z_1}^{r}\right]  < \infty$
    for some $r \geq 2$.
    Then for all $R > \expect \bigl[\myabs*{Z_1}\bigr]$,
    there exists a constant $C>0$ such that
    \[
        \forall J \in \N^+, \qquad
        \proba \left[  \frac{1}{J} \sum_{j=1}^{J} Z_j \geq R \right]
        \leq C J^{-\frac{r}{2}}.
    \]
\end{lemma}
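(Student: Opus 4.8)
The plan is to control the probability of the large‑deviation event $\{\frac1J\sum_{j=1}^J Z_j \ge R\}$ by first centering the summands and then applying a moment bound for sums of i.i.d.\ centered random variables at order $r$. Write $\mu := \expect[Z_1]$, and set $\delta := R - \mu > 0$ by the hypothesis $R > \expect[|Z_1|] \ge \mu$. Then
\[
    \proba\!\left[ \frac1J \sum_{j=1}^J Z_j \ge R \right]
    = \proba\!\left[ \frac1J \sum_{j=1}^J (Z_j - \mu) \ge \delta \right]
    \le \proba\!\left[ \left| \frac1J \sum_{j=1}^J (Z_j - \mu) \right| \ge \delta \right]
    = \proba\!\left[ \left| \sum_{j=1}^J (Z_j - \mu) \right| \ge \delta J \right].
\]
Now apply Markov's inequality at the power $r$:
\[
    \proba\!\left[ \left| \sum_{j=1}^J (Z_j - \mu) \right| \ge \delta J \right]
    \le \frac{1}{(\delta J)^r} \, \expect\!\left[ \left| \sum_{j=1}^J (Z_j - \mu) \right|^r \right].
\]
Observe that $\expect[|Z_1|^r] < \infty$ with $r \ge 2$ forces $\expect[|Z_1|] < \infty$, so the centering is legitimate, and $\expect[|Z_1 - \mu|^r] < \infty$ as well (for instance by the triangle inequality and $|a+b|^r \le 2^{r-1}(|a|^r+|b|^r)$).

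The crux is then a sharp bound on the $r$‑th absolute moment of a sum of $J$ i.i.d.\ centered random variables, namely the Marcinkiewicz--Zygmund / Rosenthal inequality, which gives
\[
    \expect\!\left[ \left| \sum_{j=1}^J (Z_j - \mu) \right|^r \right]
    \le C_r \, \expect\!\left[ \left( \sum_{j=1}^J |Z_j - \mu|^2 \right)^{r/2} \right]
    \le C_r' \, J^{r/2} \, \expect\bigl[ |Z_1 - \mu|^r \bigr],
\]
where the second inequality uses convexity of $t \mapsto t^{r/2}$ (valid since $r \ge 2$) applied to the average $\frac1J\sum_j |Z_j-\mu|^2$, together with the i.i.d.\ assumption. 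Substituting this back yields
\[
    \proba\!\left[ \frac1J \sum_{j=1}^J Z_j \ge R \right]
    \le \frac{C_r' \, \expect[|Z_1 - \mu|^r]}{\delta^r} \, \frac{J^{r/2}}{J^r}
    = C \, J^{-r/2},
\]
with $C := C_r'\, \delta^{-r}\, \expect[|Z_1-\mu|^r]$ independent of $J$, which is exactly the claimed bound.

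The main obstacle — if one wants a fully self‑contained argument — is justifying the Marcinkiewicz--Zygmund step. The cleanest route is simply to invoke it as a standard result; alternatively, one can recall that the range $r \ge 2$ is precisely where a symmetrization plus Khintchine's inequality argument applies, or one can cite Rosenthal's inequality directly. Note that using only $\expect[(\sum_j(Z_j-\mu))^2] = J\,\mathrm{Var}(Z_1)$ with Markov at power $2$ would give only the rate $J^{-1}$, which does not suffice once $r > 2$; it is essential to exploit the full $r$‑th moment, and the exponent $-r/2$ is the natural outcome of the central‑limit scaling $\sum_j (Z_j-\mu) \sim \sqrt J$. Everything else — the centering, the reduction to the absolute value, the application of Markov — is routine.
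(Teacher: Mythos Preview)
Your proof is correct and follows essentially the same approach as the paper: center the variables, apply Markov's inequality at power $r$, and use the Marcinkiewicz--Zygmund inequality together with convexity of $t\mapsto t^{r/2}$ to obtain the $J^{-r/2}$ rate. The paper's argument is line-for-line the same, including the convexity step to pass from $(\sum_j|Z_j-\mu|^2)^{r/2}$ to $J^{r/2}\expect[|Z_1-\mu|^r]$.
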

\begin{proof}
    This follows from a generalization of Chebychev's inequality~\cite[Exercise 3.21]{MR3674428}.
    Let
    \[
        X = \frac{1}{J} \sum_{j=1}^{J} Z_j.
    \]
    By the classical Marcinkiewicz--Zygmund inequality,
    it holds that
    \begin{align*}
        \expect \bigl\lvert X - \expect [X] \bigr\rvert^{r}
         & \leq J^{-r} C_{\rm MZ}(r) \expect \left[ \left( \sum_{j=1}^{J} \Bigl\lvert Z_j - \expect [Z_j] \Bigr\rvert^2 \right)^{\frac{r}{2}} \right] \\
         & \leq J^{-\frac{r}{2}} C_{\rm MZ}(r) \expect \left[ \frac{1}{J} \sum_{j=1}^{J} \Bigl\lvert Z_j - \expect [Z_j] \Bigr\rvert^r \right]
        = J^{-\frac{r}{2}} C_{\rm MZ}(r) \expect \left[ \Bigl\lvert Z_1 - \expect [Z_1] \Bigr\rvert^r \right],
    \end{align*}
    where we used the convexity of the function $x \mapsto x^{\frac{r}{2}}$ in the second inequality.
    Therefore,
    using the Markov inequality,
    we deduce that
    \begin{align*}
        \proba \left[ X  \ge R \right]
        \leq \proba \Bigl[ \bigl\lvert X - \expect [X]  \bigr\rvert^r  \ge \bigl(R - \expect [X]\bigr)^r \Bigr]
         & \leq \expect \left[ \frac{\bigl\lvert X - \expect[X] \bigr\rvert^r}
            {\bigl(R - \expect [X]\bigr)^r} \right]
        \leq \frac{C J^{-\frac{r}{2}}}{\bigl(R - \expect [X]\bigr)^r} ,
    \end{align*}
    which concludes the proof.
\end{proof}

\subsection{Quantitative mean-field limit for CBO}
\label{sec:mfl_cbo}

We now present and prove the mean-field results, starting with CBO.
Synchronous coupling for McKean-Vlasov diffusions with drift $b\colon\R^d \times \mathcal P(\R^d) \rightarrow  \R^d$
and diffusion $\sigma\colon \R^d \times \mathcal P(\R^d) \rightarrow  \R^{d\times d}$ assumes the following setting~\cite{ReviewChaintronI, ReviewChaintronII}.
Given i.i.d.\ Brownian motions $\bigl(\wienn{j}\bigr)_{j\in\N^+}$
and initial conditions $\bigl(\xn{j}_0\bigr)_{j\in\N^+}$ sampled i.i.d.\ from some $\mfldis_0\in\mathcal P(\R^d)$, we consider for each $J\in\N^+$ the interacting particle system
\begin{align}
    \label{eq:coupling-system-micro}
    \forall j\in \range{1}{J}, \qquad
    \xn{j}_t = \xn{j}_0 + \int_0^t b\Bigl(\xn{j}_s, \mu^J_s\Bigr) \, \d s +  \int_0^t \sigma\Bigl(\xn{j}_s, \mu^J_s\Bigr) \d W^{(j)}_s,
\end{align}
to which we couple the system of i.i.d.\ mean-field McKean-Vlasov diffusions
\begin{align}
    \label{eq:coupling-system-mfl}
    \forall N\in\N^+,\forall j\in \range{1}{J}, \qquad
    \left\{
    \begin{aligned}
        \xnl{j}_t & = \xn{j}_0 + \int_0^t b\Bigl(\xnl{j}_s, \mfldis_s\Bigr) \, \d s +  \int_0^t \sigma\Bigl(\xnl{j}_s, \mfldis_s\Bigr) \, \d W^{(j)}_s, \\
        \mfldis_t & = \operatorname{Law}\left(\xnl{j}_t\right),
    \end{aligned}
    \right.
\end{align}
driven by the same Brownian motions and initial conditions as \eqref{eq:coupling-system-micro}. In the case of CBO, the drift and diffusion coefficients for \eqref{eq:coupling-system-micro} and \eqref{eq:coupling-system-mfl} are given by
\begin{align}
    \label{eq:cbo-drift-and-diff}
    b(x, \mu) \coloneq -   \bigl( x - \wm\left(\mu \right) \bigr) &  & \text{and} &  & \sigma(x, \mu) \coloneq \cbodifffunc\bigl( x - \wm\left(\mu \right)\bigr),
\end{align}
where the diffusion operator $\cbodifffunc\colon \R^d\rightarrow \R^{d\times d}$ is, as always in this work, a globally Lipschitz continuous function.

\begin{theorem}
    [Mean-field limit for CBO]
    \label{thm:mfl-cbo}
    Suppose that $f \in \mathcal A(s, \ell, \ell)$ with $s,\ell\ge 0$ such that $\ell\le s+1$ and that $\mfldis_0\in \mathcal P(\R^d)$ has bounded moments of all orders.
    Consider the systems \eqref{eq:coupling-system-micro} and \eqref{eq:coupling-system-mfl} with the coefficients given by~\eqref{eq:cbo-drift-and-diff}.
    Then, for each $p>0$, there exists a constant $C > 0$ independent of $J$ such that
    \begin{equation}
        \label{eq:thm:mfl-cbo}
        \forall J\in\N^+, \qquad
        \forall j \in \range{1}{J}, \qquad
        \left(\expect \left[ \sup_{t\in[0,T]} \myabs[\Big]{\xn{j}_t-\xnl{j}_t}^p \right]\right)^{\frac{1}{p}}
        \le C J^{- \frac{1}{2}} .
    \end{equation}
\end{theorem}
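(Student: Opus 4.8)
The plan is to run Sznitman's synchronous coupling argument while circumventing the lack of global Lipschitz continuity of the weighted mean~$\wm$ by localization. Set $e^{(j)}_t := \xn{j}_t - \xnl{j}_t$. Subtracting~\eqref{eq:coupling-system-mfl} from~\eqref{eq:coupling-system-micro} with the CBO coefficients~\eqref{eq:cbo-drift-and-diff} gives
\[
    e^{(j)}_t = \int_0^t \Bigl( -e^{(j)}_s + \bigl(\wm(\mu^J_s) - \wm(\mfldis_s)\bigr) \Bigr) \, \d s
    + \int_0^t \Bigl( \cbodifffunc\bigl(\xn{j}_s - \wm(\mu^J_s)\bigr) - \cbodifffunc\bigl(\xnl{j}_s - \wm(\mfldis_s)\bigr) \Bigr) \, \d \wienn{j}_s ,
\]
and, since $\cbodifffunc$ is globally Lipschitz, the stochastic integrand is bounded by $C\bigl(\myabs{e^{(j)}_s} + \myabs{\wm(\mu^J_s) - \wm(\mfldis_s)}\bigr)$. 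Fixing an even integer~$p$, applying Itô's formula to $t \mapsto \myabs{e^{(j)}_t}^p$, taking the supremum over $t \le \tau$ for a stopping time~$\tau$ to be chosen, controlling the martingale contribution by the Burkholder--Davis--Gundy inequality, and using Young's inequality (the drift contribution $-p\myabs{e^{(j)}_s}^p$ has the right sign), I would arrive at
\[
    \expect\Bigl[\sup_{t \le \tau} \myabs{e^{(j)}_t}^p\Bigr]
    \le C \int_0^T \expect\Bigl[\one_{\{s < \tau\}}\myabs{e^{(j)}_s}^p\Bigr] \, \d s
    + C \int_0^T \expect\Bigl[\one_{\{s < \tau\}}\myabs{\wm(\mu^J_s) - \wm(\mfldis_s)}^p\Bigr] \, \d s .
\]
All the work lies in the second term, which I would split as $\wm(\mu^J_s) - \wm(\mfldis_s) = \bigl(\wm(\mu^J_s) - \wm(\overline\mu^J_s)\bigr) + \bigl(\wm(\overline\mu^J_s) - \wm(\mfldis_s)\bigr)$.

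For the first piece I would invoke the improved Wasserstein stability estimate~\cref{lemma:stab:improved}: coupling $\mu^J_s$ with $\overline\mu^J_s$ through the particle index produces a transport cost at most $\frac1J\sum_{k=1}^J \myabs{e^{(k)}_s}$, so on the event where the relevant empirical moments (of order $p_{\mathcal M}(s,\ell)$) do not exceed a fixed threshold~$R$, Jensen's inequality gives $\myabs{\wm(\mu^J_s) - \wm(\overline\mu^J_s)}^p \le C_R \frac1J\sum_{k=1}^J \myabs{e^{(k)}_s}^p$. This dictates the choice of~$\tau$: the first time the empirical measure $\overline\mu^J_t$ of the i.i.d.\ mean-field samples (and, if \cref{lemma:stab:improved} so requires, also $\mu^J_t$) leaves the ball $\mathcal P_{p_{\mathcal M}(s,\ell),R}$, capped at~$T$. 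For the second piece, $\overline\mu^J_s$ is the empirical measure of the i.i.d.\ samples $\xnl{1}_s,\dots,\xnl{J}_s \sim \mfldis_s$, so \cref{lemma:convergence_weighted_mean_iid} yields $\expect\bigl[\myabs{\wm(\overline\mu^J_s) - \wm(\mfldis_s)}^p\bigr] \le C J^{-p/2}$ uniformly in $s \in [0,T]$, using the moment bounds of \cref{thm:cbo-mean-field-sde-well-posed}.

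Inserting these two bounds, using that the family $\bigl(\xn{j},\xnl{j}\bigr)_{j}$ is exchangeable (so $\expect[\sup_{t\le\tau}\myabs{e^{(j)}_t}^p]$ is independent of~$j$), and applying Grönwall's lemma gives $\expect\bigl[\sup_{t\le\tau}\myabs{e^{(j)}_t}^p\bigr] \le C_R J^{-p/2}$. To remove the localization I would write $\expect[\sup_{t\le T}\myabs{e^{(j)}_t}^p] = \expect[\sup_{t\le T}\myabs{e^{(j)}_t}^p \one_{\{\tau = T\}}] + \expect[\sup_{t\le T}\myabs{e^{(j)}_t}^p \one_{\{\tau < T\}}]$: the first summand is at most $\expect[\sup_{t\le\tau}\myabs{e^{(j)}_t}^p] \le C_R J^{-p/2}$, and the second is, by Cauchy--Schwarz, at most $\bigl(\expect[\sup_{t\le T}\myabs{e^{(j)}_t}^{2p}]\bigr)^{1/2}\proba[\tau < T]^{1/2}$, where the first factor is finite and $J$-independent by the moment estimates of \cref{sub:moment} and \cref{thm:cbo-mean-field-sde-well-posed}. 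Since $\overline\mu^J_t$ is built from i.i.d.\ samples, \cref{lemma:small_set} applied to $\sup_{t\le T}\myabs{\xnl{j}_t}^q$ (together with the particle moment estimates of \cref{sub:moment}) shows $\proba[\tau < T] = \mathcal O(J^{-r})$ for $r$ as large as desired, provided $\mfldis_0$ has sufficiently many moments; taking $r \ge p$ yields $\expect[\sup_{t\le T}\myabs{e^{(j)}_t}^p] \le C J^{-p/2}$, i.e.~\eqref{eq:thm:mfl-cbo} for even~$p$. Finally, for arbitrary $p > 0$ one picks an even integer $2n \ge p$ and concludes by Jensen's inequality, $\bigl(\expect[\sup_t\myabs{e^{(j)}_t}^p]\bigr)^{1/p} \le \bigl(\expect[\sup_t\myabs{e^{(j)}_t}^{2n}]\bigr)^{1/(2n)}$; this is also where the hypothesis that $\mfldis_0$ has moments of sufficiently high order is used.

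The delicate step I expect is the estimate $\proba[\tau < T] = \mathcal O(J^{-r})$: a plain moment bound on the particle empirical measure only decays in the radius~$R$, not in~$J$, so one genuinely must exploit that the threshold is (primarily) placed on the i.i.d.\ mean-field empirical measure, for which \cref{lemma:small_set} provides an arbitrarily high algebraic rate. The second subtlety, and the reason localization is unavoidable, is that $\wm$ is only locally Lipschitz, so the stability constant~$C_R$ from \cref{lemma:stab:improved} diverges as $R \to \infty$; the argument works precisely because $R$ is fixed once and for all and the leftover error is confined to the rare event $\{\tau < T\}$. One could equivalently phrase the localization through indicator sets rather than stopping times.
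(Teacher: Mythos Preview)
Your proposal is correct and matches the paper's reasoning closely. The paper actually presents two proofs: the main one in \cref{sec:mfl_cbo} uses time-dependent indicator sets $\Omega_{J,t}$ (defined only through the i.i.d.\ mean-field particles, exactly as you anticipate in your final paragraph), while \cref{sub:stopping_times} gives an alternative via stopping times that is essentially your argument. The three key ingredients you identify---the one-sided improved stability estimate \cref{lemma:stab:improved}, the i.i.d.\ law of large numbers for $\wm$ (\cref{lemma:convergence_weighted_mean_iid}), and the concentration bound \cref{lemma:small_set} applied to $\sup_{t\le T}\lvert\xnl{j}_t\rvert^q$---are precisely those of the paper, and your observation that the threshold need only be placed on $\overline\mu^J$ (not on $\mu^J$) is the crucial trick the paper highlights.

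Two minor differences: the paper bounds the increment directly via $\lvert a+b\rvert^p\le 2^{p-1}(\lvert a\rvert^p+\lvert b\rvert^p)$ followed by BDG, rather than It\^o on $\lvert e^{(j)}\rvert^p$; and it uses H\"older with a general exponent $q$ rather than Cauchy--Schwarz for the bad-event term, which lets it state a sharper version (\cref{remark:cbo-mfl-sharper}) tracking the dependence on the number of available moments. Neither affects the conclusion when $\mfldis_0$ has moments of all orders.
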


\begin{remark}
    \label{remark:cbo-mfl-sharper}
    We prove below the following more general statement:
    Suppose that $f \in \mathcal A(s, \ell, \ell)$ with $s,\ell\ge 0$ such that $\ell\le s+1$  and that~$\mfldis_0\in \mathcal P_q(\R^d)$.
    If~$q \geq \max \bigl\{4, 2 p_{\mathcal M}(s, \ell) \bigr\}$,
    where $p_{\mathcal M}$ is given in~\eqref{eq:def_ps},
    then for all $p \in (0, \frac{q}{2}]$ there is~$C > 0$ independent of~$J$ such that
    \begin{equation}
        \label{eq:thm:mfl-cbo_sharp}
        \forall J\in\N^+, \qquad
        \forall j \in \range{1}{J}, \qquad
        \left(\expect \left[ \sup_{t\in[0,T]} \myabs[\Big]{\xn{j}_t-\xnl{j}_t}^p \right]\right)^{\frac{1}{p}}
        \le C J^{- \min \left\{ \frac{1}{2}, \frac{q-p}{2p^2}, \frac{q-(2 \vee p_{\mathcal M})}{2 (2 \vee p_{\mathcal M})^2} \right\}} .
    \end{equation}
\end{remark}

\begin{remark}
    \cref{thm:mfl-cbo} implies pathwise Wasserstein-$p$ chaos.
    More precisely, it holds that
    \begin{equation*}
        \wasserstein_p\Bigl(\rho^J_{[0,T]}, \mfldis^{\otimes J}_{[0,T]}\Bigr)
        \le C  J^{- \frac{1}{2}},
    \end{equation*}
    where  $\rho^J_{[0,T]}$ is the pathwise law of the microscopic particle system over~$[0,T]$ and $\mfldis_{[0,T]}$ is the pathwise mean-field law over~$[0,T]$.
    That is to say,
    \[
        \left\{
        \begin{aligned}
            \rho^J_{[0,T]}  & \coloneq \Law \left(\xn{1}, \dots \xn{J}\right) \in  \mathcal{P}\Bigl(C\bigl([0,T], \R^{d}\bigr)^J\Bigr), \\
            \mfldis_{[0,T]} & \coloneq \Law(\xl)\in \mathcal P \Bigl( C\bigl([0,T], \R^d\bigr)\Bigr).
        \end{aligned}
        \right.
    \]
    Here, $\wasserstein_p$ denotes the pathwise Wasserstein-$p$ distance,
    associated with the following metric on $\Bigl( C\bigl([0,T], \R^d\bigr)\Bigr)^J$:
    \[
        d(\mathbf X, \mathbf Y) = \sup_{t \in [0, T]} \frac{1}{J} \sum_{j=1}^J \left\lvert \xn{j}_t - \yn{j}_t \right\rvert,
        \qquad \mathbf X, \mathbf Y \in \Bigl( C\bigl([0,T], \R^d\bigr)\Bigr)^J.
    \]
    See~\cite[Definitions 3.1 and 3.5]{ReviewChaintronI} for more details.
\end{remark}

Before presenting the proof,
we summarize below a number of useful auxiliary results which are valid for~$f\in  \mathcal A(s, \ell, \ell)$.
These results are stated precisely and proved in~\cref{sec:aux}.
\begin{auxenum}
    \item \label{item:cbo:stab:improved}
    \textbf{Wasserstein stability estimate for the weighted mean}.
    We prove in \cref{lemma:stab:improved} that,
    for all~$R>0$ and for all $p \geq p_{\mathcal M}(s, \ell)$,
    where $p_{\mathcal M}(s, \ell) \in \real^+$ is given in~\eqref{eq:def_ps},
    there exists~$\widetilde L_{\mathcal M} = \widetilde L_{\mathcal M}(R,p)>0$ such that
    \begin{align*}
        \forall (\mu, \nu) \in \mathcal P_{p,R}\bigl(\real^d\bigr) \times \mathcal P_p\bigl(\real^d\bigr), \qquad
        \myabs*{\wm(\mu) - \wm(\nu)}
        \le \widetilde L_{\mathcal M} \, \wasserstein_p(\mu, \nu).
    \end{align*}

    \item \label{item:cbo:moment-bounds-empirical}
    \textbf{Moment estimate for the particle system}.
    We prove in~\cref{lemma:moment-bounds-empirical} that,
    if the initial condition satisfies $\mfldis_0 \in \mathcal P_q(\real^d)$,
    then there exists a constant $\kappa >0$ independent of $J$
    such that
    \[
        \forall J \in\N^+, \qquad
        \expect \left[ \sup_{t\in[0,T]} \left\lvert X_t^{(j)} \right\rvert^{q} \right]
        \quad \vee \quad
        \expect \left[ \sup_{t\in[0,T]}\left\lvert \wm(\mu^J_t) \right\rvert^{q} \right]
        \leq \kappa.
    \]

    \item
    \label{item:cbo:convergence_weighted_mean_iid}
    \textbf{Convergence of the weighted mean for i.i.d.\ samples.}
    We prove in~\cref{lemma:convergence_weighted_mean_iid} that,
    for any reals $r> p > 0$ and any $\mu \in \mathcal P_r(\R^d)$,
    there is~$C$ such that
    \[
        \expect \left\lvert
        \wm(\overline \mu^J)
        - \wm(\mu)
        \right\rvert^p
        \leq C J^{- \frac{p}{2}},
        \qquad
        \overline \mu^J \coloneq \frac{1}{J} \sum_{j=1}^{J} \delta_{\xnl{j}},
        \qquad
        \left\{ \xnl{j} \right\}_{j \in \N} \stackrel{\rm{i.i.d.}}{\sim} \mu.
    \]
\end{auxenum}

\begin{proof}
    [Proof of \cref{thm:mfl-cbo}]
    As mentioned in~\cref{remark:cbo-mfl-sharper},
    we prove the general result~\eqref{eq:thm:mfl-cbo_sharp}.
    It is sufficient to prove this statement for all~$p \in [2 \vee p_{\mathcal M}, \frac{q}{2}]$
    (by the assumption on $q$, this interval is nonempty).
    Indeed, if the statement holds true in this case,
    then by Jensen's inequality,
    it holds for all $\bar p \in (0, 2 \vee p_{\mathcal M}]$ that
    \begin{align*}
        \left(\expect \left[ \sup_{t\in[0,T]} \myabs[\Big]{\xn{j}_t-\xnl{j}_t}^{\bar p} \right]\right)^{\frac{1}{\bar p}}
         & \leq \left(\expect \left[ \sup_{t\in[0,T]} \myabs[\Big]{\xn{j}_t-\xnl{j}_t}^{2\vee p_{\mathcal M}} \right] \right)^{\frac{1}{2 \vee p_{\mathcal M}}} \\
         & \leq C J^{- \min \left\{ \frac{1}{2}, \frac{q - 2 \vee p_{\mathcal M}}{2 (2\vee p_{\mathcal M})^2} \right\}}
        = C J^{- \min \left\{ \frac{1}{2}, \frac{q - \bar p}{2 \bar p^2}, \frac{q - 2 \vee p_{\mathcal M}}{2 (2\vee p_{\mathcal M})^2} \right\}},
    \end{align*}
    where we used that the function $z \mapsto (q-z)/z^2$ is decreasing.
    Take $p \in [2 \vee p_{\mathcal M}, \frac{q}{2}]$ and fix $j\in\range{1}{J}$.
    As in Sznitman's argument \cite[Theorem 3.1]{ReviewChaintronII}, we begin by writing
    \begin{align*}
        \bigl\lvert X_{t}^{(j)} - \overline X_{t}^{(j)} \bigr\rvert^p
        \leq
         & \,2^{p-1}\left\lvert \int_{0}^{t} b\Bigl(X_{s}^{(j)}, \mu^J_{s}\Bigr) - b\Bigl(\overline X_{s}^{(j)}, \mfldis_{s} \Bigr) \, \d s \right\rvert^p               \\
         & + 2^{p-1} \left\lvert \int_{0}^{t} \sigma\Bigl(X_{s}^{(j)}, \mu^J_{s}\Bigr) - \sigma\Bigl(\overline X_{s}^{(j)}, \mfldis_{s} \Bigr) \, \d W_s \right\rvert^p.
    \end{align*}
    By the Burkholder--Davis--Gundy inequality,
    see~\cite[Theorem 7.3]{MR2380366},
    it holds for any $t \in [0, T]$ that
    \begin{align}\notag
        \expect \left[ \sup_{s \in [0, t]} \bigl\lvert X_{s}^{(j)} - \overline X_{s}^{(j)} \bigr\rvert^p \right]
         & \leq 2^{p-1} \expect \left( \int_{0}^{t} \left\lvert b\Bigl(X_{s}^{(j)}, \mu^J_{s}\Bigr) - b\Bigl(\overline X_{s}^{(j)}, \mfldis_{s} \Bigr) \right\rvert \, \d s \right)^p                                                    \\
        \notag
         & \qquad +  C_{\rm BDG} 2^{p-1} \expect \left( \int_{0}^{t} \left\lVert  \sigma\Bigl(X_{s}^{(j)}, \mu^J_{s}\Bigr) - \sigma\Bigl(\overline X_{s}^{(j)}, \mfldis_{s} \Bigr)  \right\rVert_{\rm F}^2 \, \d s \right)^{\frac{p}{2}} \\
        \notag
         & \leq (2T)^{p-1} \expect \int_{0}^{t} \left\lvert b\Bigl(X_{s}^{(j)}, \mu^J_{s}\Bigr) - b\Bigl(\overline X_{s}^{(j)}, \mfldis_{s} \Bigr) \right\rvert^p \, \d s                                                                \\
         & \qquad + \, C_{\rm BDG} 2^{p-1} T^{\frac{p}{2}-1}  \expect \int_{0}^{t} \left\lVert  \sigma\Bigl(X_{s}^{(j)}, \mu^J_{s}\Bigr) - \sigma\Bigl(\overline X_{s}^{(j)}, \mfldis_{s} \Bigr)  \right\rVert_{\rm F} ^p \, \d s.
        \label{eq:cbo-mfl-pf:BDG}
    \end{align}
    By the triangle inequality,
    the first expectation on the right-hand side can be bounded as follows:
    \begin{align}
        \notag
        \expect \int_{0}^{t} \left\lvert b\Bigl(X_{s}^{(j)}, \mu^J_{s}\Bigr) - b\Bigl(\overline X_{s}^{(j)}, \mfldis_{s} \Bigr) \right\rvert^p \, \d s
         & \leq 2^{p-1} \expect \int_{0}^{t} \left\lvert b\Bigl(X_{s}^{(j)}, \mu^J_{s}\Bigr) - b\Bigl(\overline X_{s}^{(j)}, \overline \mu^J_{s}\Bigr) \, \right\rvert^p \d s                   \\
        \label{eq:decomposition_drift_indicator}
         & \qquad + 2^{p-1} \expect  \int_{0}^{t} \left\lvert b\Bigl(\overline X_{s}^{(j)}, \overline \mu^J_{s}\Bigr) - b\Bigl(\overline X_{s}^{(j)}, \mfldis_{s}\Bigr) \right\rvert^p \, \d s.
    \end{align}

    \paragraph{Bounding the first term in~\eqref{eq:decomposition_drift_indicator}}
    From \eqref{eq:cbo-drift-and-diff} we have
    \begin{align}
        \notag
        \expect \left\lvert b\Bigl(X_{s}^{(j)}, \mu^J_{s}\Bigr) - b\Bigl(\overline X_{s}^{(j)}, \overline \mu^J_{s}\Bigr) \right\rvert^p
         & = \expect \left\lvert   \Bigl( \xnl{j}_{s} - \xn{j}_{s} + \wm\bigl(\mu^J_{s} \bigr) - \wm\bigl(\overline \mu^J_{s} \bigr) \Bigr) \right\rvert^p \\
        \label{eq:auxiliary_mfl_cbo_b}
         & \leq 2^{p-1}  \expect \left\lvert \xn{j}_{s} - \xnl{j}_{s} \right\rvert^p
        + 2^{p-1}  \expect \left\lvert \wm\bigl(\mu^J_{s} \bigr) - \wm\bigl(\overline \mu^J_{s} \bigr)  \right\rvert^p.
    \end{align}
    In order to deal with the fact that the weighted mean does not have global Lipschitz properties in the sense of McKean's framework \cite[Theorem 3.1]{ReviewChaintronII},
    we introduce for each $t\in [0,T]$ and $J\in\N^+$ the set
    \begin{equation}
        \label{eq:cbo-bad-set}
        \Omega_{J,t} \coloneq \left\{ \omega \in \Omega : \frac{1}{J} \sum_{j=1}^{J} \myabs*{\xnl{j}_t(\omega)}^p \geq R \right\},
    \end{equation}
    where
    \[
        Z_j=\sup_{t\in[0,T]}\myabs*{\xnl{j}_t}^p,
    \]
    and $R > \expect [Z_j]$ is fixed. By \cref{thm:cbo-mean-field-sde-well-posed} it holds that
    \(
    \expect \left[ \lvert Z_j \rvert^{q/p} \right] < \infty.
    \)
    Note that $q\ge 2p$. Therefore, by \cref{lemma:small_set}
    there exists a constant~$C>0$ such that
    \begin{align*}
        \forall t \in [0, T],
        \qquad \forall J \in \N^+, \qquad
        \proba \left[  \Omega_{J,t} \right]
        \le \proba \left[ \frac{1}{J} \sum_{j=1}^{J} Z_j \geq R \right]
        \le  C J^{-\frac{q}{2p}}.
    \end{align*}
    We split the second term on the right-hand of \eqref{eq:auxiliary_mfl_cbo_b} as follows:
    \begin{align}
        \notag
        \expect \left\lvert \wm\bigl(\mu^J_{s} \bigr) - \wm\bigl(\overline \mu^J_{s} \bigr)  \right\rvert^p
         & = \expect \left[ \left\lvert \wm\bigl(\mu^J_{s} \bigr) - \wm\bigl(\overline \mu^J_{s} \bigr)  \right\rvert^p \mathsf 1_{\Omega \setminus \Omega_{J,s}}\right] \\
         & \qquad + \expect \left[ \left\lvert \wm\bigl(\mu^J_{s} \bigr) - \wm\bigl(\overline \mu^J_{s} \bigr)  \right\rvert^p \mathsf 1_{\Omega_{J,s}}\right].
        \label{eq:three_terms}
    \end{align}
    Let us now bound each term on the right-hand side of \eqref{eq:three_terms} separately.
    \begin{itemize}
        \item
              Since the $p$-th moment of $\overline \mu_{s}^J$ is bounded from above by $R$ for all $\omega \in \Omega\setminus\Omega_{J,s}$,
              we can use the stability estimate on the weighted mean given (see \cref{item:cbo:stab:improved} and \cref{lemma:stab:improved}) to obtain
              \[
                  \expect \left[ \left\lvert \wm\bigl(\mu^J_{s} \bigr) - \wm\bigl(\overline \mu^J_{s} \bigr)  \right\rvert^p \mathsf 1_{\Omega \setminus \Omega_{J,s}}\right]
                  \leq C \expect \left[ \wasserstein_p\Bigl(\mu^J_{s}, \overline \mu^J_{s}\Bigr)^p \right]
              \]
              for the first term in \eqref{eq:three_terms}.
              Since in addition
              \[
                  \expect \left[ \wasserstein_p\Bigl(\mu^J_{s}, \overline \mu^J_{s}\Bigr)^p \right]
                  \leq \expect \left[ \frac{1}{J} \sum_{j=1}^{J} \left\lvert X_{s}^{(j)} - \xnl{j}_{s}  \right\rvert^p \right]
                  = \expect \left\lvert \xn{j}_s - \xnl{j}_s  \right\rvert^p,
              \]
              it follows that
              \[
                  \expect \left[ \left\lvert \wm\bigl(\mu^J_{s} \bigr) - \wm\bigl(\overline \mu^J_{s} \bigr)  \right\rvert^p \mathsf 1_{\Omega \setminus \Omega_{J,s}}\right]
                  \leq C \expect  \left\lvert \xn{j}_s - \xnl{j}_s  \right\rvert^p.
              \]

        \item
              We now bound the second term in~\eqref{eq:three_terms}. From \cref{item:cbo:moment-bounds-empirical} (see \cref{lemma:moment-bounds-empirical}) as well as \eqref{eq:cbo-moment-bound-mfl} and \cref{lemma:bound-on-weighted-moment} we have
              \[
                  \expect \left[ \left\lvert \wm\bigl(\mu^J_{s} \bigr) - \wm\bigl(\overline \mu^J_{s} \bigr)  \right\rvert^{q}\right]
                  \leq
                  2^{q-1} \left( \expect  \left\lvert \wm\bigl(\mu^J_{s} \bigr) \right\rvert^q   + \expect  \left\lvert \wm\bigl(\overline \mu^J_{s} \bigr)  \right\rvert^{q} \right)
                  \leq 2^{q-1} \left( \kappa + \overline \kappa\right)
              \]
              where  $\overline \kappa:= C_2 \expect \left[ \sup_{t\in[0,T]}  \myabs*{\xl_{t}}^q \right]
              < \infty$. Here, $C_2>0$ is the constant from \cref{lemma:bound-on-weighted-moment}.
              Therefore,
              we obtain by Hölder's inequality and \cref{lemma:small_set} that
              \begin{align*}
                  \expect \left[ \left\lvert \wm\bigl(\mu^J_{s} \bigr) - \wm\bigl(\overline \mu^J_{s} \bigr)  \right\rvert^p \mathsf 1_{\Omega_{J,s}}\right]
                  \le
                  \left(\expect \left[ \left\lvert \wm\bigl(\mu^J_{s} \bigr) - \wm\bigl(\overline \mu^J_{s} \bigr) \right\rvert^{q}\right]\right)^{\frac{p}{q}} \left(\proba \left[ \Omega_{J,s} \right] \right)^{^{\frac{q-p}{q}}}
                  \le  2^{\frac{(q-1)p}{q}}
                  \left( \kappa + \overline \kappa\right)^{\frac{p}{q}} J^{-\frac{q-p}{2p}}.
              \end{align*}
    \end{itemize}
    We have thus shown that
    \[
        \expect \left\lvert \wm\bigl(\mu^J_{s} \bigr) - \wm\bigl(\overline \mu^J_{s} \bigr)  \right\rvert^p
        \leq C\expect \left\lvert \xn{j}_s - \xnl{j}_s \right\rvert^p + C J^{-\frac{q -p}{2p}},
    \]
    which in turn by~\eqref{eq:auxiliary_mfl_cbo_b} implies that
    \[
        \expect \left\lvert b\Bigl(X_{s}^{(j)}, \mu^J_{s}\Bigr) - b\Bigl(\xnl{j}_{s}, \overline \mu^J_{s}\Bigr) \right\rvert^p
        \leq C\expect \left\lvert \xn{j}_s - \xnl{j}_s  \right\rvert^p + C J^{-\frac{q -p}{2p}}.
    \]

    \paragraph{Bounding the second term in~\eqref{eq:decomposition_drift_indicator}}
    For the second term on the right-hand side of~\eqref{eq:decomposition_drift_indicator},
    we have by \cref{item:cbo:convergence_weighted_mean_iid} (see \cref{lemma:convergence_weighted_mean_iid}) that
    \[
        \expect \left\lvert b\Bigl(\overline X_{s}^{(j)}, \overline \mu^J_{s}\Bigr) - b\Bigl(\overline X_{s}^{(j)}, \mfldis_{s}\Bigr) \right\rvert^p
        =  \expect \left\lvert \wm \bigl(\overline \mu^J_{s}\bigr) - \wm \bigl(\mfldis_s \bigr) \right\rvert^p
        \leq C J^{-\frac{p}{2}}.
    \]

    \paragraph{Conclusion}
    Combining the bounds on both terms in~\eqref{eq:decomposition_drift_indicator}, we deduce
    \[
        \expect \left[ \int_{0}^{t} \left\lvert b\Bigl(X_{s}^{(j)}, \mu^J_{s}\Bigr) - b\Bigl(\overline X_{s}^{(j)}, \mfldis_{s} \Bigr) \right\rvert^p \, \d s \right]
        \leq C J^{- \min \left\{ \frac{p}{2}, \frac{q -p}{2p} \right\} } + C \int_{0}^t \expect \left[ \sup_{u \in [0,s]} \left\lvert \xn{j}_u - \xnl{j}_u \right\rvert^p \right] \d s.
    \]
    Using that $S\colon \R\rightarrow \R^{d\times d}$ is globally Lipschitz continuous, we obtain analogously
    \[
        \expect \left[ \int_{0}^{t} \left\lVert \sigma \Bigl(X_{s}^{(j)}, \mu^J_{s}\Bigr) - \sigma\Bigl(\overline X_{s}^{(j)}, \mfldis_{s} \Bigr) \right\rVert^p \, \d s \right]
        \leq C J^{- \min \left\{ \frac{p}{2}, \frac{q -p}{2p} \right\} } + C \int_{0}^t \expect \left[ \sup_{u \in [0,s]} \left\lvert \xn{j}_u - \xnl{j}_u \right\rvert^p \right] \d s.
    \]
    Substituting in~\eqref{eq:cbo-mfl-pf:BDG},
    we finally obtain
    \begin{align*}
        \expect \left[ \sup_{s \in [0, t]} \bigl\lvert X_{s}^{(j)} - \overline X_{s}^{(j)} \bigr\rvert^p \right]
         & \leq C J^{- \min \left\{ \frac{p}{2}, \frac{q -p}{2p} \right\} } + C \int_{0}^t \expect \left[ \sup_{u \in [0, s]} \left\lvert \xn{j}_u - \xnl{j}_u \right\rvert^p \right] \d s.
    \end{align*}
    The conclusion follows from Grönwall's lemma,
    since $\frac{q-p}{2p^2} \leq \frac{q-(2 \vee p_{\mathcal M})}{2(2 \vee p_{\mathcal M})^2}$
    for the value of $p$ considered.
\end{proof}

\begin{remark}
    The proof is similar to that of McKean's theorem \cite[Theorem 3.1]{ReviewChaintronII}, with an important difference:
    we eliminate a set of small probability in order to apply the local stability estimate in  \cref{item:cbo:stab:improved} on its complement.

    A crucial detail here is that \cref{item:cbo:stab:improved} only assumes that $(\mu, \nu) \in \mathcal P_{p,R} \times \mathcal P_{p}$ and not~$(\mu, \nu) \in\mathcal P_{p,R} \times \mathcal P_{p,R}$;
    see \cref{lemma:stab:improved} below.
    This allows us to define $\Omega_{J,t}$ in \eqref{eq:cbo-bad-set} only in terms of the i.i.d.\ mean-field particles~$\xnl{j}_t$ for which \cref{lemma:small_set} can be applied;
    the event $\frac{1}{J} \sum_{j=1}^{J} \myabs*{\xn{j}_t}^p \geq R$ does not need to be included in the definition of $\Omega_{J,t}$.
    This trick allows to show the strong mean-field limit result~\eqref{eq:thm:mfl-cbo_sharp}, improving on the previous results from \cite{CBO-mfl-Huang2021, CBO-hypersurface-Fornasier_2020, fornasier2021consensusbased}.
\end{remark}

\subsection{Quantitative mean-field limit for CBS}
\label{sec:mfl_cbs}

Proving a mean-field limit for the CBS dynamics~\eqref{eq:cbs-particles} is more challenging than for CBO because of the square root of the weighted covariance in the diffusion coefficient.
Thanks to our novel stability estimate for the square root of the weighted covariance (see \cref{lemma:stab:improved} below), we can prove a mean-field result for the CBS dynamics~\eqref{eq:cbs-particles} in the same way as for CBO.

\begin{theorem}
    [Mean-field limit for CBS]
    \label{thm:mfl-cbs}
    Suppose that $f \in \mathcal A(s, \ell, \ell)$ with $s,\ell\ge 0$ such that $\ell\le s+1$ and that $\mfldis_0\in \mathcal P(\R^d)$ with~$\mathcal C(\mfldis_0) \succ 0$ has moments of all orders.
    Consider the systems \eqref{eq:coupling-system-micro} and \eqref{eq:coupling-system-mfl} with the coefficients given by
    \begin{align}
        \label{eq:cbs-drift-and-diff}
        b(x, \mu) \coloneq - \bigl(x - \wm\left(\mu \right) \bigr) &  & \text{and} &  &
        \sigma(x, \mu) \coloneq \sqrt{2 \lambda^{-1} \wc(\mu)}.
    \end{align}
    Then for all $p >0$, there is $C > 0$ independent of~$J$ such that
    \begin{equation}
        \label{eq:thm:mfl-cbs}
        \forall J\in\N^+, \qquad
        \forall j\in\range{1}{J},\qquad
        \left(\expect \left[ \sup_{t\in[0,T]} \left\lvert \xn{j}_t-\xnl{j}_t \right\rvert^p \right]\right)^{\frac{1}{p}}
        \le C J^{-\frac{1}{2}}.
    \end{equation}
\end{theorem}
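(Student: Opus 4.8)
The plan is to follow the proof of \cref{thm:mfl-cbo} almost line by line, isolating the one genuinely new ingredient, namely the diffusion coefficient, which for CBS does not depend on the spatial variable but depends on the empirical measure through the matrix square root of the weighted covariance. First I would reduce, by the same Jensen's inequality argument as in the CBO case, to proving the estimate for exponents $p \in [2 \vee p_{\mathcal C}(s, \ell), q/2]$, where $q$ is the order of the moment assumed on $\mfldis_0$ and $p_{\mathcal C}$ is the quantity in~\eqref{eq:def_ps}; the phrase ``moments of sufficiently high order'' is to be read as $q \geq \max\{4, 2 p_{\mathcal C}(s, \ell)\}$, so that this interval is nonempty, and one obtains in fact the sharper rate $J^{-\min\{1/2,\, (q-p)/(2p^2),\, (q - (2 \vee p_{\mathcal C}))/(2(2 \vee p_{\mathcal C})^2)\}}$, which collapses to $J^{-1/2}$ once $q$ is large enough. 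Fixing $j \in \range{1}{J}$ (well-posedness of both systems being guaranteed by \cref{thm:cbo-cbs-particles-well-posed,thm:cbs-mean-field-sde-well-posed}) and applying the Burkholder--Davis--Gundy inequality as in~\eqref{eq:cbo-mfl-pf:BDG}, the proof reduces to estimating $\expect \int_0^t \myabs{b(\xn{j}_s, \mu^J_s) - b(\xnl{j}_s, \mfldis_s)}^p \, \d s$ and $\expect \int_0^t \norm{\sigma(\xn{j}_s, \mu^J_s) - \sigma(\xnl{j}_s, \mfldis_s)}_{\rm F}^p \, \d s$, with $b$ and $\sigma$ as in~\eqref{eq:cbs-drift-and-diff}.

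Since the drift in~\eqref{eq:cbs-drift-and-diff} coincides with the CBO drift, the bound on the first integral is obtained verbatim from the corresponding steps of the proof of \cref{thm:mfl-cbo}: one decomposes it through $\overline \mu^J_s$, introduces the bad event $\Omega_{J,s} = \{ \frac1J \sum_{i=1}^J \myabs{\xnl{i}_s}^p \geq R\}$ defined only in terms of the i.i.d.\ mean-field particles, bounds $\proba[\Omega_{J,s}] \le C J^{-q/(2p)}$ via \cref{lemma:small_set} (with $Z_j = \sup_{t \in [0,T]} \myabs{\xnl{j}_t}^p$, which has finite $(q/p)$-th moment by~\eqref{eq:cbs-moment-bound-mfl}), applies on $\Omega \setminus \Omega_{J,s}$ the Wasserstein stability estimate for $\wm$ from \cref{lemma:stab:improved} and on $\Omega_{J,s}$ a Hölder estimate together with \cref{lemma:moment-bounds-empirical,lemma:bound-on-weighted-moment}, and estimates the $\overline \mu^J_s$-versus-$\mfldis_s$ term by $C J^{-p/2}$ using \cref{lemma:convergence_weighted_mean_iid}. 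This yields $\expect \int_0^t \myabs{b(\xn{j}_s, \mu^J_s) - b(\xnl{j}_s, \mfldis_s)}^p \, \d s \leq C J^{-\min\{p/2,\, (q-p)/(2p)\}} + C \int_0^t \expect[\sup_{u \in [0,s]} \myabs{\xn{j}_u - \xnl{j}_u}^p] \, \d s$.

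For the diffusion integral, the key simplification is that $\sigma$ is independent of $x$, so the spatial contribution present in the CBO proof disappears and only a stability-in-$\mu$ estimate is needed. Writing $\sigma(\xn{j}_s, \mu^J_s) - \sigma(\xnl{j}_s, \mfldis_s) = \sigma(\mu^J_s) - \sigma(\overline \mu^J_s) + \sigma(\overline \mu^J_s) - \sigma(\mfldis_s)$ with $\sigma(\mu) = \sqrt{2\lambda^{-1}\wc(\mu)}$, the second difference is controlled by the i.i.d.\ convergence estimate for the square root of the weighted covariance (the covariance counterpart of \cref{lemma:convergence_weighted_mean_iid}, established in the same manner), giving $\expect \norm{\sigma(\overline \mu^J_s) - \sigma(\mfldis_s)}_{\rm F}^p \leq C J^{-p/2}$. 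For the first difference I would reuse the very same event $\Omega_{J,s}$: on $\Omega \setminus \Omega_{J,s}$ the $p$-th moment of $\overline \mu^J_s$ is at most $R$, so the novel Wasserstein stability estimate for $\mu \mapsto \sqrt{\wc(\mu)}$ in \cref{lemma:stab:improved} applies and gives $\norm{\sigma(\mu^J_s) - \sigma(\overline \mu^J_s)}_{\rm F} \leq \widetilde L_{\mathcal C}\, W_p(\mu^J_s, \overline \mu^J_s)$, after which $W_p(\mu^J_s, \overline \mu^J_s)^p \le \frac1J \sum_{i=1}^J \myabs{\xn{i}_s - \xnl{i}_s}^p$ leads to a contribution $\le C \expect \myabs{\xn{j}_s - \xnl{j}_s}^p$; on $\Omega_{J,s}$ one uses Hölder's inequality with the uniform bound $\expect \norm{\sigma(\mu^J_s) - \sigma(\overline \mu^J_s)}_{\rm F}^q \leq C\bigl(\expect \norm{\wc(\mu^J_s)}^{q/2} + \expect \norm{\wc(\overline \mu^J_s)}^{q/2}\bigr) \le C$ --- which follows from $\norm{\sqrt A}_{\rm F}^2 \le d\,\norm{A}$, \cref{lemma:moment-bounds-empirical}, \eqref{eq:cbs-moment-bound-mfl} and \cref{lemma:bound-on-weighted-moment} --- together with $\proba[\Omega_{J,s}] \le C J^{-q/(2p)}$, to obtain a contribution $\le C J^{-(q-p)/(2p)}$. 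Substituting the two integral bounds into the BDG inequality and applying Grönwall's lemma then closes the argument.

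The main obstacle, and the only place where CBS is genuinely harder than CBO, is the need for a \emph{Lipschitz} (not merely Hölder-$\tfrac12$) Wasserstein stability estimate for $\mu \mapsto \sqrt{\wc(\mu)}$ restricted to $\mathcal P_{p,R}$: the symmetric matrix square root is only $\tfrac12$-Hölder on the cone of positive semidefinite matrices, so without extra structure the diffusion term would only be bounded by $W_p(\mu^J_s,\overline\mu^J_s)^{p/2}$, which is too weak for Grönwall. The gain must come from the interplay between the bounded $p$-th moment (with $p \geq p_{\mathcal C}(s,\ell)$, which is why the threshold is $s+3$ rather than $s+2$ when $\ell = 0$) and, when $\ell > 0$, the coercivity of $f$ entering the reweighting by $\e^{-\beta f}$, which suppresses the far-field mass that could otherwise make $\wc$ large while keeping the Wasserstein distance small. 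This is exactly the content of \cref{lemma:stab:improved}, and together with its i.i.d.-convergence counterpart for the weighted covariance it makes the CBS diffusion term behave, for the purposes of the coupling argument, just like the CBO diffusion term.
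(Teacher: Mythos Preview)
Your approach is essentially the one the paper takes (the paper in fact says ``we can prove~\cref{thm:mfl-cbs} in the same way as for CBO, so we skip the proof here''), and your decomposition of the diffusion term is exactly right. Two technical corrections, however, are worth flagging.

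First, the threshold you use is off: the improved stability estimate for $\mu \mapsto \sqrt{\wc(\mu)}$ in \cref{lemma:stab:improved} requires $p \geq 2p_{\mathcal M}(s,\ell)$, not $p \geq p_{\mathcal C}(s,\ell)$. The quantity $p_{\mathcal C}$ is the threshold for $\wc$ itself (\cref{lemma:stab:wmean-wcov-basic}\cref{item:lem-stab-wcov}), whereas the square-root version (\cref{proposition:stability:sqrt-wcov} and \cref{lemma:stab:improved}\cref{item:lem-stab-wcov-sqrt}) needs the larger exponent $2p_{\mathcal M}$; when $\ell=0$ this is $2(s+2)$, not $s+3$. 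Correspondingly, the correct moment assumption is $q \geq 4p_{\mathcal M}(s,\ell)$, and the sharper rate reads $J^{-\min\{1/2,\,(q-p)/(2p^2),\,(q-2p_{\mathcal M})/(2(2p_{\mathcal M})^2)\}}$, as stated in \cref{remark:cbs-mfl-sharper}.

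Second, and more substantively, you omit one ingredient that the paper explicitly lists: the bound $\expect\lVert \sqrt{\wc(\overline\mu^J_s)} - \sqrt{\wc(\mfldis_s)}\rVert_{\rm F}^p \leq C J^{-p/2}$ is \emph{not} established ``in the same manner'' as \cref{lemma:convergence_weighted_mean_iid}. Its proof (\cref{lemma:convergence_weighted_covariance_iid}) goes through the van~Hemmen--Ando inequality and therefore requires $\wc(\mfldis_s) \succcurlyeq \eta\, \I_d$ for some $\eta>0$ uniform in $s\in[0,T]$. This is precisely where the hypothesis $\mathcal C(\mfldis_0)\succ 0$ enters, via the no-collapse result \cref{prop:cbs-minimal-eigenvalue-wcov}, which guarantees $\inf_{t\in[0,T]}\lambda_{\min}(\wc(\mfldis_t))>0$. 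Without this lower bound the square root would indeed only be $\tfrac12$-H\"older and the i.i.d.\ term would not close. Your final paragraph correctly locates the difficulty for the $\mu^J$-versus-$\overline\mu^J$ term (handled by \cref{lemma:stab:improved} via the Araki--Yamagami trick), but the positivity of $\wc(\mfldis_t)$ is a separate, indispensable input for the $\overline\mu^J$-versus-$\mfldis$ term.
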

\begin{remark}
    \label{remark:cbs-mfl-sharper}
    In the same way as for CBO, it is possible to prove the following more general statement.
    Suppose that $f \in \mathcal A(s, \ell, \ell)$ with $s,\ell\ge 0$ such that $\ell\le s+1$. Assume that $\mfldis_0\in \mathcal P_q(\R^d)$ with $\mathcal C(\mfldis_0) \succ 0$.
    If~$q \geq 4 p_{\mathcal M}(s, \ell)$,
    then for all~$p \in (0, \frac{q}{2}]$, there is $C > 0$ independent of~$J$ such that
    \begin{equation}
        \label{eq:thm:mfl-cbs_sharp}
        \forall J\in\N^+, \qquad
        \forall j \in \range{1}{J}, \qquad
        \left(\expect \left[ \sup_{t\in[0,T]} \myabs[\Big]{\xn{j}_t-\xnl{j}_t}^p \right]\right)^{\frac{1}{p}}
        \le C J^{-  \min \left\{ \frac{1}{2}, \frac{q-p}{2p^2}, \frac{q-2p_{\mathcal M}}{2 (2p_{\mathcal M})^2} \right\}}.
    \end{equation}
\end{remark}

We summarize below the additional auxiliary results necessary for the proof,
which are valid for an objective function~$f\in  \mathcal A(s, \ell, \ell)$.
These results are stated precisely and proved in~\cref{sec:aux}.
\begin{itemize}
    \item
          \textbf{Wasserstein stability estimate for the weighted covariance}.
          The crucial ingredient for the proof of~\cref{thm:mfl-cbs} is a stability estimate for the square root of the weighted covariance.
          We prove in \cref{lemma:stab:improved} that,
          for all~$R>0$ and for all $p \geq  2 p_{\mathcal M}(\ell, s)$,
          there exists $\widetilde L_{\sqrt{\mathcal C}}(R,p)>0$ such that
          \begin{align*}
              \forall (\mu, \nu) \in \mathcal P_{p,R} \bigl(\real^d\bigr) \times \mathcal P_p \bigl(\real^d\bigr), \qquad
              \left\lVert \sqrt{\wc(\mu)} - \sqrt{\wc(\nu)} \right\rVert
              \le \widetilde L_{\sqrt{\mathcal C}} \, \wasserstein_p(\mu, \nu).
          \end{align*}

    \item
          \textbf{Moment estimate for the particle system}.
          We prove in~\cref{lemma:moment-bounds-empirical} that,
          under the assumptions of~\cref{thm:mfl-cbs},
          there exists a constant $\kappa >0$ independent of $J$
          such that for all $J \in\N^+, \qquad$
          \[
              \expect \left[ \sup_{t\in[0,T]} \left\lvert X_t^{(j)} \right\rvert^{q} \right]
              \quad \vee \quad
              \expect \left[ \sup_{t\in[0,T]}\left\lvert \wm(\mu^J_t) \right\rvert^{q} \right]
              \quad \vee \quad
              \expect \left[ \sup_{t\in[0,T]}\left\lvert \sqrt{\wc(\mu^J_t)} \right\rvert^{q} \right]
              \leq \kappa.
          \]

    \item \textbf{Convergence of the square root of the weighted covariance for i.i.d.\ samples.}
          We prove in~\cref{lemma:convergence_weighted_covariance_iid} that,
          for any reals $r> 2p > 0$ and any $\mu \in \mathcal P_r(\R^d)$ such that~$\mathcal C_{\beta}(\mu) \succ 0$,
          then there is~$C$ such that
          \[
              \expect
              \left\lVert
              \sqrt{\wc(\overline \mu^J)} - \sqrt{\wc(\mu)}
              \right\rVert^p
              \leq C J^{- \frac{p}{2}},
              \qquad
              \overline \mu^J \coloneq \frac{1}{J} \sum_{j=1}^{J} \delta_{\xnl{j}},
              \qquad
              \left\{ \xnl{j} \right\}_{j \in \N} \stackrel{\rm{i.i.d.}}{\sim} \mu.
          \]
    \item \textbf{No collapse in finite time.}
          In order to apply the third item with~$\mu = \mfldis_t$,
          we prove in \cref{prop:cbs-minimal-eigenvalue-wcov} that the
          assumption~$\mathcal C(\mfldis_0)\succ 0$ implies that
          \[
              \eta \coloneq \inf_{t \in[0,T]}\lambda_\text{min}\bigl(\wc(\mfldis_t) \bigr) >0.
          \]
\end{itemize}

With these ingredients at hand, we can prove~\cref{thm:mfl-cbs} in the same way as for CBO,
so we skip the proof here.
Let us mention, however,
that a different proof based on a technique using stopping times is presented in~\cref{sub:stopping_times}.

\subsection{Numerical experiment}%
\label{sub:experiment}
In this section,
we present a numerical experiment illustrating our first main result,
on the quantitative mean field limit for CBO; see~\cref{thm:mfl-cbo}.
To this end, we apply CBO to minimize the Ackley function,
a standard benchmark function in optimization given by
\begin{equation}
    \label{eq:ackley}
    f_{\rm Ack}(x) = -20 \exp \left( - \frac{1}{5} \sqrt{\frac{1}{d} \sum_{i=1}^{d} |x_i|^2} \right)
    -\exp \left( \frac{1}{d} \sum_{i=1}^{d} \cos\bigl(2 \pi x_i\bigr) \right) + \e \,+\, 20,
\end{equation}
where $d$ is the dimension.
This function, which is illustrated in~\cref{fig:evolution_errors}, is globally Lipschitz continuous and bounded from above and below uniformly over~$\real^d$,
and so $f_{\rm Ack} \in \mathcal A(0, 0, 0)$.
To illustrate the theorem,
we first approximate the mean field dynamics by simulating the interacting particle system~\eqref{eq:cbo-particles} using a large number~$J_{\infty}$ of particles.
Then, for various smaller values of $J \ll J_{\infty}$,
we simulate the CBO evolution of a system of~$J$ particles,
initialized at the same positions and driven by the same Brownian motions as the first~$J$ particles of the large system of size~$J_{\infty}$.
In all cases, we use CBO with isotropic noise, discretized using the Euler--Maruyama method with time step~$\Delta t$.
All the parameters used in this experiment are given in~\cref{table:training_params}.

To exhibit the scaling as~$J^{-\frac{1}{2}}$ in~\cref{thm:mfl-cbo},
we fix $p = 2$ and approximate the left-hand side of~\eqref{eq:thm:mfl-cbo}
using a Monte Carlo approach with $M$ independent realizations.
More precisely,
omitting discretization for simplicity,
we use the following estimator~$\widehat E(J)$ of~$E(J)$, where
\[
    E(J) \coloneq \expect \left[ \sup_{t\in[0,T]} \myabs[\Big]{\xn{j}_t-\xnl{j}_t}^2 \right],
    \qquad
    \widehat E(J) \coloneq \frac{1}{MJ} \sum_{m=1}^{M} \sum_{j=1}^{J} \sup_{t\in[0,T]} \myabs[\Big]{\xn{j/J,m}_t-\xn{j/J_{\infty},m}_t}^2.
\]
Here $\xn{j/J,m}_t$ denotes the position of the $j$-th particle in the system of size~$J$,
for the $m$-th independent realization of the particle systems.

\begin{table}[htb]
    \footnotesize
    \begin{center}
        \begin{tabular}{ccc}
            \toprule
            \textbf{Parameter}                                       & \textbf{Notation} & \textbf{Value}                                \\
            \midrule
            \phantom{\LARGE$\int$} Dimension                         & $d$               & 2                                             \\
            \phantom{\LARGE$\int$} Inverse temperature           & $\beta$           & $3$                                             \\
            \phantom{\LARGE$\int$} Diffusion coefficient             & $\sigma$          & $\frac{1}{5}$                                             \\
            \phantom{\LARGE$\int$} Initial law                       & $\mfldis_0$       & $\normal (0, \I_2)$                            \\
            \phantom{\LARGE$\int$} Time step                         & $\Delta t$        & 0.01                                          \\
            \phantom{\LARGE$\int$} Final time                        & $T$               & 1 \\
            \phantom{\LARGE$\int$} Size for mean field approximation & $J_{\infty}$      & $10^6$                                          \\
            \phantom{\LARGE$\int$} Number of independent simulations & $M$               & $100$                                           \\
            \bottomrule
        \end{tabular}
        \caption{Parameters of the numerical experiment described in~\cref{sub:experiment}.}
        \label{table:training_params}
    \end{center}
\end{table}

The results of the numerical experiment are illustrated in the right panel~\cref{fig:evolution_errors}.
The expected scaling~$E(J) \propto \frac{1}{J}$ is clearly observed for large values of~$J$.
We note, however, that this scaling of the error holds only for relatively large~$J$.
For small~$J$, the effective number of particles contributing to the weighted average,
as quantified by the effective sample size~\cite{MR3696003},
is close to 1 in the early stages of the simulation.
Therefore, the Monte Carlo scaling is not expected in this regime.

\begin{figure}[ht]
    \centering
    \includegraphics[width=0.4\linewidth]{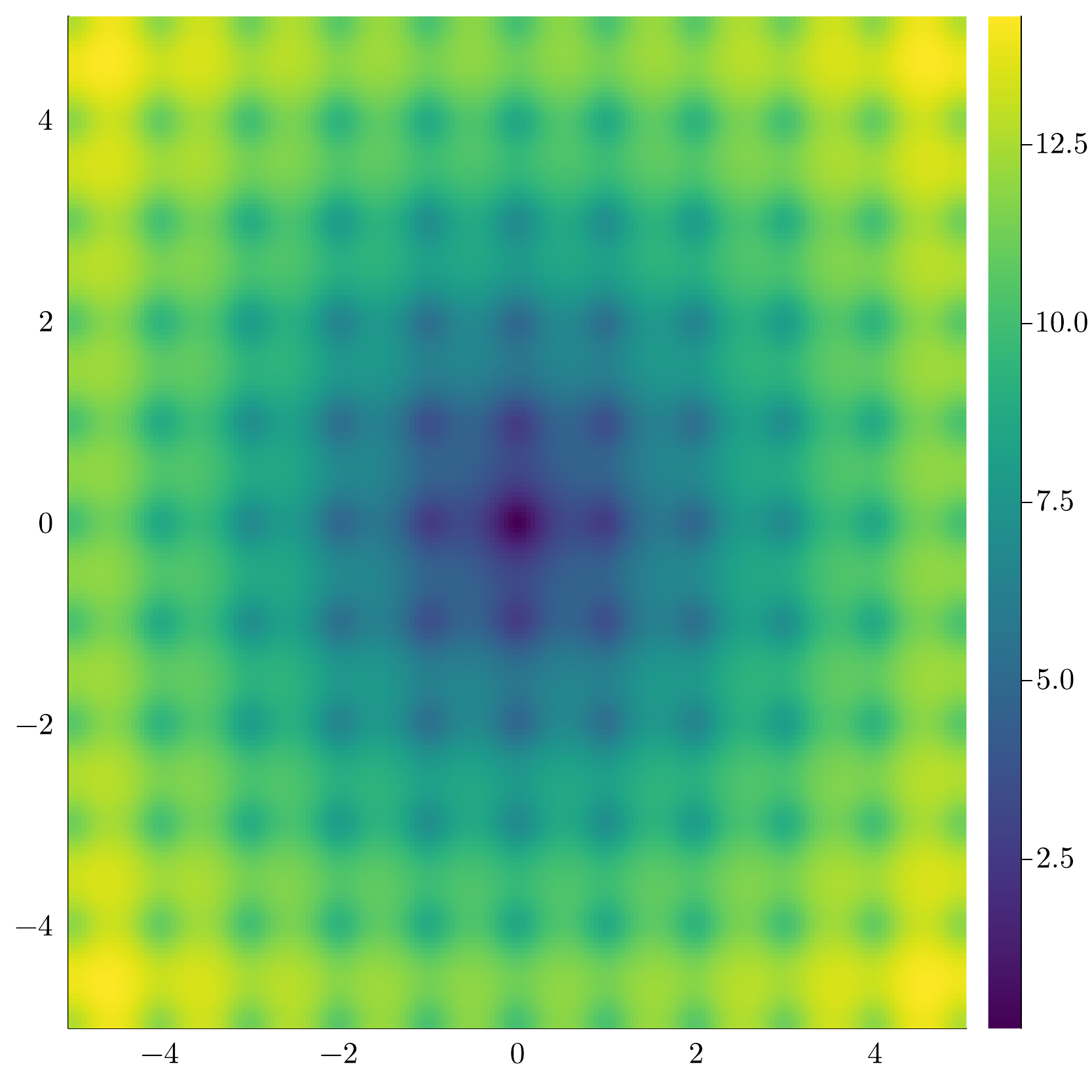}
    \includegraphics[width=0.49\linewidth]{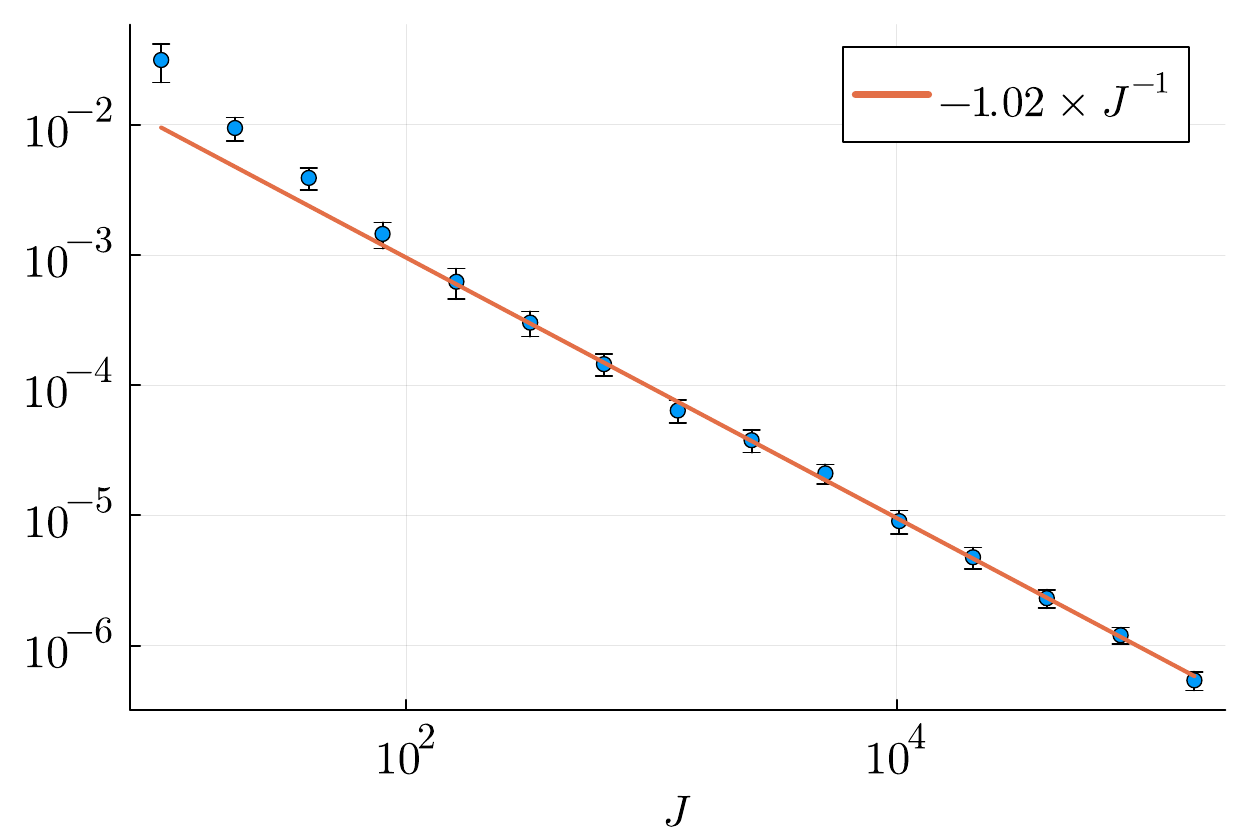}
    \caption{
            {\bf Left:} Ackley function.
            {\bf Right:} Monte Carlo estimator~$\widehat E(J)$ of~$E(J)$
            for $J\in \{10 \cdot 2^k\}_{k=0}^{14}$,
            and straight line exhibiting the asymptotic scaling as~$J^{-1}$ of~$E(J)$.
            Error bars corresponding to two standard deviations are depicted on both sides of the Monte Carlo estimator.
    }%
    \label{fig:evolution_errors}
\end{figure}

\section{Auxiliary results}
\label{sec:aux}

This section is organized as follows.
In~\cref{sub:stability_estimates}, we improve on existing Wasserstein stability estimates for the weighted mean and prove novel ones for
the weighted covariance
and the square root of the weighted covariance.
Then, in~\cref{sub:moment},
we present moment estimates for the empirical measures associated with the~CBO~\eqref{eq:cbo-particles} and CBS~\eqref{eq:cbs-particles} dynamics.
In~\cref{sub:iid},
we prove convergence estimates for the weighted moments when particle positions are drawn independently from a given probability distribution,
using results from the statistics literature.
Finally, in~\cref{sub:no_collapse},
we prove that if $\wc(\overline \rho_0) \succ 0$,
then the covariance matrix~$\wc(\overline \rho_t) \succ 0$ associated with the solution to the mean-field CBS dynamics~\eqref{eq:cbs-mf} remains strictly positive definite for all times.

\subsection{Wasserstein stability estimates for the weighted moments}
\label{sub:stability_estimates}

In this subsection, we generalize \cite[Lemma 3.2]{carrillo2018analytical} to the more general conditions in \cref{assump:main} and,
more importantly, to the weighted covariance and its square root.

\begin{proposition}
    [Basic stability estimates for the weighted mean and the weighted covariance]
    \label{lemma:stab:wmean-wcov-basic}
    Suppose that $f \in \mathcal A(s, \ell, u)$ with $s \ge 0$ and $u \geq \ell \geq 0$.
    Then:
    \begin{enumerate}[label=(\alph*)]
        \item \label{item:lem-stab-wmean}
              For all $R>0$ and for all $p\ge p_{\mathcal M}(s, \ell)$,
              where~$p_{\mathcal M}(s, \ell)$ is given in~\eqref{eq:def_ps},
              there exists $L_{\mathcal M} = L_{\mathcal M}(p,R)>0$ such that
              \begin{equation}
                  \label{eq:stab_wmean_simple}
                  \forall (\mu, \nu) \in \mathcal P_{p,R}\bigl(\real^d\bigr) \times \mathcal P_{p,R}\bigl(\real^d\bigr), \qquad
                  \myabs{\wm(\mu) - \wm(\nu)}
                  \le L_{\mathcal M} \, \wasserstein_p(\mu, \nu).
              \end{equation}
        \item \label{item:lem-stab-wcov}
              For all $R > 0$ and for all $p\ge p_{\mathcal C}(s, \ell)$,
              where~$p_{\mathcal C}(s, \ell)$ is given in~\eqref{eq:def_ps},
              there exists $L_{\mathcal C} = L_{\mathcal C}(p, R)>0$ such that
              \begin{equation}
                  \label{eq:stab_wcov_simple}
                  \forall (\mu, \nu) \in \mathcal P_{p,R}\bigl(\real^d\bigr) \times \mathcal P_{p,R} \bigl(\real^d\bigr), \qquad
                  \Bigl\lVert \wc(\mu) - \wc(\nu) \Bigr\rVert_{\rm F}
                  \le L_{\mathcal C} \, \wasserstein_p(\mu, \nu).
              \end{equation}

                  \end{enumerate}
\end{proposition}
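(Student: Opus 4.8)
The plan is to follow the classical argument for ratio-of-integrals estimates (as in \cite[Lemma 3.2]{carrillo2018analytical}), but to track the polynomial growth exponents carefully so that the thresholds $p_{\mathcal M}$ and $p_{\mathcal C}$ from~\eqref{eq:def_ps} come out as stated. Write $\mu_f := \mu[\e^{-\beta f}]$, so that $\wm(\mu) = \mu_f^{-1}\,\mu[x\e^{-\beta f}]$ and $\wc(\mu) = L_\beta\mu[x^{\otimes 2}] - \wm(\mu)^{\otimes 2}$. The first step is a \emph{uniform lower bound on the normalizer}: for every $\mu \in \mathcal P_{p,R}(\R^d)$ one has $\mu_f \ge \underline c(R,p) > 0$. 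Indeed, Markov's inequality on the $p$-th moment gives $\mu(\myabs{x} > \rho) \le R^p\rho^{-p}$, so with $\rho := (2R^p)^{1/p}$ we get $\mu(B_\rho) \ge \tfrac12$; since $f$ is locally Lipschitz it is bounded on $\overline{B_\rho}$ (and \eqref{assump:f-upper-bound} makes the bound explicit), whence $\mu_f \ge \tfrac12\exp(-\beta\sup_{B_\rho}f) =: \underline c$. Combining this with $\e^{-\beta f} \le \e^{-\beta f_\star}$ and $\mu[\myabs{x}^k] \le R^k$ for $k \le p$ also yields the crude uniform bounds $\myabs{\wm(\mu)} \le \underline c^{-1}\e^{-\beta f_\star}R$ and $\norm{L_\beta\mu[x^{\otimes 2}]} \le \underline c^{-1}\e^{-\beta f_\star}R^2$ (the latter uses $p \ge 2$, automatic once $\ell=0$ since then $p \ge s+3$; when $\ell > 0$ the functions $\e^{-\beta f}\myabs{x}^k$ are bounded, so nothing beyond $p \ge 1$ is needed).

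\emph{Reduction to Lipschitz estimates.} Next I would decompose
\[
    \wm(\mu) - \wm(\nu) = \frac{\mu[x\e^{-\beta f}] - \nu[x\e^{-\beta f}]}{\mu_f} + \nu[x\e^{-\beta f}]\,\frac{\nu_f - \mu_f}{\mu_f\nu_f},
\]
and likewise, using $a^{\otimes 2} - b^{\otimes 2} = (a-b)\otimes a + b\otimes(a-b)$,
\[
    \wc(\mu) - \wc(\nu) = \bigl(L_\beta\mu[x^{\otimes 2}] - L_\beta\nu[x^{\otimes 2}]\bigr) - \bigl(\wm(\mu)-\wm(\nu)\bigr)\otimes\wm(\mu) - \wm(\nu)\otimes\bigl(\wm(\mu)-\wm(\nu)\bigr).
\]
With the bounded factors from the first step, the problem reduces to controlling $\myabs{\mu[g] - \nu[g]}$ for $g$ among $\e^{-\beta f}$, $x\,\e^{-\beta f}$, and --- for part~(b) --- $x^{\otimes 2}\,\e^{-\beta f}$, together with an application of part~(a) to the $\wm(\mu)-\wm(\nu)$ terms appearing in the covariance decomposition (legitimate since $p \ge p_{\mathcal C} \ge p_{\mathcal M}$).

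\emph{Pointwise bounds and integration against the optimal coupling.} From $\myabs{\e^{-\beta f(x)} - \e^{-\beta f(y)}} \le \beta\e^{-\beta f_\star}\myabs{f(x)-f(y)}$, \cref{assump:f-local-lip-bound}, and $\norm{x^{\otimes 2} - y^{\otimes 2}}_{\rm F} \le (\myabs{x}+\myabs{y})\myabs{x-y}$, one obtains $\myabs{g(x) - g(y)} \le C\,(1+\myabs{x}+\myabs{y})^k\myabs{x-y}$ with $k = s$, $s+1$, $s+2$ respectively. Let $\pi$ be a $W_p$-optimal coupling of $\mu$ and $\nu$. When $\ell > 0$, the decay forced by \eqref{assump:f-lower-bound} upgrades these to $\myabs{g(x)-g(y)} \le C\myabs{x-y}$ (after a short case split according to whether $\myabs{x}$ or $\myabs{y}$ is large), so $\myabs{\mu[g]-\nu[g]} \le \int\myabs{g(x)-g(y)}\,\d\pi \le C\,W_p(\mu,\nu)$ for every $p \ge 1 = p_{\mathcal M} = p_{\mathcal C}$. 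When $\ell = 0$, Hölder with exponents $p$ and $p' = p/(p-1)$ gives
\[
    \myabs{\mu[g] - \nu[g]} \le C\left(\int (1+\myabs{x}+\myabs{y})^{kp'}\,\d\pi\right)^{1/p'} W_p(\mu,\nu),
\]
and the moment factor is bounded by a constant depending only on $R$ and $p$ (via $\mu[\myabs{x}^{kp'}] \le R^{kp'}$ and the same for $\nu$) exactly when $kp' \le p$, i.e. $p \ge k+1$. Taking the three values of $k$, part~(a) requires $p \ge \max\{s+1,s+2\} = s+2 = p_{\mathcal M}(s,0)$, and part~(b) requires additionally $p \ge s+3 = p_{\mathcal C}(s,0)$. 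Assembling these estimates with the bounded prefactors yields \eqref{eq:stab_wmean_simple} and \eqref{eq:stab_wcov_simple} with $L_{\mathcal M}$, $L_{\mathcal C}$ depending on $p$, $R$ and the fixed data $(\beta,f_\star,L_f,s,\ell,u,d)$.

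\emph{Main obstacle.} Steps two and three are mostly routine; the delicate points are (i) arranging the Hölder condition $kp' \le p$ so that it reproduces exactly the thresholds $s+2$ and $s+3$, and (ii) the global-Lipschitz upgrade when $\ell > 0$ --- the pointwise bound $\myabs{g(x)-g(y)} \le C(1+\myabs{x}+\myabs{y})^k\myabs{x-y}$ is not itself uniform, so one must exploit the super-polynomial decay of $\e^{-\beta f}$ through a region-by-region argument. The uniform lower bound $\mu_f \ge \underline c(R,p)$ in the first step is the other ingredient deserving care, since every bounded prefactor, and hence the constants $L_{\mathcal M}, L_{\mathcal C}$, depends on it.
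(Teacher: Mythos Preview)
Your proposal is correct and follows essentially the same approach as the paper: the paper packages the ratio-of-integrals estimate into an auxiliary lemma (\cref{lemma:stab-aux}), but its content---the normalizer lower bound via Markov's inequality, the decomposition of the ratio difference, and the H\"older step with exponent $\varsigma p/(p-1) \le p$ iff $p \ge \varsigma+1$---is exactly what you do inline, with the same case split $\varsigma = s+1$ (resp.\ $s+2$) for $\ell = 0$ and $\varsigma = 0$ for $\ell > 0$. Your identification of the global-Lipschitz upgrade for $\ell > 0$ as needing a short region-by-region argument is accurate; the paper asserts this without spelling it out.
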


\begin{proof}
    The proof for both \cref{item:lem-stab-wmean} and then \cref{item:lem-stab-wcov} relies on the technical result~\cref{lemma:stab-aux} in the appendix.
    \paragraph{Proof of \cref{item:lem-stab-wmean}}
    In the notation of \cref{lemma:stab-aux} we can write
    \[
        \wm(\mu)= \frac{\mu[g]}{\mu[h]}, \qquad g(x):=\e^{-\beta f(x)}x, \qquad h(x) \coloneq \e^{-\beta f(x)}.
    \]
    Condition~\cref{item:lem-stab-aux:loc-lip} is satisfied for~$\varsigma := s + 1$ if~$\ell = 0$ or~$\varsigma := 0$ if~$\ell > 0$.
    Therefore, the claim follows from \cref{lemma:stab-aux}.

    \paragraph{Proof of \cref{item:lem-stab-wcov}}
    First observe that
    \begin{align*}
        \wc(\mu)
        =\frac{\mu[g]}{\mu[h]}
        - \wm(\mu) \otimes \wm(\mu),
        \qquad g(x):= \e^{-\beta f(x)} (x \otimes x), \qquad h(x):=\e^{-\beta f(x)}.
    \end{align*}
    By the triangle inequality,
    we have that
    \begin{equation}
        \label{eq:triangle_inequality_wcov_bound}
        \bigl\lVert \wc(\mu) - \wc(\nu) \bigr\rVert_{\rm F}
        \leq
        \Bigl\lVert \wm(\mu) \otimes \wm(\mu) - \wm(\nu) \otimes \wm(\nu) \Bigr\rVert_{\rm F}
        + \left\lVert \frac{\mu[g]}{\mu[h]} - \frac{\nu[g]}{\nu[h]} \right\rVert_{\rm F}\,.
    \end{equation}
    We shall use that for all $(x, y) \in \R^d \times \R^d$, it holds that
    \begin{align}
        \notag
        \left\lVert x \otimes x - y \otimes y \right\rVert_{\rm F}
         & = \left\lVert x (x - y)^\t + (x - y) y^\t \right\rVert_{\rm F}                                     \\
        \label{eq:aux_weighted_mean}
         & \leq \left\lVert x (x - y)^\t \right\rVert_{\rm F} + \left\lVert (x - y) y^\t \right\rVert_{\rm F}
        = \myabs{x} \cdot \myabs{x - y} + \myabs{y} \cdot \myabs{x - y}
        = \bigl(\myabs{x} + \myabs{y}\bigr) \myabs{x - y}.
    \end{align}
    Using this inequality,
    that $p_{\mathcal C} \ge p_{\mathcal M}$ and \cref{item:lem-stab-wmean},
    we obtain for the first term in~\eqref{eq:triangle_inequality_wcov_bound} that
    \begin{align*}
        \Bigl\lVert
        \wm(\mu) \otimes \wm(\mu)
        - \wm(\nu) \otimes \wm(\nu)
        \Bigr\rVert_{\rm F}
         & \le \Bigl(\myabs*{\wm(\mu) } + \myabs*{\wm(\nu)}\Bigr)
        \cdot \bigl\lvert \wm(\mu) - \wm(\nu) \bigr\rvert                                               \\
         & \le L_{\mathcal M} \, \Bigl(\myabs*{\wm(\mu) } + \myabs*{\wm(\nu)}\Bigr) \, \wasserstein_p(\mu, \nu)    \\
         & \le L_{\mathcal M}^2 \, \Bigl(\wasserstein_p(\mu, \delta_0) + W_p(\nu, \delta_0)\Bigr) \, \wasserstein_p(\mu, \nu)
        \le 2 L_{\mathcal M}^2 R \, \wasserstein_p(\mu, \nu).
    \end{align*}
    In order to bound the second term in~\eqref{eq:triangle_inequality_wcov_bound},
    we again use~\eqref{eq:aux_weighted_mean} to
    deduce that the function~$g$ satisfies the local Lipschitz estimate
    \[
        \forall x, y \in \R^d, \qquad
        \bigl\lvert g(x)-g(y) \bigr\rvert \le L \bigl( 1 + |x|+|y|\bigr)^{\varsigma} \myabs{x-y},
        \qquad \varsigma :=
        \begin{cases}
            s+2 \quad & \text{ if~$\ell = 0$}, \\
            0 \quad   & \text{ if~$\ell > 0$}.
        \end{cases}
    \]
    We deduce that the functions~$g$ and~$h$ satisfy the assumption~\eqref{item:lem-stab-aux:loc-lip} of~\cref{lemma:stab-aux} with this~$\varsigma$.
    Therefore, it holds that
    \begin{align*}
        \forall (\mu, \nu) \in \mathcal P_{p,R}\bigl(\real^d\bigr) \times \mathcal P_{p,R}\bigl(\real^d\bigr),
        \qquad
        \left\lVert
        \frac{\mu[\e^{-\beta f(x)} x \otimes x]}{\mu[\e^{-\beta f(x)}]}
        - \frac{\nu[\e^{-\beta f(x)} x \otimes x]}{\nu[\e^{-\beta f(x)}]}
        \right\rVert_{\rm F}
        \le C \wasserstein_p(\mu, \nu),
    \end{align*}
    which completes the proof.
\end{proof}

\begin{proposition}
    [Basic stability estimate for the square root covariance]
    \label{proposition:stability:sqrt-wcov}
    Suppose that $f \in \mathcal A(s, \ell, u)$ with~$s \geq 0$ and $u \geq \ell \geq 0$.
    Then for all $R > 0$ and all $p \geq 2 p_{\mathcal M}(s, \ell)$,
    there exists~$L_{\sqrt{\mathcal C}} = L_{\sqrt{\mathcal C}}(p, R) >0$ such that
    \begin{align}
        \label{eq:wmean-wcov-emp-local-lip}
        \forall (\mu, \nu) \in \mathcal P_{p,R} \bigl(\real^d\bigr) \times \mathcal P_{p,R} \bigl(\real^d\bigr),
        \qquad
        \norm{
            \sqrt{\wc(\mu)} -
            \sqrt{ \wc (\nu) }}_{\rm F}
        \le L_{\sqrt{\mathcal C}} \, \wasserstein_p(\mu, \nu).
    \end{align}
\end{proposition}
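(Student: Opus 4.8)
The plan is to exploit that $\wc(\mu)=\mathcal C(L_\beta\mu)$ is a genuine covariance matrix (not merely an abstract positive semidefinite matrix), and to reduce the desired bound to a Wasserstein stability estimate for the reweighting map $L_\beta$. This detour is necessary: the matrix square root is only $\tfrac12$-Hölder continuous on positive semidefinite matrices in general, so one cannot deduce \eqref{eq:wmean-wcov-emp-local-lip} from the weighted–covariance estimate of \cref{lemma:stab:wmean-wcov-basic} together with a purely matrix-analytic argument.

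First I would record two facts. For positive semidefinite $A,B\in\real^{d\times d}$ one has $\norm{\sqrt A-\sqrt B}_{\rm F}\le\sqrt 2\,d_{\rm BW}(A,B)$, where $d_{\rm BW}(A,B)^2:=\mytrace(A)+\mytrace(B)-2\norm{\sqrt A\sqrt B}_1$ is the squared Bures–Wasserstein distance; this comparison between the ``Euclidean'' and Bures metrics on the square roots follows from the inequality $4\norm{PQ}_1\le\norm{P+Q}_{\rm F}^2$ for $P,Q\succcurlyeq 0$ applied to $P=\sqrt A$, $Q=\sqrt B$. Secondly, Gelbrich's inequality gives $d_{\rm BW}\bigl(\mathcal C(\alpha),\mathcal C(\gamma)\bigr)\le W_2(\alpha,\gamma)$ for all $\alpha,\gamma\in\mathcal P_2(\real^d)$. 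Since $p\ge 2p_{\mathcal M}(s,\ell)\ge 2$, both $L_\beta\mu$ and $L_\beta\nu$ lie in $\mathcal P_2(\real^d)$, and applying these facts with $A=\wc(\mu)$, $B=\wc(\nu)$ yields
\[
    \norm{\sqrt{\wc(\mu)}-\sqrt{\wc(\nu)}}_{\rm F}
    \le\sqrt 2\,d_{\rm BW}\bigl(\mathcal C(L_\beta\mu),\mathcal C(L_\beta\nu)\bigr)
    \le\sqrt 2\,W_2(L_\beta\mu,L_\beta\nu)
    \le\sqrt 2\,W_p(L_\beta\mu,L_\beta\nu).
\]
It therefore remains to prove the stability estimate $W_p(L_\beta\mu,L_\beta\nu)\le C(R)\,W_p(\mu,\nu)$ for all $\mu,\nu\in\mathcal P_{p,R}(\real^d)$.

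For this I would first obtain a two-sided control of the normalising constant: by Markov's inequality $\mu$ places at least half of its mass in the ball of radius $2^{1/p}R$, so the upper bound on $f$ in \cref{assump:f-at-infinity} (here with $u=\ell$) gives $\mu[\e^{-\beta f}]\ge c(R)>0$; together with $\e^{-\beta f}\le\e^{-\beta f_\star}$ this shows that the reweighting density $x\mapsto\e^{-\beta f(x)}/\mu[\e^{-\beta f}]$ is bounded by some $B(R)$, so $L_\beta\mu\le B(R)\,\mu$. Starting from an optimal $W_p$-coupling $\pi$ of $(\mu,\nu)$, one then builds a coupling of $(L_\beta\mu,L_\beta\nu)$ by transporting the ``common mass'' $\min\!\bigl(\tfrac{\e^{-\beta f(x)}}{\mu[\e^{-\beta f}]},\tfrac{\e^{-\beta f(y)}}{\nu[\e^{-\beta f}]}\bigr)\,\pi(\d x,\d y)$ along $\pi$ and transporting the leftover ``defect'' separately. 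The common part costs at most $B(R)\,W_p(\mu,\nu)^p$, and the defect has total mass at most $\int\bigl|\tfrac{\e^{-\beta f(x)}}{\mu[\e^{-\beta f}]}-\tfrac{\e^{-\beta f(y)}}{\nu[\e^{-\beta f}]}\bigr|\,\pi(\d x,\d y)$, which is bounded by $C(R)\,W_p(\mu,\nu)$ via the mean-value estimate $|\e^{-\beta f(x)}-\e^{-\beta f(y)}|\le\beta\,\e^{-\beta f_\star}\,|f(x)-f(y)|$, the growth bound \cref{assump:f-local-lip-bound}, and Hölder's inequality; when $\ell=0$ this Hölder step must absorb the factor $(1+|x|+|y|)^s$ into a moment, which is the mechanism fixing the threshold at $2p_{\mathcal M}(s,\ell)=2(s+2)$, whereas for $\ell>0$ the exponential confinement absorbs the polynomial growth for free. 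Combining the pieces gives $W_p(L_\beta\mu,L_\beta\nu)\le C(R)\,W_p(\mu,\nu)$, hence \eqref{eq:wmean-wcov-emp-local-lip} with $L_{\sqrt{\mathcal C}}=\sqrt2\,C(R)$.

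The main obstacle is precisely the reweighting stability $W_p(L_\beta\mu,L_\beta\nu)\le C(R)W_p(\mu,\nu)$: because the reweighting densities differ on the two sides, the optimal coupling of $(\mu,\nu)$ is not a coupling of $(L_\beta\mu,L_\beta\nu)$, and the defect must be re-transported without moving mass too far — a product coupling of the two defect measures would cost a fixed constant amount, destroying even a Hölder estimate. Organising this correctly (e.g.\ by interpolating between a total-variation/$W_1$ bound on the defect and the uniform moment bound $L_\beta\mu[|x|^p]\le B(R)R^p$ after truncation at a large radius) is what forces the stronger moment requirement $p\ge 2p_{\mathcal M}(s,\ell)$ compared with the plain weighted–covariance estimate.
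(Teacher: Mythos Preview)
Your strategy has a genuine gap: the claimed stability estimate $W_p(L_\beta\mu,L_\beta\nu)\le C(R)\,W_p(\mu,\nu)$ is \emph{false}, even after restricting to $\mathcal P_{p,R}$ and taking $p$ arbitrarily large. Take $d=1$, $f(x)=x^2$ (so $s=1$, $\ell=u=2$, $p_{\mathcal M}=1$), and
\[
    \mu=\tfrac12\delta_{-1}+\tfrac12\delta_{1},\qquad
    \nu_\epsilon=\tfrac12\delta_{-1-\epsilon}+\tfrac12\delta_{1}.
\]
Then $W_p(\mu,\nu_\epsilon)\asymp\epsilon$, while $L_\beta\mu=\mu$ and $L_\beta\nu_\epsilon=a_\epsilon\delta_{-1-\epsilon}+(1-a_\epsilon)\delta_{1}$ with $a_\epsilon=\tfrac12-\tfrac{\beta\epsilon}{2}+O(\epsilon^2)$. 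The monotone coupling then moves mass $\tfrac{\beta\epsilon}{2}$ across the gap of length~$2$, so $W_p(L_\beta\mu,L_\beta\nu_\epsilon)^p\asymp\epsilon$, i.e.\ $W_p(L_\beta\mu,L_\beta\nu_\epsilon)\asymp\epsilon^{1/p}$. Hence the ratio blows up like $\epsilon^{1/p-1}$ as $\epsilon\to0$; the same computation also rules out $W_2(L_\beta\mu,L_\beta\nu_\epsilon)\le C\,W_p(\mu,\nu_\epsilon)$. The obstacle you flag at the end is not a technicality to be ``organised correctly'' --- the target inequality is simply not true, because an $O(\epsilon)$ displacement of one atom causes an $O(\epsilon)$ \emph{mass} transfer in $L_\beta\nu_\epsilon$, which in $W_p$ costs $O(\epsilon^{1/p})$.

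The underlying issue is that the chain $\|\sqrt{\wc(\mu)}-\sqrt{\wc(\nu)}\|_{\rm F}\le\sqrt2\,d_{\rm BW}\le\sqrt2\,W_2(L_\beta\mu,L_\beta\nu)$ gives away too much at the Gelbrich step: in the example above, $d_{\rm BW}(\wc(\mu),\wc(\nu_\epsilon))\asymp\epsilon$ but $W_2(L_\beta\mu,L_\beta\nu_\epsilon)\asymp\epsilon^{1/2}$. The paper avoids passing through $W_p(L_\beta\mu,L_\beta\nu)$ altogether: it first reduces to empirical measures by a density argument, then writes $\wc(\mu^J)=M_{\mathbf X}M_{\mathbf X}^{\t}$ for an explicit $d\times J$ matrix with columns $\sqrt{w_j(\mathbf X)}\bigl(X^{(j)}-\wm(\mu^J)\bigr)$, and applies the Araki--Yamagami inequality $\|\sqrt{A^{\t}A}-\sqrt{B^{\t}B}\|_{\rm F}\le\sqrt2\,\|A-B\|_{\rm F}$. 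This gives a Lipschitz bound directly in terms of weighted particle differences, which can then be controlled columnwise by Hölder-type estimates on the weights $\sqrt{w_j}$ --- no Wasserstein control of the reweighted measures is needed.
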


\begin{proof}
    It is sufficient to check the claim for measures $\mu$ and $\nu$ of the form
    \begin{equation}
        \label{eq:form_empirical}
        \mu^J = \frac{1}{J} \sum_{j=1}^{J} \delta_{\xn{j}},
        \qquad
        \nu^J = \frac{1}{J} \sum_{j=1}^{J} \delta_{\yn{j}},
        \qquad J \in \N^+.
    \end{equation}
    Indeed,
    assume the claim holds for all such pairs of probability measures,
    fix $R > 0$ and take any pair of probability measures $(\mu, \nu) \in \mathcal P_{p,R} \times \mathcal P_{p,R}$.
    By~\cite[Theorem 6.18]{MR2459454},
    there exists a sequence $\bigl\{(\mu^J, \nu^J)\bigr\}_{J \in \N^+}$ in $\mathcal P_{p} \times \mathcal P_{p}$
    such that $\wasserstein_p(\mu^J, \mu) \to 0$ and~$\wasserstein_p(\nu^J, \nu) \to 0$ in the limit as $J \to \infty$.
    Since convergence in $\wasserstein_p$ implies convergence of the moments of order up to~$p$ by~\cite[Theorem 6.9]{MR2459454},
    we can assume, discarding the beginning of the sequence if necessary,
    that $(\mu^J, \nu^J) \in \mathcal P_{p,2R}$ for all $J$.
    Then
    \begin{align*}
        \norm{
            \sqrt{\wc(\mu}) -
            \sqrt{\wc (\nu)}
        }_{\rm F}
        \leq
         & \norm{
            \sqrt{\wc(\mu^J)} -
            \sqrt{\wc(\nu^J)}
        }_{\rm F}   \\
         & \qquad +
        \norm{
            \sqrt{\wc(\nu)} -
            \sqrt{\wc(\nu^J)}
        }_{\rm F}
        +
        \norm{
            \sqrt{\wc(\mu)} -
            \sqrt{\wc(\mu^J)}
        }_{\rm F}.
    \end{align*}
    The first term is bounded from above by $C  \wasserstein_p(\mu^J, \nu^J)$ by the base case,
    while the other two terms converge to 0 in the limit as~$J \to \infty$ by~\cref{lemma:stab:wmean-wcov-basic}.
    Taking the limit $J \to \infty$, we deduce that
    \[
        \norm{
            \sqrt{\wc(\mu)} -
            \sqrt{\wc (\nu)}
        }_{\rm F}
        \leq
        C  \wasserstein_p(\mu, \nu).
    \]

    \paragraph{Proof of the statement for empirical measures}
    Fix $R > 0$.
    By~\cite[p.5]{MR1964483},
    the Wasserstein distance between empirical measures $\mu^J$ and $\nu^J$ of the form~\eqref{eq:form_empirical}
    is equal to
    \[
        \wasserstein_p(\mu^J, \nu^J)
        = \min_{\sigma \in \mathcal S_J} \left(\frac{1}{J} \sum_{j=1}^{J} \left\lvert \xn{j} - Y^{\sigma(j)} \right\rvert^p\right)^{\frac{1}{p}},
    \]
    where $\mathcal S_J$ denotes the set of permutations in $\{1, \dotsc, J\}$.
    Thus, the claim will follow if we can prove that,
    for any pair $(\mu^J, \nu^J) \in \mathcal P_{p,R}$ of the form~\eqref{eq:form_empirical},
    it holds that
    \begin{equation}
        \label{eq:target_inequality}
        \norm{
            \sqrt{\wc(\mu^J)} -
            \sqrt{\wc (\nu^J)}
        }_{\rm F}
        \leq
        C \, \left(\frac{1}{J} \sum_{j=1}^{J} \left\lvert \xn{j} - \yn{j} \right\rvert^p\right)^{\frac{1}{p}}.
    \end{equation}
    We henceforth drop the superscript~$J$ in $\mu^J, \nu^J$ for simplicity of notation,
    and write~$\mathbf X = \left(\xn{1}, \dotsc, \xn{J}\right)$ and~$\mathbf Y = \left(\yn{1}, \dotsc, \yn{J} \right)$.
    First note that
    \[
        \wc(\mu)
        = \sum_{j=1}^J w_j(\mathbf{X}) \Bigl(\xn{j}-\wm(\mu)\Bigr) \otimes \Bigl(\xn{j}-\wm(\mu)\Bigr),
        \qquad w_j(\mathbf{X}):= \frac{ \e^{-\beta f(\xn{j})} }{ \sum_{k=1}^J \e^{-\beta f(\xn{k})}}.
    \]
    It is easily seen that $\wc(\mu) = M_{\mathbf X} M_{\mathbf X}^\t$,
    where $M_{\mathbf X}$ is the following $\R^{d\times J}$ matrix
    \[
        M_{\mathbf X} :=
        \begin{pmatrix}
            \sqrt{w_1(\mathbf{X})}  \left(\xn{1} - \wm(\mu) \right)
             & \hdots &
            \sqrt{w_J(\mathbf{X})}  \left(\xn{J} - \wm(\mu) \right)
        \end{pmatrix}.
    \]
    Proceeding in the same manner,
    we construct a matrix $M_{\mathbf{Y}} \in \R^{d\times J}$ such that $\wm(\nu)=M_{\mathbf{Y}} M_{\mathbf{Y}}^\t$.
    A result by Araki and Yamagami~\cite{Araki1981},
    later generalized by Kittaneh~\cite{MR0787884} and Bhatia~\cite{MR1275617},
    states for any two matrices~$A$ and $B$ with the same shape,
    it holds that
    \[
        \norm{\sqrt{A^\t A}-\sqrt{B^\t B}}_{\rm F} \le \sqrt{2}\norm{A-B}_{\rm F},
    \]
    see also~\cite[Theorem VII.5.7]{MR1477662} for a textbook presentation.
    This yields
    \begin{align*}
         & \norm{
            \sqrt{\wc(\mu)} -
        \sqrt{\wc (\nu) }}_{\rm F}                                                        \\
         & \qquad\le \sqrt{2} \left( \sum_{j=1}^J \myabs*{
            \sqrt{w_j(\mathbf{X})} \left(\xn{j} - \wm(\mu) \right)
        - \sqrt{w_j(\mathbf{Y})} \left(\yn{j} - \wm(\nu) \right)}^2 \right)^{\frac{1}{2}} \\
         & \qquad \le \sqrt{2}
        \left( \sum_{j=1}^J \myabs*{ \sqrt{w_j(\mathbf{X})} \xn{j}
            - \sqrt{w_j(\mathbf{Y})} \yn{j} }^2 \right)^{\frac{1}{2}}
        + \sqrt{2} \left( \sum_{j=1}^J \myabs*{ \sqrt{w_j(\mathbf{X})} \wm(\mu)
            - \sqrt{w_j(\mathbf{Y})} \wm(\nu) }^2 \right)^{\frac{1}{2}},
    \end{align*}
    where we applied the triangle inequality for the Frobenius norm.
    Let $\pi \in \Pi(\mu, \nu) \subset \mathcal P(\R^d \times \R^d)$ denote the probability measure
    \[
        \pi = \frac{1}{J} \sum_{j=1}^{J} \delta_{\left(\xn{j}, \yn{j} \right)}
    \]
    and set $\gamma = \frac{\beta}{2}$.
    We may rewrite the previous inequality as
    \begin{align*}
        \frac{1}{\sqrt{2}} \norm{
            \sqrt{\wc(\mu)} -
            \sqrt{\wc ( \nu) }}_{\rm F}
         & \leq \left( \iint_{\R^d \times \R^d} \left\lvert \frac{\e^{- \gamma f(x)} x}{\sqrt{\mu\left[\e^{-\beta f}\right]}}
        - \frac{\e^{-\gamma f(y)} y} {\sqrt{\nu \left[\e^{-\beta f}\right]}} \right\rvert^2 \, \pi (\d x \, \d y) \right)^{\frac{1}{2}}   \\
         & \qquad + \left( \iint_{\R^d \times \R^d} \left\lvert \frac{\e^{- \gamma f(x)} \wm(\mu)}{\sqrt{\mu \left[\e^{-\beta f}\right]}}
        - \frac{\e^{-\gamma f(y)} \wm(\nu)} {\sqrt{\nu \left[\e^{-\beta f}\right]}} \right\rvert^2 \, \pi (\d x \, \d y) \right)^{\frac{1}{2}}.
    \end{align*}
    The conclusion then follows from~\cref{lemma:aux_aux_rt_cov} in the appendix:
    \begin{itemize}
        \item
              For the first term we let $g(x) = \e^{- \gamma f(x)} x$, $h(x)= \e^{-\beta f(x)}$, and $\mathcal S(\mu)=1$.
              The conditions of~\cref{lemma:aux_aux_rt_cov} are satisfied for $\varsigma := s+1$ if~$\ell = 0$
              and~$\varsigma := 0$ if~$\ell > 0$.
        \item
              For the second term let $g(x) = \e^{- \gamma f(x)}$, $h(x)= \e^{-\beta f(x)}$ and $\mathcal S(\mu)=\wm(\mu)$.
              The conditions of~\cref{lemma:aux_aux_rt_cov} are satisfied for $\varsigma := s$ if~$\ell = 0$
              and~$\varsigma := 0$ if~$\ell > 0$.
    \end{itemize}
    This completes the proof.
\end{proof}

The key for our mean-field proofs is the following improvement of \cref{lemma:stab:wmean-wcov-basic,proposition:stability:sqrt-wcov}. Recall that
\begin{align*}
    p_{\mathcal M} := p_{\mathcal M}(s, \ell) := \begin{cases}
                                                     s+2,\, & \text{if $\ell = 0$}, \\
                                                     1,\,   & \text{if $\ell > 0$}.
                                                 \end{cases}
\end{align*}

\begin{corollary}
    [Improved stability estimates]
    \label{lemma:stab:improved}
    Suppose that $f \in \mathcal A(s, \ell, \ell)$ with $s, \ell \geq 0$.
    Then
    \begin{enumerate}[label=(\alph*), wide]
        \item \label{item:lem-stab-wmean-one-mom}
              For all $R>0$ and for all $p\ge p_{\mathcal M}$,
              there exists $\widetilde L_{\mathcal M} = \widetilde L_{\mathcal M}(R,p)>0$ such that
              \begin{align*}
                  \forall (\mu, \nu) \in \mathcal P_{p,R}(\real^d) \times \mathcal P_p(\real^d), \qquad
                  \myabs*{\wm(\mu) - \wm(\nu)}
                  \le \widetilde L_{\mathcal M} \, \wasserstein_p(\mu, \nu).
              \end{align*}

        \item \label{item:lem-stab-wcov-sqrt}
              For all $R>0$ and $p\ge 2 p_{\mathcal M}$,
              there exists $\widetilde L_{\mathcal C} = \widetilde L_{\mathcal C}(R,p)>0$ such that
              \begin{align*}
                  \forall (\mu, \nu) \in \mathcal P_{p,R}(\real^d) \times \mathcal P_{p}(\real^d),
                  \qquad
                  \Bigl\lVert \sqrt{\wc(\mu)} - \sqrt{\wc(\nu)} \Bigr\rVert_{\rm F}
                  \le \widetilde L_{\sqrt{\mathcal C}} \, \wasserstein_p(\mu, \nu).
              \end{align*}
    \end{enumerate}
\end{corollary}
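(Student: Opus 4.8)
The plan is to derive both estimates from the ``two-sided'' versions in \cref{lemma:stab:wmean-wcov-basic,proposition:stability:sqrt-wcov} by a dichotomy on the size of $W_p(\mu,\nu)$, using the weighted-moment bound \cref{lemma:bound-on-weighted-moment} to dispose of the regime in which the two measures are far apart. Fix $R>0$ and $p$ in the relevant range, take $(\mu,\nu)\in\mathcal P_{p,R}(\real^d)\times\mathcal P_p(\real^d)$, and set $\delta:=W_p(\mu,\nu)$ and $R_0:=\max\{R,1\}$. Since $W_p(\rho,\delta_0)=\bigl(\rho[|x|^p]\bigr)^{1/p}$, the triangle inequality for $W_p$ gives $\bigl(\nu[|x|^p]\bigr)^{1/p}\le W_p(\nu,\mu)+W_p(\mu,\delta_0)\le\delta+R$, so in particular $\nu\in\mathcal P_{p,\delta+R}$; the crucial point is that $\nu$ is then automatically in a Wasserstein ball whose radius is controlled by $\delta$.

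For the estimate on the weighted mean I would distinguish two cases. If $\delta\le R_0$, then $\nu\in\mathcal P_{p,R+R_0}$ while $\mu\in\mathcal P_{p,R}\subset\mathcal P_{p,R+R_0}$, so \cref{lemma:stab:wmean-wcov-basic} (first part), applied with moment bound $R+R_0$, yields $|\wm(\mu)-\wm(\nu)|\le L_{\mathcal M}(p,R+R_0)\,\delta$. If instead $\delta>R_0$, then by the triangle inequality and \cref{lemma:bound-on-weighted-moment} — which controls the weighted moments of a probability measure by its ordinary moments, with a constant $C=C(f,\beta,p)$ \emph{independent of the measure} — one gets
\begin{equation*}
  |\wm(\mu)-\wm(\nu)|\le|\wm(\mu)|+|\wm(\nu)|\le C\bigl(1+\bigl(\mu[|x|^p]\bigr)^{1/p}\bigr)+C\bigl(1+\bigl(\nu[|x|^p]\bigr)^{1/p}\bigr)\le 2C(1+R)+C\delta\le 5C\,\delta,
\end{equation*}
using $1\le\delta$ and $R\le R_0<\delta$. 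Then $\widetilde L_{\mathcal M}(R,p):=\max\{L_{\mathcal M}(p,R+R_0),\,5C\}$ works, proving the first assertion.

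The second assertion follows in the same way once one observes that, for any $\rho\in\mathcal P_p(\real^d)$ with $p\ge 2p_{\mathcal M}\ge 2$, the identity $\norm{\sqrt{\wc(\rho)}}_{\rm F}=\bigl(\mytrace\wc(\rho)\bigr)^{1/2}$, the elementary inequality $\mytrace\wc(\rho)=\mytrace\mathcal C(L_\beta\rho)\le(L_\beta\rho)[|x|^2]$, Jensen's inequality and \cref{lemma:bound-on-weighted-moment} together give $\norm{\sqrt{\wc(\rho)}}_{\rm F}\le C\bigl(1+\bigl(\rho[|x|^p]\bigr)^{1/p}\bigr)$. One then repeats the dichotomy verbatim: for $\delta\le R_0$ invoke \cref{proposition:stability:sqrt-wcov} with moment bound $R+R_0$, and for $\delta>R_0$ use the triangle inequality for $\norm{\cdot}_{\rm F}$ exactly as above.

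The entire content of the corollary is therefore concentrated in \cref{lemma:bound-on-weighted-moment}: once the weighted moments of a measure are controlled by its ordinary moments uniformly in the measure, the passage from $\mathcal P_{p,R}\times\mathcal P_{p,R}$ to $\mathcal P_{p,R}\times\mathcal P_p$ is the routine case split described above, and the only parameter one has to watch is that the constants $L_{\mathcal M}(p,\cdot)$, $L_{\sqrt{\mathcal C}}(p,\cdot)$ are being evaluated at the \emph{fixed} radius $R+R_0$ rather than at $R+\delta$. Accordingly, the real obstacle is not in this argument but upstream, in establishing that uniform weighted-moment bound (carried out in \cref{sub:bound_weighted_moments}), which is precisely the step where the matching-growth condition $u=\ell$ of \cref{assump:main} is exploited.
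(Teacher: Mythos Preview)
Your proof is correct and rests on exactly the same two ingredients as the paper's: the symmetric stability estimates of \cref{lemma:stab:wmean-wcov-basic,proposition:stability:sqrt-wcov} together with the linear growth bound $\lVert\mathcal T(\rho)\rVert\le K\,W_p(\rho,\delta_0)$ supplied (via \eqref{eq:bound_sqrt_cov} and \cref{lemma:bound-on-weighted-moment}) by the matching-growth hypothesis $u=\ell$. The only difference is packaging: the paper argues by contradiction --- assuming a sequence $(\mu_i,\nu_i)$ with $\lVert\mathcal T(\mu_i)-\mathcal T(\nu_i)\rVert/W_p(\mu_i,\nu_i)\ge i$, deducing that $W_p(\nu_i,\delta_0)\to\infty$, and then showing via the triangle inequality that the ratio is eventually at most $K$ --- whereas you give the direct dichotomy on $\delta=W_p(\mu,\nu)$. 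Your version has the minor advantage of producing an explicit constant $\widetilde L_{\mathcal M}=\max\{L_{\mathcal M}(p,R+R_0),\,5C\}$, while the contradiction argument only yields existence.

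One cosmetic point: \cref{lemma:bound-on-weighted-moment} is stated for $\ell>0$ only, so when $\ell=0$ the bound $\lvert\wm(\rho)\rvert\le C\bigl(\rho[|x|^p]\bigr)^{1/p}$ (and likewise for $\sqrt{\wc}$) should be drawn instead from the two-sided bound $\e^{-\beta f^*}\le\e^{-\beta f}\le\e^{-\beta f_\star}$ afforded by the boundedness of $f$; the paper separates these two cases explicitly.
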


\begin{remark}
    Observe that in contrast to \cref{lemma:stab:wmean-wcov-basic,proposition:stability:sqrt-wcov}, no upper bounds on the $p$-th moments of $\nu$ are needed in \cref{lemma:stab:improved}, it is only required that they are finite.
\end{remark}
\begin{proof}
    Let $\mathcal T := \wm$ or $\mathcal T := \sqrt{\wc}$,
    and denote $p_{\mathcal T} = p_{\mathcal M}$ if $\mathcal T = \wm$ or $p_{\mathcal T} = 2 p_{\mathcal M}$ if $\mathcal T = \sqrt{\mathcal C_{\beta}}$.
    Note that the following statements hold in either case.
    \begin{itemize}
        \item
              For all $R>0$ and $p \geq p_{\mathcal T}$,
              there exists $L_{\mathcal T}(R)>0$ such that
              \begin{equation}
                  \label{eq:improved_moment_bound_ass1}
                  \forall (\mu, \nu) \in \mathcal P_{p,R}(\real^d) \times \mathcal P_{p,R}(\real^d),
                  \qquad
                  \bigl\lVert \mathcal{T}(\mu) - \mathcal{T}(\nu) \bigr\rVert
                  \le L_{\mathcal T}(R) \, \wasserstein_p(\mu, \nu).
              \end{equation}
              Here, $\|\cdot\|$ denotes either $|\cdot|$ or $\|\cdot\|_{\rm F}$. Equation~\eqref{eq:improved_moment_bound_ass1} follows from~\cref{lemma:stab:wmean-wcov-basic} if $\mathcal T = \wm$
              and \cref{proposition:stability:sqrt-wcov} if $\mathcal T = \sqrt{\wc}$.
        \item
              For all $p \geq p_{\mathcal T}$,
              there is $K > 0$ such that
              \begin{equation}
                  \label{eq:improved_moment_bound_ass2}
                  \forall \mu \in \mathcal P_p(\R^d),
                  \qquad
                  \bigl\lVert \mathcal{T}(\mu) \bigr\rVert \leq K \wasserstein_p(\mu, \delta_0).
              \end{equation}
              When $\mathcal T = \wm$,
              this follows immediately from the inequality $\e^{-\beta f^*}\le \e^{-\beta f(\placeholder)} \le \e^{-\beta f_*}$ if~$\ell = 0$ and from~\cref{lemma:bound-on-weighted-moment} if~$\ell > 0$,
              where $f^*$ the supremum of $f$ over $\real^d$.
              When~$\mathcal T = \sqrt{\wc}$,
              the inequality~\eqref{eq:improved_moment_bound_ass2} follows in the same way after noticing that
              \begin{align}
                  \label{eq:bound_sqrt_cov}
                  \Bigl\lVert \sqrt{\wc(\mu)} \Bigr\rVert_{\rm F}^2
                  = \mytrace\Bigl(\wc\left(\mu\right) \Bigr)
                  = \frac{\int_{\R^d} \myabs{x}^2 \e^{-\beta f(x)} \, \mu(\d x)} {\int_{\R^d} \e^{-\beta f(x)} \, \mu(\d x)}
                  - \Bigl\lvert \wm(\mu) \Bigr\rvert^2
                  \leq \frac{\int_{\R^d} \myabs{x}^2 \e^{-\beta f(x)} \, \mu(\d x)} {\int_{\R^d} \e^{-\beta f(x)} \, \mu(\d x)}
              \end{align}
              for all $\mu \in \mathcal P_p(\R^d)$.
    \end{itemize}
    Now, assume for contradiction that there exists $R>0$ and $\{(\mu_i, \nu_i)\}_{i \in \N}$ in $\mathcal P_{p,R}(\real^d) \times \mathcal P_{p}$ such that
    \begin{align}
        \label{eq:lem:lip-wasserstein-contrary}
        \forall i \in \N, \qquad
        \frac{\Bigl\lVert \mathcal{T}(\mu_i) -\mathcal{T}(\nu_i)  \Bigr\rVert }
        {\wasserstein_p\left(\mu_i, \nu_i\right)} \ge i.
    \end{align}
    In particular $\nu_i[\myabs{x}^p] \to \infty$ in the limit as $i \to \infty$,
    otherwise the left-hand side of~\eqref{eq:lem:lip-wasserstein-contrary} would be bounded from above uniformly in~$i$ by \eqref{eq:improved_moment_bound_ass1}.
    Therefore, $\wasserstein_p(\nu_i, \delta_0) \to \infty$.
    On the other hand,
    by the triangle inequality,
    \begin{align*}
        \wasserstein_p\left(\mu_i, \nu_i\right)
        \ge \wasserstein_p\left(\nu_i, \delta_0 \right) -  \wasserstein_p\left(\mu_i, \delta_0 \right)
        \ge \wasserstein_p\left(\nu_i, \delta_0 \right) -  R.
    \end{align*}
    Therefore, by \eqref{eq:improved_moment_bound_ass2},
    we deduce that
    \begin{align*}
        \limsup_{i \to \infty}
        \frac
        {\Bigl\lVert \mathcal{T}(\mu_i) -\mathcal{T}(\nu_i)  \Bigr\rVert}
        {\wasserstein_p\left(\mu_i, \nu_i\right)}
        \le K  \limsup_{i \to \infty} \frac{\wasserstein_p(\nu_i, \delta_0)  +  R}
        {\wasserstein_p\left(\nu_i, \delta_0 \right) - R}
        = K,
    \end{align*}
    which contradicts~\eqref{eq:lem:lip-wasserstein-contrary}.
\end{proof}

\subsection{Moment estimates for the empirical measures}
\label{sub:moment}

\begin{lemma}
    \label{lemma:moment-bounds-empirical}
    Let $f \in \mathcal A(s, \ell, \ell)$ with $s, \ell \geq 0$,
    and suppose that $\mfldis_0\in \mathcal P_{p}(\R^d)$ for some $p \geq 2$.
    \begin{enumerate}[label=(\alph*)]
        \item \label{item:moment-particle-CBO} Consider the particle system~\eqref{eq:cbo-particles} with $\mfldis_0^{\otimes J}$-distributed initial data,
              and let~$\mu^J_t$ denote the corresponding empirical measures.
              Then there exists a constant $\kappa>0$ independent of $J$
              such that for all $J\in\N^+$
              \begin{align*}
                  \expect \left[ \sup_{t\in[0,T]} \left\lvert X_t^{(j)} \right\rvert^{p} \right]
                  \quad \vee \quad
                  \expect \left[ \sup_{t\in[0,T]} \mu^J_t \bigl[ |x|^{p} \bigr] \right]
                  \quad \vee \quad
                  \expect \left[ \sup_{t\in[0,T]}\left\lvert \wm(\mu^J_t) \right\rvert^{p} \right]
                  \leq \kappa.
              \end{align*}
        \item \label{item:moment-particle-CBS}   Consider the particle system~\eqref{eq:cbs-particles} with $\mfldis_0^{\otimes J}$-distributed initial data,
              and let~$\mu^J_t$ denote the corresponding empirical measures.
              Then there exists a constant $\kappa>0$ independent of $J$ such that for all $J\in\N^+$
              \begin{align*}
                  \expect \left[ \sup_{t\in[0,T]} \left\lvert X_t^{(j)} \right\rvert^{p} \right]
                  \vee
                  \expect \left[ \sup_{t\in[0,T]} \mu^J_t \bigl[ |x|^{p} \bigr] \right]
                  \vee
                  \expect \left[ \sup_{t\in[0,T]}\left\lvert \wm(\mu^J_t) \right\rvert^{p} \right]
                  \vee
                  \expect \left[ \sup_{t\in[0,T]} \left\lVert \sqrt{\wc(\mu^J_t)} \right\rVert^{p} \right]
                  \leq \kappa.
              \end{align*}
    \end{enumerate}
\end{lemma}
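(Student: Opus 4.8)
The plan is to establish both parts by the standard $L^p$ moment estimate for It\^o diffusions — It\^o's formula together with the Burkholder--Davis--Gundy (BDG) inequality and Gr\"onwall's lemma — carried out with enough care that every constant remains independent of $J$. The first step is to reduce the three (resp.\ four) quantities to a single one. Since the initial data $\mathbf X_0^{(J)}\sim\mfldis_0^{\otimes J}$ is exchangeable and the coefficients in~\eqref{eq:cbo-particles} and~\eqref{eq:cbs-particles} are symmetric in the particle labels, the law of $\sup_{t\in[0,T]}\lvert X^{(j)}_t\rvert^p$ does not depend on $j$. From $\sup_{t\le T}\mu^J_t[\lvert x\rvert^p]\le\frac1J\sum_{k=1}^J\sup_{t\le T}\lvert X^{(k)}_t\rvert^p$ we deduce $\expect\bigl[\sup_{t\le T}\mu^J_t[\lvert x\rvert^p]\bigr]\le\Phi(T):=\expect\bigl[\sup_{t\le T}\lvert X^{(j)}_t\rvert^p\bigr]$, so it suffices to bound $\Phi(T)$ uniformly in $J$. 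The weighted-mean term reduces to $\Phi(T)$ as well: when $\ell=0$ the two-sided bound $\e^{-\beta f^\star}\le\mu^J_t[\e^{-\beta f}]\le\e^{-\beta f_\star}$ furnished by \cref{assump:f-at-infinity} gives $\lvert\wm(\mu^J_t)\rvert^p\le C\,\mu^J_t[\lvert x\rvert^p]$ pointwise, and when $\ell>0$ the analogous bound (up to an additive constant) is provided by \cref{lemma:bound-on-weighted-moment}; combining either one with~\eqref{eq:bound_sqrt_cov} also yields $\bigl\lVert\sqrt{\wc(\mu^J_t)}\bigr\rVert_{\rm F}^p=\bigl(\mytrace\wc(\mu^J_t)\bigr)^{p/2}\le C\bigl(1+\mu^J_t[\lvert x\rvert^p]\bigr)$, with $C$ independent of $J$. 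Taking $\sup_t$ and $\expect$ then bounds all these by $C\bigl(1+\Phi(T)\bigr)$.

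To bound $\Phi(T)$, I would first localize, so as to guarantee that the quantities appearing before Gr\"onwall's lemma is applied are finite: set $\tau_R:=\inf\{t\ge0:\max_k\lvert X^{(k)}_t\rvert\ge R\}$, which increases to $+\infty$ almost surely as $R\to\infty$ by the a.s.\ continuity of the paths (\cref{thm:cbo-cbs-particles-well-posed}), and apply It\^o's formula to $V(x):=(1+\lvert x\rvert^2)^{p/2}$ along $X^{(j)}_{\cdot\wedge\tau_R}$. Using $\lvert\nabla V(x)\rvert\le p\,V(x)^{(p-1)/p}$, $\lVert\hessian V(x)\rVert\le C\,V(x)^{(p-2)/p}$, the bound $\lvert b(x,\mu^J_s)\rvert\le\lvert x\rvert+\lvert\wm(\mu^J_s)\rvert$, the global Lipschitz continuity of $\cbodifffunc$ for CBO (so that $\lVert\sigma(x,\mu^J_s)\rVert_{\rm F}^2\le C(1+\lvert x\rvert^2+\lvert\wm(\mu^J_s)\rvert^2)$) and the identity $\lVert\sigma(\cdot,\mu^J_s)\rVert_{\rm F}^2=2\lambda^{-1}\mytrace\wc(\mu^J_s)\le C(1+\mu^J_s[\lvert x\rvert^2])$ for CBS, together with Young's inequality (this is where $p\ge2$ enters) and the bound on $\lvert\wm(\mu^J_s)\rvert^p$ above, every contribution to the drift and to the second-order part is dominated, up to a $J$-independent constant, by $1+V(X^{(j)}_s)+\mu^J_s[\lvert x\rvert^p]$. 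Taking $\sup_{s\le t\wedge\tau_R}$ and $\expect$, estimating the martingale term by BDG, absorbing the resulting $\expect\bigl[\sup_{s\le t\wedge\tau_R}V(X^{(j)}_s)\bigr]$-contribution into the left-hand side by Young's inequality, and using once more the exchangeability bound $\expect\bigl[\sup_{u\le s}\mu^J_u[\lvert x\rvert^p]\bigr]\le\expect\bigl[\sup_{u\le s}V(X^{(j)}_u)\bigr]$, one arrives at
\[
    \expect\Bigl[\sup_{s\le t\wedge\tau_R}V\bigl(X^{(j)}_s\bigr)\Bigr]
    \le C\bigl(1+\mfldis_0[\lvert x\rvert^p]\bigr)+C\int_0^t\expect\Bigl[\sup_{u\le s\wedge\tau_R}V\bigl(X^{(j)}_u\bigr)\Bigr]\,\d s .
\]
The left-hand side is finite for each fixed $R$ because $\lvert X^{(j)}_{s\wedge\tau_R}\rvert\le R$, so Gr\"onwall's lemma produces a bound independent of $R$ and $J$; letting $R\to\infty$ and invoking Fatou's lemma gives $\Phi(T)<\infty$ with a bound independent of $J$, and the reduction of the first paragraph then completes the proof of both~\ref{item:moment-particle-CBO} and~\ref{item:moment-particle-CBS}.

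The main obstacle — and the only step that is not entirely routine — is the control, \emph{uniform in $J$}, of the weighted mean and, for CBS, of the trace of the weighted covariance in terms of ordinary moments of $\mu^J_t$: since $f$ may be unbounded, the denominators $\mu^J_t[\e^{-\beta f}]$ can be arbitrarily small, so one cannot naively bound numerator over denominator. For $\ell=0$ this is circumvented by the two-sided bound recorded above; for $\ell>0$ it is precisely what \cref{lemma:bound-on-weighted-moment} provides, its proof exploiting the coercivity $f(x)-f_\star\ge c_\ell(\lvert x\rvert^\ell-1)$ to offset the possibly small denominator. The other point requiring a little care is the localization by $\tau_R$, which is what makes the application of Gr\"onwall's lemma legitimate; everything else is standard.
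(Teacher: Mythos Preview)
Your proof is correct and follows essentially the same approach as the paper's — BDG, exchangeability, the weighted-moment bound of \cref{lemma:bound-on-weighted-moment}, and Gr\"onwall. The only cosmetic differences are that the paper applies BDG directly to $|X_t|^p \le 3^{p-1}\bigl(|X_0|^p+|\int_0^t b\,\d s|^p+|\int_0^t\sigma\,\d W|^p\bigr)$ rather than going through It\^o's formula on $(1+|x|^2)^{p/2}$, and it does not explicitly localize by stopping times (your version is therefore a touch more careful).
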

\begin{proof}
    The result can be proved using the same reasoning as in~\cite[Lemma 3.4]{carrillo2018analytical} or~\cite[Lemma~15]{fornasier2021consensusbased}.
    For convenience, we denote by~$C$ any constant depending only on $(f, \beta, \lambda, p, T)$.
    We first observe by~\eqref{eq:bound_sqrt_cov} and~\cref{lemma:bound-on-weighted-moment} that
    \begin{equation}
        \label{eq:use_lemma_bounded_moments}
        \forall \mu \in \mathcal P_p(\R^d), \qquad
        \Bigl\lvert \wm(\mu) \Bigr\rvert
        \vee
        \Bigl\lVert \sqrt{\wc(\mu)} \Bigr\rVert_{\rm F}
        \leq C\left(\int_{\R^d} \myabs{x}^p \, \d \mu\right)^{\frac{1}{p}}.
    \end{equation}
    Writing the drift and diffusion coefficients for \cref{item:moment-particle-CBO} and \cref{item:moment-particle-CBS}
    as in~\eqref{eq:cbo-drift-and-diff} and \eqref{eq:cbs-drift-and-diff},
    respectively, it holds in both cases that
    \begin{align*}
        \forall x\in\R^d, \qquad \forall \mu\in \mathcal{P}(\R^d), \qquad
        \bigl\lvert b(x,\mu) \bigr\rvert \vee \bigl\lVert \sigma(x,\mu) \bigr\rVert_{\rm F}
        \le C\left(\myabs{x}^p+ \int_{\R^d} |y|^p \, \mu(\d y)\right)^{\frac{1}{p}}.
    \end{align*}
    Arguing in parallel for \cref{item:moment-particle-CBO} and \cref{item:moment-particle-CBS}, we fix $j\in \range{1}{J}$ and apply the Burkholder--Davis--Gundy inequality to find for all $t \in [0, T]$ that
    \begin{align*}
        \frac{1}{3^{p-1}} \expect \left[ \sup_{s \in [0, t]} \left\lvert X_s^{(j)} \right\rvert^{p} \right]
         & \leq \expect \left\lvert X_0^{(j)} \right\rvert^{p}
        + T^{p-1} \int_{0}^t \expect \left\lvert  b\left( X_s^{(j)}, \mu^J_s\right) \right \rvert^p \, \d s                                                                             \\
         & \qquad \qquad + C_{\rm BDG}T^{\frac{p}{2}-1} \int_{0}^t \expect \left\lVert \sigma\left( X_s^{(j)}, \mu^J_s\right) \right\rVert_{\rm F}^p  \, \d s                           \\
         & \leq \expect \left\lvert X_0^{(j)} \right\rvert^{p} + C \int_{0}^t \expect \left[ \left\lvert X_s^{(j)} \right\rvert^p + \int_{\R^d} |y|^p \, \mu^J_s(\d y) \right] \, \d s.
    \end{align*}
    Since the common law of $\left(\xn{J}, \dots, \xn{J}\right)$ is invariant under permutations of $\llbracket 1, J\rrbracket$,
    its marginals corresponding to the particles are all the same.
    Therefore, given that $\mu^J_s= \frac{1}{J}\sum_{j=1}^J \delta_{\xn{j}_s}$,
    we find that
    \[
        \expect \left[ \int_{\R^d} |y|^p \, \mu^J_s(\d y) \right] = \expect \left\lvert \xn{j}_s \right\rvert^p.
    \]
    Consequently, it holds that
    \begin{align*}
        \label{eq:moment_particle_i}
        \expect \left[ \sup_{s \in [0, t]} \lvert X_s^{(j)} \rvert^{p} \right]
         & \leq C \expect \left\lvert X_0^{(j)} \right\rvert^{p} +
        C \int_{0}^t  \expect \left[ \sup_{u \in [0, s]}  \left\lvert X_u^{(j)} \right\rvert^{p} \right] \, \d s.
    \end{align*}
    Gr\"onwall's inequality then gives that
    \[
        \expect \left[ \sup_{t \in [0, T]} \bigl\lvert X_t^{(j)} \bigr\rvert^{p}  \right]  \leq \kappa.
    \]
    The second bound in \cref{item:moment-particle-CBO} and in \cref{item:moment-particle-CBS} follows from the inequality
    \[
        \expect \left[ \sup_{t \in [0, T]} \mu^J_t\bigl[ \left\lvert x \right\rvert^p \bigr] \right]
        = \expect \left[ \sup_{t \in [0, T]} \frac{1}{J} \sum_{j=1}^{J} \bigl\lvert \xn{j}_t \bigr\rvert^p \right]
        \leq \expect \left[ \frac{1}{J} \sum_{j=1}^{J} \sup_{t \in [0, T]} \bigl\lvert \xn{j}_t \bigr\rvert^p \right]
        = \expect \left[ \sup_{t \in [0, T]} \bigl\lvert \xn{j}_t \bigr\rvert^p \right],
    \]
    while the third bound in \cref{item:moment-particle-CBO} as well as the third and fourth bound in  \cref{item:moment-particle-CBS} follow from \eqref{eq:use_lemma_bounded_moments}.
\end{proof}

\subsection{Convergence of weighted moments for i.i.d.\ samples}
\label{sub:iid}

For the proof of the lemmas in this section,
we will need \cite[Theorem 1]{MR2597592}, which we recall for the reader's convenience in \cref{theorem:doukhan} below, slightly adapted to the i.i.d.\ setting.
We also refer to \cite[Theorem 2.3]{MR3696003}, where similar results for $p = 1$ and $p = 2$ are proved in the context of importance sampling,
with explicit constant prefactors.

\begin{theorem}
    [$L^p$ convergence of weighted sums]
    \label{theorem:doukhan}
    Let $(w_j, V_j)_{j \in \N}$ be i.i.d.\ random variables with values in~$\R \times \R^d$ or $\R \times \R^{d\times d}$ with $w_j > 0$ almost surely,
    and set
    \[
        \widehat N_J = \frac{1}{J} \sum_{j=1}^{J} w_j V_j, \qquad
        \widehat D_J = \frac{1}{J} \sum_{j=1}^{J} w_j, \qquad
        \widehat R_J = \frac{\widehat N_J}{\widehat D_J}.
    \]
    Let also $N = \expect \left[\widehat N_J\right]$, $D = \expect \left[\widehat D_J\right]$, and $R = N/D$.
    Let $0 < p < q$ and assume that for some $c>0$,
    \[
        \norm{w_1}_{L^q(\Omega)} \leq c, \qquad
        \norm{V_1}_{L^r(\Omega)} \leq c, \qquad
        \norm{w_1 V_1}_{L^s(\Omega)} \leq c,
        \qquad r := \frac{p(q+2)}{q-p},
        \qquad s := \frac{pq}{q-p}.
    \]
    Then there is~$C$ depending only on~$(c,p,q,r,s)$ such that
    \[
        \bigl\lVert  \widehat R_J - R  \bigr\rVert_{{L^p(\Omega)}}
        \leq C J^{-\frac{1}{2}}.
    \]
\end{theorem}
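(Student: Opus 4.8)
The plan is the classical linearization of a ratio estimator, combined with a large‑deviation bound for its denominator. Since $R = N/D$ we have $N = RD$, so the i.i.d.\ random variables $w_j(V_j - R)$ are centered: $\expect[w_1(V_1 - R)] = N - RD = 0$. Because $w_j > 0$ almost surely, $\widehat D_J > 0$ almost surely and $\widehat R_J$ is well defined, and one may write
\[
  \widehat R_J - R \;=\; \frac{\widehat N_J - R\,\widehat D_J}{\widehat D_J} \;=\; \frac{\Xi_J}{\widehat D_J},
  \qquad \Xi_J := \frac1J \sum_{j=1}^{J} w_j\bigl(V_j - R\bigr).
\]
Everything then reduces to (a) controlling the centered i.i.d.\ average $\Xi_J$ in $L^p$, and (b) showing that $\widehat D_J$ stays above $D/2$ outside an event of very small probability. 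Accordingly I would set $G_J := \{\widehat D_J > D/2\}$ and estimate $\norm{(\widehat R_J - R)\one_{G_J}}_{L^p(\Omega)}$ and $\norm{(\widehat R_J - R)\one_{G_J^c}}_{L^p(\Omega)}$ separately.

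On $G_J$ one has $\lvert \widehat R_J - R\rvert \le (2/D)\,\lvert\Xi_J\rvert$, so it suffices to bound $\norm{\Xi_J}_{L^p(\Omega)}$. By Hölder's inequality and the hypotheses, the summand $w_1(V_1 - R) = w_1 V_1 - R\,w_1$ lies in $L^{\min(s,q)}(\Omega)$, and $\min(s,q) > p$, since $1/s = 1/p - 1/q < 1/p$ and $q > p$; this is exactly why one assumes $w_1 V_1 \in L^s$ with $s = pq/(q-p)$. As $\Xi_J$ is an average of centered i.i.d.\ variables with a moment strictly above $p$, the Marcinkiewicz--Zygmund / Rosenthal inequalities for i.i.d.\ sums give $\norm{\Xi_J}_{L^p(\Omega)} \le C J^{-1/2}$ (passing through the exponent $2$ when $p < 2$).

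The heart of the argument is the event $G_J^c = \{\widehat D_J \le D/2\} \subseteq \{\lvert\widehat D_J - D\rvert \ge D/2\}$, on which the ratio could a priori be large. Here I would exploit that $\widehat R_J = \sum_{j=1}^J \frac{w_j}{\sum_k w_k} V_j$ is a convex combination of the $V_j$, whence $\lvert\widehat R_J\rvert \le \max_{j\le J}\lvert V_j\rvert \le \bigl(\sum_{j\le J}\lvert V_j\rvert^r\bigr)^{1/r}$, so $\lvert\widehat R_J - R\rvert^p \le 2^{p-1}\bigl(\sum_{j\le J}\lvert V_j\rvert^r\bigr)^{p/r} + 2^{p-1}\lvert R\rvert^p$. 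Applying Hölder with exponents $r/p$ and $r/(r-p)$ against $\one_{G_J^c}$, together with $\expect\bigl[\sum_{j\le J}\lvert V_j\rvert^r\bigr] = J\,\expect\lvert V_1\rvert^r \le J c^r$, gives
\[
  \expect\bigl[\lvert\widehat R_J - R\rvert^p\,\one_{G_J^c}\bigr]
  \;\le\; C\,(J c^r)^{p/r}\,\proba[G_J^c]^{\,1 - p/r} + C\,\lvert R\rvert^p\,\proba[G_J^c].
\]
By Markov's inequality applied to $\lvert\widehat D_J - D\rvert^q$ and the Marcinkiewicz--Zygmund inequality, $\proba[G_J^c] \le C J^{-q/2}$; and with the explicit value $r = p(q+2)/(q-p)$ one has $p/r = (q-p)/(q+2)$ and checks that $\tfrac{p}{r}\bigl(1 + \tfrac{q}{2}\bigr) - \tfrac{q}{2} = -\tfrac{p}{2}$, so that $\expect\bigl[\lvert\widehat R_J - R\rvert^p\,\one_{G_J^c}\bigr] \le C J^{-p/2}$, i.e.\ $\norm{(\widehat R_J - R)\one_{G_J^c}}_{L^p(\Omega)} \le C J^{-1/2}$. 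Combining the two bounds concludes the proof.

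The main obstacle is precisely this last step: one must balance the fast polynomial decay of $\proba[G_J^c]$, which is driven by the high moment $w_1 \in L^q$, against a crude deterministic control of the ratio on $G_J^c$, which consumes the moment $V_1 \in L^r$; requiring this trade‑off to still produce the rate $J^{-1/2}$ is exactly what pins down the exponents $r$ and $s$ in the statement. A secondary point is to make sure that, in the regime $p < 2$, the moment hypotheses are strong enough for the $L^p$‑moment inequalities for i.i.d.\ sums to yield the Gaussian rate rather than a slower one; this is again built into the choice of $r$ and $s$.
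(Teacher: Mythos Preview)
The paper does not give its own proof of this theorem; it is recalled from \cite[Theorem~1]{MR2597592} and used as a black box, so there is no in-paper argument to compare against. Your outline is the standard one for ratio estimators, and your treatment of the small-denominator event $G_J^c$ is correct: it explains neatly the choice $r=\frac{p(q+2)}{q-p}$ as precisely the exponent at which the crude bound $\lvert\widehat R_J\rvert\le\bigl(\sum_j\lvert V_j\rvert^r\bigr)^{1/r}$, after H\"older against $\one_{G_J^c}$ and the tail estimate $\proba[G_J^c]\le CJ^{-q/2}$, still produces the rate $J^{-p/2}$.

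The gap is on the good event in the regime $p<2$. Your claim $\lVert\Xi_J\rVert_{L^p}\le CJ^{-1/2}$ via ``passing through the exponent $2$'' requires $w_1(V_1-R)\in L^2$, i.e.\ $\min(s,q)\ge 2$, and the hypotheses do not enforce this (take $p=1$, $q=3$, so $s=\tfrac32$). A centered i.i.d.\ summand with only an $L^{p'}$ moment for some $p'\in(p,2)$ does \emph{not} obey the Gaussian rate in $L^p$; a symmetric $\alpha$-stable law with $\alpha\in(p',2)$ is already a counterexample. The companion bound $\proba[G_J^c]\le CJ^{-q/2}$ likewise presumes $q\ge 2$. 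Your argument is therefore complete when $p\ge 2$ (then $s,q>p\ge 2$ automatically) but not otherwise. In fact, setting $w_j\equiv 1$ reduces the statement to $\bigl\lVert\tfrac1J\sum_j(V_j-R)\bigr\rVert_{L^p}\le CJ^{-1/2}$ under only $V_1\in L^r$, which fails for $p=1$, $q=10$ (so $r=\tfrac43<2$); this suggests the version quoted here may be missing a side condition such as $q\ge 2$ from the original reference.
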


Using~\cref{theorem:doukhan},
we now prove the convergence of the weighted mean and weighted covariance for empirical measures formed from i.i.d.\ samples,
in the limit of infinitely many samples.

\begin{lemma}
    [Convergence of the weighted mean for i.i.d.\ samples]
    \label{lemma:convergence_weighted_mean_iid}
    Suppose that~$f\colon \real^d \to \real$ is bounded from below
    and let~$0 < p < r$.
    Then for all~$\mu \in \mathcal P_{r}(\R^d)$,
    there is $C$ depending only on $(f, \beta, p, r)$ and the $r$-th moment of~$\mu$ such that
    for all $J \in \N$
    \[
        \expect \left\lvert
        \wm\left(\overline \mu^J\right)
        - \wm\bigl(\mu\bigr)
        \right\rvert^p
        \leq C J^{- \frac{p}{2}},
        \qquad
        \overline \mu^J := \frac{1}{J} \sum_{j=1}^{J} \delta_{\xnl{j}},
        \qquad
        \left\{ \xnl{j} \right\}_{j \in \N} \stackrel{\rm{i.i.d.}}{\sim} \mu.
    \]
\end{lemma}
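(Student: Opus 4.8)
The plan is to apply \cref{theorem:doukhan} with the i.i.d.\ pairs $(w_j, V_j)$ given by $w_j := \e^{-\beta f(\xnl{j})}$ and $V_j := \xnl{j}$. With this choice the quantities appearing in \cref{theorem:doukhan} become
\[
  \widehat N_J = \frac1J \sum_{j=1}^{J} \e^{-\beta f(\xnl{j})}\, \xnl{j}, \qquad
  \widehat D_J = \frac1J \sum_{j=1}^{J} \e^{-\beta f(\xnl{j})}, \qquad
  \widehat R_J = \wm\bigl(\overline \mu^J\bigr),
\]
while $N = \int_{\R^d} \e^{-\beta f(x)}\, x\, \mu(\d x)$, $D = \int_{\R^d} \e^{-\beta f(x)}\, \mu(\d x)$, and $R = N/D = \wm(\mu)$. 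Since the underlying space $\Omega$ is a probability space, $\expect \bigl\lvert \widehat R_J - R \bigr\rvert^p = \norm{\widehat R_J - R}_{L^p(\Omega)}^p$, so it suffices to check the hypotheses of \cref{theorem:doukhan} for an appropriate value of the auxiliary exponent appearing there.

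First I would use that $f$ is bounded from below, say by $f_\star$, to obtain $0 < w_j \le \e^{-\beta f_\star}$ almost surely; in particular $\norm{w_1}_{L^q(\Omega)} \le \e^{-\beta f_\star}$ for \emph{every} $q > 0$. Next I would observe that the two exponents $r_q := p(q+2)/(q-p)$ and $s_q := pq/(q-p)$ featuring in \cref{theorem:doukhan} (denoted $r$ and $s$ there, which we rename to avoid a clash with the $r$ of the present statement) both decrease monotonically to $p$ as $q \to \infty$. Since $p < r$ by assumption, we may therefore fix once and for all a value of $q$, depending only on $p$ and $r$, such that $\max\{r_q, s_q\} \le r$. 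For this $q$, monotonicity of $L^a(\Omega)$-norms in $a$ on the probability space $\Omega$ gives
\[
  \norm{V_1}_{L^{r_q}(\Omega)} \le \norm{V_1}_{L^r(\Omega)} = \left( \int_{\R^d} \myabs{x}^r\, \mu(\d x) \right)^{1/r} =: \mathfrak m_r, \qquad
  \norm{w_1 V_1}_{L^{s_q}(\Omega)} \le \e^{-\beta f_\star}\, \mathfrak m_r,
\]
so all three moment hypotheses of \cref{theorem:doukhan} hold with $c := (1 + \e^{-\beta f_\star})(1 + \mathfrak m_r)$, a quantity depending only on $(f_\star, \beta, p, r)$ and the $r$-th moment of $\mu$.

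\cref{theorem:doukhan} then yields $\norm{\widehat R_J - R}_{L^p(\Omega)} \le C J^{-1/2}$ with $C = C(c,p,q)$, hence with $C$ depending only on $(f, \beta, p, r)$ and the $r$-th moment of $\mu$; raising both sides to the power $p$ gives the claimed bound $\expect \bigl\lvert \wm(\overline \mu^J) - \wm(\mu) \bigr\rvert^p \le C^p J^{-p/2}$. I do not expect a genuine obstacle here: all the quantitative work — a Marcinkiewicz--Zygmund-type control of the numerator and denominator together with an estimate on the event that $\widehat D_J$ is atypically small — is already packaged inside \cref{theorem:doukhan}. The only point needing a little care is that $r$ and $s$ in that theorem are not free but are tied to $p$ and $q$, so one must verify that the set of admissible $q$ is nonempty; this is precisely where the hypothesis $p < r$ enters.
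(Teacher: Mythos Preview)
Your proposal is correct and follows essentially the same route as the paper: both apply \cref{theorem:doukhan} with $w_j = \e^{-\beta f(\xnl{j})}$ and $V_j = \xnl{j}$, using boundedness of $w_j$ to control its moments and the hypothesis $\mu \in \mathcal P_r$ to control those of $V_j$ and $w_j V_j$. The only cosmetic difference is that the paper fixes the auxiliary exponent explicitly as $q = \frac{p(r+2)}{r-p}$, which makes $r_q = r$ on the nose, whereas you take $q$ large enough that $\max\{r_q,s_q\}\le r$; both choices are valid and yield the same conclusion.
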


\begin{proof}
    We apply~\cref{theorem:doukhan} with $q = \frac{p(r+2)}{r-p}$,
    in which case
    \(
    \frac{p(q+2)}{q-p} = r.
    \)
    In our setting
    \[
        (w_j, V_j) = \left( \e^{- \beta f\left(\xnl{j}\right)}, \xnl{j} \e^{-\beta f\left(\xnl{j}\right)} \right).
    \]
    Since $w_j$ is bounded from above almost surely,
    it obviously holds that $\norm{w_1}_{L^q(\Omega)} < \infty$.
    Since $\mu \in \mathcal P_r(\real^d)$, it also holds that $\norm{V_1}_{L^r(\Omega)} < \infty$
    and $\norm{w_1 V_1}_{L^s(\Omega)} < \infty$ since $s := \frac{pq}{q-p} < r$.
    Consequently, the assumptions of~\cref{theorem:doukhan} are satisfied,
    and we obtain the statement.
\end{proof}

\begin{lemma}
    [Convergence of the weighted covariance for i.i.d. samples]
    \label{lemma:convergence_weighted_covariance_iid}
    Suppose that~$f\colon \real^d \to \real$ is bounded from below
    and let~$0 < 2 p < r$.
    Then for all~$\mu \in \mathcal P_{r}(\R^d)$ satisfying $\wc(\mu) \succcurlyeq \eta \I_d \succ 0$,
    there is $C$ depending only on~$(f, \beta, p, r, \eta)$ and the $r$-th moment of~$\mu$ such that
    for all $J \in \N^+$
    \[
        \expect \left\lVert \sqrt{\wc(\overline \mu^J)} - \sqrt{\wc(\mu)}
        \right\rVert_{\rm F}^p
        \leq C J^{-\frac{p}{2}},
        \qquad
        \overline \mu^J := \frac{1}{J} \sum_{j=1}^{J} \delta_{\xn{j}},
        \qquad
        \left\{ \xnl{j} \right\}_{j \in \N} \stackrel{\rm{i.i.d.}}{\sim} \mu.
    \]
\end{lemma}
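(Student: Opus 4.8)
The plan is to reduce the statement to the analogous convergence result for the weighted covariance matrix itself (without square roots), via a perturbation estimate for the matrix square root that uses the hypothesis $\wc(\mu) \succcurlyeq \eta \I_d \succ 0$, and then to handle $\wc(\overline\mu^J) - \wc(\mu)$ by splitting it into a weighted–second–moment part, controlled by \cref{theorem:doukhan}, and a $\wm\otimes\wm$ part, controlled by \cref{lemma:convergence_weighted_mean_iid}.

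\emph{Step 1: square-root perturbation.} First I would record the elementary bound: for symmetric matrices $A \succcurlyeq \eta \I_d$ and $B \succcurlyeq 0$ one has $\norm{\sqrt A - \sqrt B}_{\rm F} \le \eta^{-1/2}\norm{A - B}_{\rm F}$. This follows from the identity $\sqrt A(\sqrt A - \sqrt B) + (\sqrt A - \sqrt B)\sqrt B = A - B$: writing $X := \sqrt A - \sqrt B$ (a symmetric matrix) and taking the Frobenius inner product with $X$ gives $\mytrace(X \sqrt A X) + \mytrace(\sqrt B X^2) = \langle A - B, X\rangle_{\rm F}$; the second trace is nonnegative since $\sqrt B \succcurlyeq 0$ and $X^2 \succcurlyeq 0$, the first trace is $\ge \sqrt\eta\,\norm{X}_{\rm F}^2$ since $\sqrt A \succcurlyeq \sqrt\eta\,\I_d$, and the right-hand side is $\le \norm{A-B}_{\rm F}\norm{X}_{\rm F}$. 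Applying this with $A = \wc(\mu) \succcurlyeq \eta \I_d$ and $B = \wc(\overline\mu^J) \succcurlyeq 0$ reduces the lemma to establishing $\expect\norm{\wc(\overline\mu^J) - \wc(\mu)}_{\rm F}^p \le C J^{-p/2}$.

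\emph{Step 2: convergence of the weighted covariance.} Using the decomposition $\wc(\mu) = \frac{\mu[\e^{-\beta f} x \otimes x]}{\mu[\e^{-\beta f}]} - \wm(\mu)\otimes\wm(\mu)$ and the triangle inequality, it suffices to bound the $L^p(\Omega)$ norms of the two differences $\frac{\overline\mu^J[\e^{-\beta f} x\otimes x]}{\overline\mu^J[\e^{-\beta f}]} - \frac{\mu[\e^{-\beta f} x\otimes x]}{\mu[\e^{-\beta f}]}$ and $\wm(\overline\mu^J)\otimes\wm(\overline\mu^J) - \wm(\mu)\otimes\wm(\mu)$. For the first, I would apply \cref{theorem:doukhan} with $w_j = \e^{-\beta f(\xnl j)}$ and $V_j = \xnl j \otimes \xnl j$, so that $\widehat R_J$ is precisely the first ratio and $R$ the second, choosing $q \in (p,\infty)$ such that $r' := \tfrac{p(q+2)}{q-p} = r/2$ (possible because $r/2 > p$ and $q \mapsto \tfrac{p(q+2)}{q-p}$ is a decreasing bijection from $(p,\infty)$ onto $(p,\infty)$). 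Then $\norm{w_1}_{L^q(\Omega)} < \infty$ since $f$ is bounded below, $\norm{V_1}_{L^{r'}(\Omega)} = \bigl(\expect\myabs{\xnl 1}^{r}\bigr)^{1/r'} < \infty$ since $2r' = r$ and $\mu \in \mathcal P_r$, and $\norm{w_1 V_1}_{L^s(\Omega)} < \infty$ since $s := \tfrac{pq}{q-p} < r'$; hence \cref{theorem:doukhan} gives $\expect\norm{\widehat R_J - R}_{\rm F}^p \le C J^{-p/2}$.

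\emph{Step 3: the $\wm \otimes \wm$ term and conclusion.} For the second difference I would use the inequality $\norm{x\otimes x - y\otimes y}_{\rm F} \le (\myabs{x}+\myabs{y})\myabs{x-y}$ from~\eqref{eq:aux_weighted_mean}, followed by the Cauchy–Schwarz inequality in $L^p(\Omega)$, and \cref{lemma:convergence_weighted_mean_iid} applied with exponent $2p$ (legitimate since $2p < r$), which yields $\expect\myabs{\wm(\overline\mu^J) - \wm(\mu)}^{2p} \le C J^{-p}$ and, in particular, $\sup_{J}\expect\myabs{\wm(\overline\mu^J)}^{2p} < \infty$. This gives
\[
    \expect\Bigl[\bigl(\myabs{\wm(\overline\mu^J)} + \myabs{\wm(\mu)}\bigr)^{p}\,\myabs{\wm(\overline\mu^J) - \wm(\mu)}^{p}\Bigr]
    \le \Bigl(\expect\bigl(\myabs{\wm(\overline\mu^J)} + \myabs{\wm(\mu)}\bigr)^{2p}\Bigr)^{1/2}\Bigl(\expect\myabs{\wm(\overline\mu^J) - \wm(\mu)}^{2p}\Bigr)^{1/2}
    \le C J^{-p/2}.
\]
Combining Steps 2 and 3 proves $\expect\norm{\wc(\overline\mu^J) - \wc(\mu)}_{\rm F}^p \le C J^{-p/2}$, and Step 1 then concludes. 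The only mildly delicate points are the exponent bookkeeping in the application of \cref{theorem:doukhan} and verifying $\sup_J \expect\myabs{\wm(\overline\mu^J)}^{2p} < \infty$; the place where the nondegeneracy assumption $\wc(\mu)\succ 0$ is genuinely needed is the matrix-square-root estimate of Step 1, since the square root fails to be Lipschitz near the origin.
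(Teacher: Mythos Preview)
Your proof is correct and follows essentially the same route as the paper: reduce to the covariance via a square-root perturbation bound (the paper cites the van Hemmen--Ando inequality, whereas you prove the $\eta^{-1/2}$ bound directly), then split $\wc(\overline\mu^J)-\wc(\mu)$ into the weighted second-moment ratio handled by \cref{theorem:doukhan} with $r' = r/2$ and the $\wm\otimes\wm$ term handled via~\eqref{eq:aux_weighted_mean}, Cauchy--Schwarz, and \cref{lemma:convergence_weighted_mean_iid} at exponent~$2p$. The exponent bookkeeping and the justification of $\sup_J \expect\lvert\wm(\overline\mu^J)\rvert^{2p} < \infty$ are exactly as in the paper.
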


\begin{proof}
    By assumption, it holds that $\sqrt{\wc(\overline \mu^J)} \succcurlyeq \sqrt{\eta} \I_d$.
    By an inequality due to van Hemmen and Ando~\cite{vanHemmen1980},
    see also~\cite[Problem X.5.5]{MR1477662},
    it holds that
    \[
        \left\lVert \sqrt{\wc(\overline \mu^J)} - \sqrt{\wc(\mu)} \right\rVert_{\rm F}
        \leq \frac{1}{\eta} \Bigl\lVert \wc(\overline \mu^J) - \wc(\mu) \Bigr\rVert_{\rm F}.
    \]
    Using the triangle inequality, we have that
    \begin{align*}
        \Bigl\lVert \wc(\overline \mu^J) - \wc(\mu) \Bigr\rVert_{\rm F}^p
         & = \Bigl\lVert L_{\beta} \overline \mu^J \left[x \otimes x\right] - L_{\beta} \mu \left[x \otimes x\right] - \wm(\overline \mu^J) \otimes \wm(\overline \mu^J) + \wm(\mu) \otimes \wm(\mu) \Bigr\rVert_{\rm F}^p \\
         & \leq 2^{p-1}\Bigl\lVert L_{\beta} \overline \mu^J \left[x \otimes x\right] - L_{\beta} \mu \left[x \otimes x\right] \Bigr\rVert_{\rm F}^p                                                                       \\
         & \qquad \qquad + 2^{p-1}\Bigl\lVert \wm(\overline \mu^J) \otimes \wm(\overline \mu^J) - \wm(\mu) \otimes \wm(\mu) \Bigr\rVert_{\rm F}^p.
    \end{align*}
    Convergence to zero of the expectation of the first term with rate $J^{-\frac{p}{2}}$ follows from~\cref{theorem:doukhan},
    this time with~$q = \frac{p(r+4)}{r-2p}$,
    in which case
    \(
    \frac{p(q+2)}{q-p} = \frac{r}{2}.
    \)
    For the second term,
    we use~\eqref{eq:aux_weighted_mean}
    to obtain that
    \begin{align*}
         & \expect \Bigl\lVert \wm(\overline \mu^J) \otimes \wm(\overline \mu^J) - \wm(\mu) \otimes \wm(\mu) \Bigr\rVert_{\rm F}^p                                                                               \\
         & \qquad \leq \expect \left[ \bigl(\myabs*{\wm(\overline \mu^J)} + \myabs{\wm(\mu)}\bigr)^p \myabs{\wm(\overline \mu^J) - \wm(\mu)}^p \right]                                                           \\
         & \qquad \leq \left( \expect \left[ \bigl(\myabs*{\wm(\overline \mu^J)} + \myabs{\wm(\mu)}\bigr)^{2p} \right] \expect \left[ \myabs{\wm(\overline \mu^J) - \wm(\mu)}^{2p} \right] \right)^{\frac{1}{2}} \\
         & \qquad \leq C J^{-\frac{p}{2}} \left( \expect \left[ \bigl(\myabs*{\wm(\overline \mu^J)} + \myabs{\wm(\mu)}\bigr)^{2p} \right] \right)^{\frac{1}{2}},
    \end{align*}
    where we used~\cref{lemma:convergence_weighted_mean_iid}.
    Since $\wm(\overline \mu^J)$ converges to $\wm (\mu)$ in $L^{2p}(\Omega)$ again by~\cref{lemma:convergence_weighted_mean_iid},
    this inequality enables to conclude the proof.
\end{proof}

\subsection{No collapse in finite time for CBS}
\label{sub:no_collapse}

In order to deal with the matrix square root in the CBS dynamics, we prove that the weighted covariance stays positive definite for finite times. This is particularly relevant when CBS is run in optimization mode with~$\lambda=1$, since then $\mathcal{C}(\mfldis_t) \to 0$ as $t\to \infty$ under appropriate assumptions \cite{CBS-Carrillo2021}.

\begin{proposition}[No collapse in finite time]
    \label{prop:cbs-minimal-eigenvalue-wcov}
    Let the cost function $f\colon \R^d \rightarrow \R$ be such that the macroscopic McKean SDE for CBS \eqref{eq:cbs-mf}
    has a solution $(\xl_t)_{t\in [0,T]}$ over some finite time interval $[0,T]$.
    Then
    \begin{align}
        \label{eq:cbs-cov-min-eigen-expo-decay}
        \forall t \in [0, T], \qquad
        \mathcal{C}(\mfldis_t)
        \succcurlyeq  \mathcal{C}(\mfldis_0) \e^{-2t}.
    \end{align}
    In particular, if $\mathcal{C}(\mfldis_0) \succ 0$ and if $t \mapsto \wc(\rho_t)$ is continuous (which is true under the assumptions of \cref{thm:cbs-mean-field-sde-well-posed}), then
    \begin{align}
        \label{eq:cbs-minimal-eigenvalue-cov}
        \inf_{t \in[0,T]}\lambda_{\rm min}\bigl(\mathcal{C}(\mfldis_t) \bigr) >0
         &  & \text{and} &  &
        \inf_{t \in[0,T]}\lambda_{\rm min}\bigl(\wc(\mfldis_t) \bigr) >0.
    \end{align}
\end{proposition}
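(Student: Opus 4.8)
The plan is to derive a closed linear matrix ODE for the unweighted covariance $\mathcal C(\mfldis_t)$, deduce the Loewner lower bound~\eqref{eq:cbs-cov-min-eigen-expo-decay} from it by a one-dimensional comparison argument, and then obtain~\eqref{eq:cbs-minimal-eigenvalue-cov} by combining this bound with a non-degeneracy (support) argument together with the continuity of $t \mapsto \wc(\mfldis_t)$.

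Concretely, I would first track the evolution of the first two moments $m_t := \mathcal M(\mfldis_t)$ and $S_t := \mfldis_t[x \otimes x]$, so that $\mathcal C(\mfldis_t) = S_t - m_t \otimes m_t$. Testing the weak Fokker--Planck equation~\eqref{eq:cbs:FKP} against $x \mapsto x_k$ and $x \mapsto x_k x_l$ (equivalently, applying Itô's formula to $\xl_t$ and $\xl_t \otimes \xl_t$ and taking expectations) gives
\[
    \frac{\d m_t}{\d t} = \wm(\mfldis_t) - m_t,
    \qquad
    \frac{\d S_t}{\d t} = \wm(\mfldis_t) \otimes m_t + m_t \otimes \wm(\mfldis_t) - 2 S_t + 2\lambda^{-1} \wc(\mfldis_t).
\]
Subtracting $\frac{\d}{\d t}(m_t \otimes m_t) = \wm(\mfldis_t) \otimes m_t + m_t \otimes \wm(\mfldis_t) - 2\, m_t \otimes m_t$, the weighted-mean contributions cancel exactly and one is left with $\frac{\d}{\d t}\mathcal C(\mfldis_t) = -2\, \mathcal C(\mfldis_t) + 2\lambda^{-1} \wc(\mfldis_t)$. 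Since $\wc(\mfldis_t) \succcurlyeq 0$ and $\lambda > 0$, taking the quadratic form of this identity in an arbitrary fixed vector $v \in \R^d$ yields, for $c_v(t) := v^\t \mathcal C(\mfldis_t) v \geq 0$, the inequality $c_v'(t) \geq -2 c_v(t)$; hence $t \mapsto e^{2t} c_v(t)$ is nondecreasing and $c_v(t) \geq e^{-2t} c_v(0)$, which is precisely~\eqref{eq:cbs-cov-min-eigen-expo-decay} as $v$ is arbitrary.

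For~\eqref{eq:cbs-minimal-eigenvalue-cov}: if $\mathcal C(\mfldis_0) \succ 0$ then~\eqref{eq:cbs-cov-min-eigen-expo-decay} gives $\mathcal C(\mfldis_t) \succcurlyeq e^{-2T} \lambda_{\rm min}\bigl(\mathcal C(\mfldis_0)\bigr) \I_d$ for all $t \in [0,T]$, which is the first claim. For the second, note that $\mathcal C(\mfldis_t) \succ 0$ means that $\mfldis_t$ is not supported on any affine hyperplane; since $e^{-\beta f} > 0$ everywhere, $L_\beta \mfldis_t$ is mutually absolutely continuous with $\mfldis_t$ and therefore also not supported on any hyperplane, so $\wc(\mfldis_t) = \mathcal C(L_\beta \mfldis_t) \succ 0$ for every $t \in [0,T]$. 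Under the hypotheses of~\cref{thm:cbs-mean-field-sde-well-posed}, $t \mapsto \wc(\mfldis_t)$ is continuous, hence so is $t \mapsto \lambda_{\rm min}(\wc(\mfldis_t))$, and a continuous strictly positive function on the compact interval $[0,T]$ has a positive infimum. The only point requiring genuine care is the justification of the moment ODE above: one needs $\mfldis_t \in \mathcal P_2(\R^d)$ for all $t$ (so that the quantities involved are well defined) and enough uniform integrability to handle the quadratically growing test functions / to ensure the stochastic integral in the Itô computation is a true martingale; this is provided by the moment bounds of~\cref{thm:cbs-mean-field-sde-well-posed} (or, when only the existence of a solution with finite second moments is assumed, by a routine localization argument). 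Everything after the ODE is elementary: a one-dimensional comparison principle, the integrating factor $e^{2t}$, and a continuity/compactness argument for the minimal eigenvalue.
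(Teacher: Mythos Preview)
Your proposal is correct and follows essentially the same approach as the paper: derive the matrix ODE $\frac{\d}{\d t}\mathcal C(\mfldis_t) = -2\mathcal C(\mfldis_t) + 2\lambda^{-1}\wc(\mfldis_t)$ (which the paper simply cites from~\cite{CBS-Carrillo2021} rather than re-deriving), apply a one-dimensional Gr\"onwall/comparison argument to the quadratic form $v^\t \mathcal C(\mfldis_t) v$, and then deduce non-degeneracy of $\wc(\mfldis_t)$ from non-degeneracy of $\mathcal C(\mfldis_t)$ via the equivalence of their supports. The only cosmetic difference is that the paper writes the last step as a contradiction argument (if $y^\t \wc(\mfldis_t) y = 0$ then $\mfldis_t$ is supported on an affine hyperplane, forcing $y^\t \mathcal C(\mfldis_t) y = 0$), which is the contrapositive of your absolute-continuity formulation.
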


\begin{proof}
    The evolution of the covariance matrix is governed by \cite[Equation (2.18)]{CBS-Carrillo2021}
    \begin{align*}
        \partial_t \left( \mathcal{C}(\mfldis_t)\right)
        = -2 \mathcal{C}(\mfldis_t)
        + 2\lambda^{-1} \wc(\mfldis_t).
    \end{align*}
    For a given $y\in\R^d$ such that $|y|=1$, define  $g(t):= y^\t \mathcal{C}(\mfldis_t) y$ for $t\in [0,T]$. Then
    \begin{align*}
        g'(t) = -2g(t) + 2\lambda^{-1} y^T \wc (\mfldis_t) y \geq -2 g(t),
    \end{align*}
    since $\wc (\mfldis_t)$ is positive semi-definite.
    Applying Gr\"onwall's inequality to $-g$,
    we deduce that
    \(
    g(t) \geq g(0) \e^{-2 t}.
    \)
    This shows \eqref{eq:cbs-cov-min-eigen-expo-decay} and the first inequality in~\eqref{eq:cbs-minimal-eigenvalue-cov}.
    In order to prove the second inequality in \eqref{eq:cbs-minimal-eigenvalue-cov}, assume that it was wrong.
    By continuity of $t \mapsto \wc(\rho_t)$, we have~$\lambda_{\rm min}\bigl(\wc(\mfldis_t) \bigr) =0$ for some $t\in[0,T]$.
    Then~$y^\t \wc (\mfldis_t) y = 0$ for some $y\in\R^d$  such that $|y|=1$.
    In particular
    \[
        \int_{\R^d} \left\lvert y^\t \Bigl( x - \wm(\mfldis_t) \Bigr) \right\rvert ^2 \e^{-\beta f(x)} \, \mfldis_t(\d x) = 0.
    \]
    Thus, we obtain $y^\t \bigl(x - \wm(\mfldis_t)\bigr) = 0$ for $\mfldis_t$-almost every $x\in\R^d$,
    and so $y^\t \wm(\mfldis_t) =  y^\t \mathcal M(\mfldis_t)$
    by integrating against $\mfldis_t$.
    But then,
    \[
        y^\t  \mathcal{C} (\mfldis_t) y
        = \int_{\R^d} \left\lvert y^\t \Bigl(x - \mathcal M(\mfldis_t)\Bigr) \right\rvert^2 \, \mfldis_t(\d x)
        = \int_{\R^d} \left\lvert y^\t \Bigl(x - \wm(\mfldis_t)\Bigr) \right\rvert^2 \, \mfldis_t(\d x)
        = 0,
    \]
    which is a contradiction.
    Thus, $C_{\beta}(\mfldis_t) \succ 0$ for all $t \in [0, T]$, which implies the second inequality in~\eqref{eq:cbs-minimal-eigenvalue-cov}.
\end{proof}

\section{Proof of the well-posedness results}
\label{sec:cbo-cbs-wellp}

\subsection{Well-posedness for the microscopic models}

For a collection of particles $\mathbf{X} \in \R^{d J}$, the notation $\lvert \mathbf{X} \rvert$ denotes the usual Euclidean norm;
that is
\[
    \lvert \mathbf{X} \rvert \coloneq \sqrt{\sum_{j=1}^{J} \left\lvert \xn{j} \right\rvert^2}.
\]
For the proof of \cref{thm:cbo-cbs-particles-well-posed} we need the following stability result, which only requires local Lipschitz continuity of $f$,
in contrast to the results in \cref{sub:stability_estimates}.

\begin{lemma}
    \label{lemma:cbs-microscopic-loc-lip}
    For all $\mathbf{X}\in \R^{d J}$ it holds that
    \begin{equation}
        \label{eq:wmean-wcov-emp-bound}
        \myabs{ \wm (\mathbf{X})} \le  \myabs{\mathbf{X}}
        \qquad \text{and} \qquad
        \norm{ \wc (\mathbf{X})} \le  \myabs{\mathbf{X}}^2.
    \end{equation}
    Moreover,  if $f\colon \R^d \rightarrow \R$ is locally Lipschitz continuous,
    then for each $R>0$ and $J\in \N^+$ there exists a constant~$C_{J,R}>0$ such that
    \begin{align}
        \label{eq:wmean-wcov-local-lip-wellposed}
        \myabs{
            \wm(\mathbf{X})- \wm ( \mathbf{Y}) }
        \le C_{J,R} \myabs{ \mathbf{X} -\mathbf{Y}}
         &  & \text{and} &  &
        \norm{
            \sqrt{\wc(\mathbf{X})} -
            \sqrt{ \wc ( \mathbf{Y}) }}_F
        \le C_{J,R} \myabs{ \mathbf{X} -\mathbf{Y}}
    \end{align}
    for all $\mathbf{X},\mathbf{Y}\in \R^{dJ}$ satisfying $\myabs{\textbf{X}}, |\mathbf{Y}|\le R$.
\end{lemma}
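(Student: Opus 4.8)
The plan is to treat the four inequalities in turn, with most of the work concentrated on the two local Lipschitz estimates in~\eqref{eq:wmean-wcov-local-lip-wellposed}. First I would dispense with the two bounds in~\eqref{eq:wmean-wcov-emp-bound}. Writing $\wm(\mathbf X) = \sum_{j=1}^J w_j \xn{j}$ with convex weights $w_j = \e^{-\beta f(\xn{j})} / \sum_k \e^{-\beta f(\xn{k})}$, the bound $\myabs{\wm(\mathbf X)} \le \sum_j w_j \myabs{\xn{j}} \le \max_j \myabs{\xn{j}} \le \myabs{\mathbf X}$ is immediate from Jensen. For the covariance, since $\wc(\mathbf X) = \sum_j w_j (\xn{j} - \wm(\mathbf X)) \otimes (\xn{j} - \wm(\mathbf X))$ is a convex combination of rank-one PSD matrices, its operator norm is at most $\max_j \myabs{\xn{j} - \wm(\mathbf X)}^2 \le (2 \max_j \myabs{\xn{j}})^2$; a slightly more careful estimate using $\sum_j w_j \myabs{\xn{j} - \wm(\mathbf X)}^2 \le \sum_j w_j \myabs{\xn{j}}^2 \le \myabs{\mathbf X}^2$ (the variance is bounded by the second moment) gives exactly $\norm{\wc(\mathbf X)} \le \myabs{\mathbf X}^2$.

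For the first estimate in~\eqref{eq:wmean-wcov-local-lip-wellposed}, I would invoke the already-proven Wasserstein stability estimate. Since $f$ is locally Lipschitz, on the ball $\{\myabs{x} \le R\}$ it satisfies a bound of the form~\eqref{eq:assump-f:lip-growth-gradient} with constants depending on $R$ (one does not have the global growth control of~\cref{assump:main}, but for the microscopic well-posedness we only compare configurations inside a fixed ball, so a purely local Lipschitz bound suffices). The cleanest route is to observe that for empirical measures $\mu^J = \frac1J\sum \delta_{\xn j}$, $\nu^J = \frac1J \sum \delta_{\yn j}$ supported in $B_R$, one has $W_p(\mu^J, \nu^J)^p \le \frac1J \sum_j \myabs{\xn j - \yn j}^p \le J^{-1} \myabs{\mathbf X - \mathbf Y}^p$ (comparing $\ell^2$ and $\ell^p$ norms on $\R^{dJ}$ up to a $J$-dependent constant), and then re-run the short argument behind \cref{lemma:stab:wmean-wcov-basic}\,\cref{item:lem-stab-wmean} and \cref{proposition:stability:sqrt-wcov} with the $R$-local Lipschitz constant of $f$ in place of the global one; since we are free to absorb any power of $J$ and any $R$-dependence into $C_{J,R}$, the weaker hypotheses on $f$ cause no difficulty. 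This yields both $\myabs{\wm(\mathbf X) - \wm(\mathbf Y)} \le C_{J,R}\,\myabs{\mathbf X - \mathbf Y}$ and, via the Araki--Yamagami/Kittaneh--Bhatia inequality $\norm{\sqrt{A^\t A} - \sqrt{B^\t B}}_{\rm F} \le \sqrt 2 \norm{A - B}_{\rm F}$ exactly as in the proof of~\cref{proposition:stability:sqrt-wcov}, the second estimate in~\eqref{eq:wmean-wcov-local-lip-wellposed}.

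Alternatively, and perhaps more transparently for a reader who wants a self-contained microscopic argument, I would argue directly: the map $\mathbf X \mapsto (w_1(\mathbf X), \dots, w_J(\mathbf X))$ is locally Lipschitz on $B_R \subset \R^{dJ}$ because $f$ is locally Lipschitz and $x \mapsto \e^{-\beta f(x)}$ is bounded above and below by positive constants on $B_R$, so the denominator $\sum_k \e^{-\beta f(\xn k)}$ is bounded away from zero; then $\wm(\mathbf X) = \sum_j w_j(\mathbf X)\xn j$ is a product/sum of locally Lipschitz, locally bounded functions, hence locally Lipschitz, and similarly for $M_{\mathbf X}$ (the $\R^{d\times J}$ matrix with columns $\sqrt{w_j(\mathbf X)}(\xn j - \wm(\mathbf X))$, using that $t \mapsto \sqrt t$ is locally Lipschitz away from $0$ and the $w_j$ are bounded below on $B_R$); feeding $\norm{M_{\mathbf X} - M_{\mathbf Y}}_{\rm F} \le C_{J,R}\myabs{\mathbf X - \mathbf Y}$ into the operator-monotonicity inequality $\norm{\sqrt{A^\t A} - \sqrt{B^\t B}}_{\rm F} \le \sqrt 2\,\norm{A - B}_{\rm F}$ closes the argument. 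The one genuine subtlety — the main obstacle — is precisely that the square-root function is \emph{not} Lipschitz at the origin, so a naive Lipschitz bound on $\wc(\mathbf X)$ composed with $\sqrt{\cdot}$ would fail; the Araki--Yamagami--Kittaneh--Bhatia inequality is exactly what circumvents this, converting a bound on the $M$-matrices into a bound on the matrix square roots of $MM^\t$ without any positivity lower bound, and I would flag this as the crucial ingredient rather than re-deriving it.
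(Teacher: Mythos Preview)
Your proposal is correct, and your ``alternative'' direct route is exactly the paper's approach: the paper cites \cite[Lemma~2.1]{carrillo2018analytical} for the local Lipschitz bound on~$\wm$, and for $\sqrt{\wc}$ it writes $\wc(\mathbf X) = M_{\mathbf X} M_{\mathbf X}^\t$, applies the Araki--Yamagami inequality, and then shows $\mathbf X \mapsto \sqrt{w_j(\mathbf X)}$ is locally Lipschitz via the identity $\sqrt{a}-\sqrt{b} = (a-b)/(\sqrt{a}+\sqrt{b})$ together with the uniform lower bound $w_j(\mathbf X) \ge J^{-1}\exp\bigl(-\beta(\max_{B_R} f - \min_{B_R} f)\bigr)$ on~$B_R$. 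Your first route (re-running \cref{lemma:stab:wmean-wcov-basic} and \cref{proposition:stability:sqrt-wcov} with purely $R$-local hypotheses on~$f$) would also work but is less self-contained, since those results are stated under the global \cref{assump:main}; the paper avoids this detour and argues directly at the level of the finite weight vector, which is why it never needs to invoke Wasserstein distances here.
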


\begin{proof}
    The bound on $\wm(\mathbf{X})$ in \eqref{eq:wmean-wcov-emp-bound} follows immediately from the definition:
    We have that
    \[
        \Bigl\lvert \wm(\mathbf{X}) \Bigr\rvert^2
        = \left\lvert \sum_{j=1}^{J} w_j(\mathbf{X}) \xn{j} \right\rvert^2
        \leq  \sum_{j=1}^{J} w_j(\mathbf{X}) \left\lvert \xn{j} \right\rvert^2
        \leq  \sum_{j=1}^{J} \left\lvert \xn{j} \right\rvert ^2
        = \left\lvert \mathbf{X} \right\rvert^2,
    \]
    where we used convexity of the square function and the
    fact that the weights, given hereafter,
    are bounded from above by 1:
    \[
        w_j(\mathbf{X}) \coloneq \frac{\e^{-\beta f\left(\xn{j}\right)}}{\sum_{k=1}^{J} \e^{-\beta f\left(\xn{k}\right)}}.
    \]
    Similarly, for the weighted covariance,
    it holds by the triangle inequality that
    \begin{align*}
        \left\lVert \wc(\mathbf{X}) \right\rVert_{\rm F}
         & = \left\lVert \sum_{j=1}^{J} w_j(\mathbf{X}) \Bigl(  \xn{j} \otimes \xn{j} \Bigr) -  \wm(\mathbf{X}) \otimes \wm(\mathbf{X}) \right\rVert_{\rm F} \\
         & \leq \left\lVert \sum_{j=1}^{J} w_j(\mathbf{X}) \Bigl(  \xn{j} \otimes \xn{j} \Bigr) \right\rVert_{\rm F}
        \leq \sum_{j=1}^{J} w_j(\mathbf{X})  \left\lVert \xn{j} \otimes \xn{j} \right\rVert_{\rm F}
        = \sum_{j=1}^{J} w_j(\mathbf{X})  \left\lvert \xn{j} \right\rvert^2 \leq \left\lvert \mathbf{X} \right\rvert^2.
    \end{align*}
    The local Lipschitz estimate on $\wm(\mathbf{X})$ in~\eqref{eq:wmean-wcov-local-lip-wellposed} is \cite[Lemma 2.1]{carrillo2018analytical}.
    In order to prove the local Lipschitz continuity of $\wc(\mathbf{X})$, fix $R>0$ and $J>0$.
    As in the proof of \cref{proposition:stability:sqrt-wcov} we obtain
    \begin{align*}
        \norm{
            \sqrt{\wc(\mathbf{X})} -
            \sqrt{\wc (\mathbf{Y}) }}_{\rm F}
         & \le \sqrt{2} \left( \sum_{j=1}^J \myabs*{
            \sqrt{w_j(\mathbf{X})} \left(\xn{j} - \wm(\mathbf{X}) \right)
        - \sqrt{w_j(\mathbf{Y})} \left(\yn{j} - \wm(\mathbf{Y}) \right)}^2 \right)^{\frac{1}{2}} \\
         & \le \sqrt{2}  \sum_{j=1}^J \myabs*{
            \sqrt{w_j(\mathbf{X})} \left(\xn{j} - \wm(\mathbf{X}) \right)
            - \sqrt{w_j(\mathbf{Y})} \left(\yn{j} - \wm(\mathbf{Y}) \right)}.
    \end{align*}
    Using the triangle inequality, we have that
    \begin{align*}
        \frac{1}{\sqrt{2}}
        \norm{
            \sqrt{\wc(\mathbf{X})} -
            \sqrt{\wc (\mathbf{X}) }}_{\rm F}
         & \le  \sum_{j=1}^J
        \sqrt{w_j(\mathbf{X})}  \myabs* {\xn{j} - \wm(\mathbf{X}) - \yn{j} + \wm(\mathbf{Y})}                                                                      \\
         & \qquad + \sum_{j=1}^J  \left\lvert \left( \sqrt{w_j(\mathbf{X})} -  \sqrt{w_j(\mathbf{Y})} \right) \left(\yn{j} - \wm(\mathbf{Y}) \right) \right\rvert.
    \end{align*}
    Since $\mathbf{X}\mapsto \wm(\mathbf{X})$ is a locally Lipschitz continuous function by \cite[Lemma 2.1]{carrillo2018analytical},
    it suffices to prove that  $\mathbf{X} \mapsto \sqrt{w_j(\mathbf{X})}$ is locally Lipschitz as well.
    To this end, observe that
    \begin{align*}
        \sqrt{w_j(\mathbf{X})}
        - \sqrt{w_j(\mathbf{Y})}
        =
        \frac{w_j(\mathbf{X})
            - w_j(\mathbf{Y})}
        {
            \sqrt{w_j(\mathbf{X})}
            + \sqrt{w_j(\mathbf{Y})}
        }.
    \end{align*}
    The denominator can be controlled by noting that
    \[
        \forall \lvert \mathbf{X} \rvert \leq R, \qquad
        w_j(\mathbf{X})
        = \frac{\e^{-\beta f\left(\xn{j} \right)}}{\sum_{k=1}^{J} \e^{-\beta f\left(\xn{k} \right) }}
        \geq \frac{1}{J} \exp \left( - \beta \left( \max_{\myabs{x} \leq R} f(x) - \min_{\myabs{x} \leq R} f(x) \right) \right).
    \]
    Therefore, there exists a constant $c_{R,J}>0$ such that $\sqrt{w_j(\mathbf{X})} \ge c_{R,J}$ for all $\myabs{\mathbf{X}} \le R$.
    Since $\mathbf{X} \mapsto  {w_j(\mathbf{X})}$ is locally Lipschitz continuous as a composition of locally Lipschitz continuous functions,
    the claim follows.
\end{proof}

\begin{proof}[Proof of \cref{thm:cbo-cbs-particles-well-posed}]
    We prove only the claim for the CBS dynamics~\eqref{eq:cbs-particles},
    since the proof for CBO is similar.
    We write the system \eqref{eq:cbs-particles} in the form
    \[
        \d \mathbf{X}_t = F(\mathbf{X}_t) \, \d t + G(\mathbf{X}_t) \, \d \mathbf{ \wien}_t
    \]
    where $\mathbf{X}_t \coloneq \Bigl(\xn{1}_t, \dots, \xn{J}_t\Bigr)$.
    The drift and diffusion coefficients are given by
    \begin{align*}
        F(\mathbf{X}_t) \coloneq -
        \begin{pmatrix}
            \xn{1}_t - \wm \left(\emp{J}_t\right) \\
            \vdots                                \\
            \xn{J}_t - \wm \left(\emp{J}_t\right)
        \end{pmatrix} \in \R^{d J},
    \end{align*}
    and
    \begin{align*}
        G(\mathbf{X}_t) \coloneq &
        \sqrt{2\lambda^{-1}}
        \operatorname{diag}\left(
        \sqrt{\wc\left(\emp{J}_t\right)}, \dots, \sqrt{\wc\left(\emp{J}_t\right)}\right)\in \R^{d J \times d J}.
    \end{align*}
    By \cref{lemma:cbs-microscopic-loc-lip},
    the functions $F$ and $G$ have sublinear growth and are locally Lipschitz continuous.
    Thus, by the non-explosion criterion from stochastic Lyapunov theory  \cite[Theorem 3.5]{khasminskii2013stochastic},
    it suffices to find a function~$\phi \in C^2\left(\R^{d J}, [0,\infty)\right)$ such that
    \begin{itemize}
        \item The function $\phi$ is coercive: $\phi(\mathbf{X}) \to \infty$ as $|\mathbf{X}|\to\infty$ and
        \item There exists $c>0$ such that
              \[
                  \forall \mathbf{X}\in\R^{d J}, \qquad
                  \mathcal{L}\phi(\mathbf{X}) \coloneq F(\mathbf{X}) \cdot \nabla \phi(\mathbf{X}) + \frac{1}{2} \mytrace\left(G^\t(\mathbf{X}) \hessian \phi(\mathbf{X}) G(X)\right)  \le c \phi(\mathbf{X}).
              \]
    \end{itemize}
    Take $\phi(\mathbf{X})\coloneq\tfrac{1}{2} |\mathbf{X}|^2$.
    Then
    \begin{align*}
        \mathcal{L}\phi(\mathbf{X})
         & = - \sum_{j=1}^{J} \left(\xn{j} - \wm \left(\emp{J}_t\right)\right) \cdot \xn{j} + J  \lambda^{-1}\mytrace \Bigl( \wc \left(\emp{J}_t\right) \Bigr)                                            \\
         & = - \sum_{j=1}^{J} \left(\xn{j} - \wm \left(\emp{J}_t\right)\right) \cdot \xn{j} + \lambda^{-1} \sum_{j=1}^{J} w_j(\mathbf{X}) \left\lvert \xn{j} - \wm \left(\emp{J}_t\right) \right\rvert^2.
    \end{align*}
    From \eqref{eq:wmean-wcov-emp-bound} it follows that this is a valid choice,
    concluding the proof.
\end{proof}

\subsection{Well-posedness for the mean-field SDEs}

We adapt the argument from \cite{carrillo2018analytical},
using the more general \cref{assump:main} and the improved stability estimates from \cref{sub:stability_estimates}.
We first prove \cref{thm:cbs-mean-field-sde-well-posed},
and then comment on the necessary changes for proving \cref{thm:cbo-mean-field-sde-well-posed} in \cref{rmk:cbo-wellp-proof}. In the following, we endow the vector space $\mathcal X \coloneq \R^d \times \R^{d \times d}$ with the norm
\[
    \norm{(m,\Gamma)} \coloneq \left({|m|^2 + \norm{\Gamma}^2_{\rm F}}\right)^{\frac{1}{2}}.
\]
The Hölder space $ C^{0,\frac{1}{2}}\bigl([0, T], \mathcal X \bigr)$ is given by
\[
    C^{0,\frac{1}{2}}\bigl([0, T], \mathcal X \bigr) \coloneq
    \left\{ \varphi\colon [0, T]\rightarrow \mathcal X  : \norm{\varphi}_{C^{0,\frac{1}{2}}} < \infty \right\},
\]
where
\begin{align*}
    \norm{\varphi}_{C^{0,\frac{1}{2}}}\coloneq \sup_{t\in [0,T]} \bigl\lVert  \varphi(t) \bigr\rVert  + [\varphi]_{C^{0,\frac{1}{2}}} < \infty
    \qquad \text{and} \qquad [\varphi]_{C^{0,\frac{1}{2}}} \coloneq \sup_{0 \leq s < t \leq T} \frac{ \bigl\lVert \varphi(t)- \varphi(s) \bigr\rVert}{|t-s|^{\frac{1}{2}}}.
\end{align*}
In order to ensure that certain matrices are positive semidefinite,
we use the map $\mathcal{R}\colon \R^{d\times d} \rightarrow \R^{d\times d}$ defined by $\mathcal R(\Gamma) = \sqrt{\Gamma \Gamma^\t}$.
We will make use of the following auxiliary lemma.

\begin{lemma}\label{lem:cont-moments}
    For $f$ and $\mfldis_0$ as in \cref{thm:cbs-mean-field-sde-well-posed}, fix $(m,\Gamma) \in C([0,T], \mathcal X)$ and $y_0 \sim \mfldis_0$. Then the stochastic differential equation
    \begin{align}
        \label{eq:cbs-existence-SDE-for-u-and-G}
        \d Y_t = -(Y_t - m_t) \, \d t + \sqrt{2\lambda^{-1} \mathcal R (\Gamma_t)} \, \d W_t,
        \qquad Y_0 = y_0\,,
    \end{align}
    admits a unique continuous strong solution~$Y\in C([0,T], \R^d)$.
    Moreover, denoting by $\rho_t\coloneq\Law(Y_t) \in \mathcal{P}(\R^d)$ the pointwise law of~$Y_t$, the functions $t \mapsto \wm(\rho_t)$ and $t \mapsto \wc(\rho_t)$ belong to $C([0, T], \R^d)$ and $C([0, T], \R^{d\times d})$ respectively.
\end{lemma}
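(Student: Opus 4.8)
The plan is to treat~\eqref{eq:cbs-existence-SDE-for-u-and-G} as a classical (non-McKean) It\^o SDE with additive, purely time-dependent noise, to solve it by standard theory, and then to transfer the pathwise regularity of the solution to the regularity of its weighted moments via the stability estimates of~\cref{sub:stability_estimates}.

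For well-posedness, observe that the drift $(t,y)\mapsto-(y-m_t)$ is affine in~$y$ with continuous, hence bounded, coefficients on~$[0,T]$, so it is globally Lipschitz in~$y$ uniformly in~$t$ and of linear growth; the diffusion $t\mapsto\sqrt{2\lambda^{-1}\mathcal R(\Gamma_t)}$ does not depend on~$y$ and is continuous on~$[0,T]$, since $\Gamma\mapsto\mathcal R(\Gamma)=\sqrt{\Gamma\Gamma^\t}$ is Lipschitz by the Araki--Yamagami inequality already used in the proof of~\cref{proposition:stability:sqrt-wcov} and the matrix square root is continuous on the positive semidefinite cone. Classical existence and uniqueness for It\^o SDEs with locally Lipschitz coefficients of at most linear growth (see e.g.~\cite{MR2380366}) then yields a unique, almost surely continuous strong solution $Y\in C([0,T],\R^d)$ with $Y_0=y_0$ independent of~$W$. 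Moreover, since $y_0\sim\mfldis_0\in\mathcal P_p(\R^d)$ with $p\ge2\vee p_{\mathcal C}(s,\ell)\ge2$, the Burkholder--Davis--Gundy inequality combined with Gr\"onwall's lemma, exactly as in the proof of~\cref{lemma:moment-bounds-empirical}, gives $\expect\bigl[\sup_{t\in[0,T]}|Y_t|^p\bigr]<\infty$, so that $R:=\bigl(\sup_{t\in[0,T]}\rho_t[|x|^p]\bigr)^{1/p}<\infty$ and $\rho_t\in\mathcal P_{p,R}(\R^d)$ for every $t\in[0,T]$.

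It then remains to establish Hölder continuity of $t\mapsto\rho_t$ in~$W_p$ and to feed this into the stability estimates. For $0\le s\le t\le T$ we have $Y_t-Y_s=-\int_s^t(Y_u-m_u)\,\d u+\int_s^t\sqrt{2\lambda^{-1}\mathcal R(\Gamma_u)}\,\d W_u$; bounding the drift term in~$L^p$ by $|t-s|\sup_{u\in[0,T]}\bigl(\|Y_u\|_{L^p}+|m_u|\bigr)$ and the stochastic integral via Burkholder--Davis--Gundy together with the boundedness of $u\mapsto\norm{\sqrt{2\lambda^{-1}\mathcal R(\Gamma_u)}}_{\rm F}$ on~$[0,T]$, one obtains $\expect|Y_t-Y_s|^p\le C|t-s|^{p/2}$ and hence $W_p(\rho_t,\rho_s)\le\bigl(\expect|Y_t-Y_s|^p\bigr)^{1/p}\le C|t-s|^{1/2}$. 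Since $\rho_t,\rho_s\in\mathcal P_{p,R}$ and $p\ge p_{\mathcal C}(s,\ell)\ge p_{\mathcal M}(s,\ell)$, the basic stability estimates of~\cref{lemma:stab:wmean-wcov-basic} (parts~(a) and~(b)) apply and give $|\wm(\rho_t)-\wm(\rho_s)|\le L_{\mathcal M}W_p(\rho_t,\rho_s)$ and $\norm{\wc(\rho_t)-\wc(\rho_s)}_{\rm F}\le L_{\mathcal C}W_p(\rho_t,\rho_s)$, so that both $t\mapsto\wm(\rho_t)$ and $t\mapsto\wc(\rho_t)$ are Hölder-$\tfrac12$ continuous on~$[0,T]$, in particular continuous, which is the claim.

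The argument is essentially routine; the only points requiring a little care are the continuity (equivalently, the boundedness on~$[0,T]$) of the diffusion coefficient $t\mapsto\sqrt{2\lambda^{-1}\mathcal R(\Gamma_t)}$, where the matrix square-root estimates resurface, and the bookkeeping needed to check that the exponent~$p$ prescribed by~\cref{thm:cbs-mean-field-sde-well-posed} is large enough for~\cref{lemma:stab:wmean-wcov-basic} to be applicable uniformly over~$\mathcal P_{p,R}$.
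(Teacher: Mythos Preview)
Your argument is correct. The difference from the paper lies in the last step: the paper proves continuity of $t\mapsto\wm(\rho_t)$ and $t\mapsto\wc(\rho_t)$ directly by dominated convergence, writing $\wm(\rho_t)=\expect[Y_t\e^{-\beta f(Y_t)}]/\expect[\e^{-\beta f(Y_t)}]$ and using the a.s.\ continuity of $t\mapsto Y_t$ together with the uniform moment bound~$\expect[\sup_{t\in[0,T]}|Y_t|^p]<\infty$ as a dominating function. You instead establish Hölder-$\tfrac12$ continuity of $t\mapsto\rho_t$ in~$W_p$ and then invoke~\cref{lemma:stab:wmean-wcov-basic}. Your route yields more (Hölder continuity rather than mere continuity) at the price of relying on the stability estimates; in fact this is exactly the argument the paper uses one step later, in Step~2 of the proof of~\cref{thm:cbs-mean-field-sde-well-posed}, to show that $\mathcal T$ maps into $C^{0,\frac12}$. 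So you have effectively merged the lemma with that subsequent step, which is perfectly fine and arguably economises slightly. The paper's dominated-convergence approach, on the other hand, keeps~\cref{lem:cont-moments} logically independent of~\cref{lemma:stab:wmean-wcov-basic}.
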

\begin{proof}
    The existence of a unique continuous strong solution~$Y\in C([0,T], \R^d)$ to~\eqref{eq:cbs-existence-SDE-for-u-and-G} follows from classical theory of stochastic differential equations~\cite[Theorem 5.2.1]{MR2001996}. It remains to prove the continuity statement for the weighted mean and covariance.
    For any~$0 \leq r \leq t \leq T$,
    it holds by the Burkholder--Davis--Gundy inequality that
    \begin{align}
        \notag
        \expect  \left[ \sup_{u \in [r, t]} \left\lvert Y_u - Y_r \right\rvert^{p} \right]
         & \leq \bigl(2(t-r)\bigr)^{p-1} \int_{r}^{t} \expect \bigl\lvert Y_u - m_u \bigr\rvert^{p} \, \d u                                                                                                \\
        \label{eq:bdg_well_posedness}
         & \qquad + C_{\rm BDG} \, 2^{p-1} \left(2\lambda^{-1}\right)^{\frac{p}{2}} (t-r)^{\frac{p}{2}-1} \int_{r}^{t}  \left\lVert \sqrt{\mathcal R\bigl(\Gamma(u)\bigr)} \right\rVert_{\rm F}^p \, \d u.
    \end{align}
    Taking~$r = 0$ in this inequality and rearranging,
    we obtain that
    \[
        \expect \left[ \sup_{u \in [0, t]} \left\lvert Y_u \right\rvert^{p} \right]
        \leq C \left( \expect \left\lvert Y_0 \right\rvert^{p}
        + \lVert m \rVert_{L^{\infty}([0, T])}^{p}
        + \lVert \Gamma \rVert_{L^{\infty}([0, T])}^{\frac{p}{2}}
        + \int_{0}^{t} \expect \myabs{Y_u}^p \, \d u\right),
    \]
    for a constant $C$ depending only on $(T, \lambda, p)$.
    It follows from this inequality, by Gr\"onwall's lemma,
    that
    \begin{equation}
        \label{eq:gronwall_well_posedness}
        \expect \left[ \sup_{t \in [0, T]} |Y_t|^{p} \right]
        \leq C \left( \expect \left\lvert Y_0 \right\rvert^{p}  + \lVert m \rVert_{L^{\infty}([0, T])}^{p} + \lVert \Gamma \rVert_{L^{\infty}([0, T])}^{\frac{p}{2}} \right) \e^{C T}
        < \infty.
    \end{equation}
    Therefore, by dominated convergence,
    we have for any $t \in [0, T]$ that
    \[
        \wm(\rho_r)
        = \frac{\expect \left[Y_r \e^{-\beta f(Y_r)} \right]}{\expect \left[\e^{-\beta f(Y_r)}\right]}
        \xrightarrow[r \to t]{}
        \frac{\expect \left[Y_t \e^{-\beta f(Y_t)} \right]}{\expect \left[\e^{-\beta f(Y_t)}\right]}
        = \wm(\rho_t),
    \]
    implying that the function $t \mapsto \wm(\rho_t)$ belongs to $C([0, T], \R^d)$.
    An analogous reasoning shows that the function $t \mapsto \wc(\rho_t)$ belongs to $C([0, T], \R^{d\times d})$.
\end{proof}

\begin{proof}[Proof of \cref{thm:cbs-mean-field-sde-well-posed}]
    Similarly to the proof of \cite[Theorem 3.1]{carrillo2018analytical},
    we first construct a map
    \[
        \mathcal{T}\colon C\left([0,T], \mathcal X \right) \rightarrow  C\left([0,T], \mathcal X \right)
    \]
    whose fixed points correspond to solutions of \eqref{eq:cbs-mf}.

    \paragraph{Step 1: Constructing the map~$\mathcal T$}
    Consider the map
    \begin{align*}
        \mathcal{T}\colon C([0,T], \mathcal X) & \rightarrow  C([0,T], \mathcal X)             \\
        (m,\Gamma)                             & \mapsto \bigl(\wm(\rho_t), \wc(\rho_t)\bigr),
    \end{align*}
    where $\rho_t \coloneq {\rm Law}(Y_t)$ and $(Y_t)_{t \in [0, T]}$ is the unique solution to~\eqref{eq:cbs-existence-SDE-for-u-and-G}.
    Thanks to Lemma~\ref{lem:cont-moments}, the map $\mathcal{T}$ is well-defined.
    Fixed points of $\mathcal{T}$ correspond to solutions of the McKean-Vlasov SDE~\eqref{eq:cbs-mf}.
    The existence of fixed points follows from applying the Leray-Schauder Fixed Point Theorem \protect{\cite[Chapter 11]{gilbarg1977elliptic}} in the space~$C\left([0,T], \mathcal X \right)$,
    once we have proved that $\mathcal T$ is compact and that the following set is bounded:
    \begin{align}
        \label{eq:leray-schauder-set}
        \Bigl\{(m,\Gamma) \in C\bigl([0,T], \mathcal X\bigr) :  \exists \xi\in[0,1]\text{ such that } (m,\Gamma)=\xi \mathcal{T}\bigl((m,\Gamma)\bigr)\Bigr\}.
    \end{align}

    \paragraph{Step 2: Showing that~$\mathcal T$ is compact}
    In order to prove that $\mathcal{T}$ is a compact operator,
    fix $R>0$ and consider the ball
    \[
        B_R \coloneq \left\{ (m,\Gamma)\in  C([0,T], \mathcal X) : \norm{(m,\Gamma)}_{L^\infty([0,T])} \le  R \right\}.
    \]
    By the Arzelà–Ascoli theorem, we have a compact embedding
    \begin{align*}
        C^{0,\frac{1}{2}}\left([0,T], \mathcal X\right) \hookrightarrow C\left([0,T], \mathcal X\right).
    \end{align*}
    Thus, it suffices to show that $\mathcal{T}(B_R)$ is bounded in $C^{0,\frac{1}{2}}\left([0,T], \mathcal X\right)$.
    To this end, take $(m,\Gamma) \in B_R$
    and consider the corresponding solution $Y_t$ of \eqref{eq:cbs-existence-SDE-for-u-and-G} with pointwise law~$\rho_t$,
    and let~$\varphi = \mathcal T(m, \Gamma)$.
    We first observe that
    \[
        \bigl\lVert  \varphi(0)  \bigr\rVert^2
        =
        \bigl\lvert \wm(\mfldis_0) \bigr\rvert^2 + \bigl\lVert \wc(\mfldis_0) \bigr\rVert_{\rm F}^2
        < \infty.
    \]
    The right-hand side is independent of $m$ and $\Gamma$,
    and it is indeed finite by~\cref{lemma:bound-on-weighted-moment}.
    Given that
    \[
        \norm{\varphi}_{C^{0,\frac{1}{2}}}
        \leq  \left\lVert \varphi(0) \right\rVert  + [\varphi]_{C^{0,\frac{1}{2}}} \bigl(\sqrt{T} + 1 \bigr),
    \]
    it only remains to show that~$[\varphi]_{C^{0,\frac{1}{2}}}$ is bounded by a finite constant depending on $(m,\Gamma)$ only through~$R$.
    In view of \eqref{eq:bdg_well_posedness} and \eqref{eq:gronwall_well_posedness},
    there is a constant $L = L(R)$ such that
    \[
        \expect  \left\lvert Y_t - Y_r \right\rvert^{p}  \leq L \myabs{t - r}^{\frac{p}{2}}.
    \]
    It follows that $\wasserstein_p(\rho_t, \rho_r) \leq L^{\frac{1}{p}} \myabs{t - r}^{\frac{1}{2}}$,
    which by~\cref{lemma:stab:wmean-wcov-basic} implies
    the H\"older continuity with exponent~$\frac{1}{2}$ of the functions~$t \mapsto \wm(\rho_t)$ and $t \mapsto \wc(\rho_t)$.
    This completes the proof that $\mathcal{T}$ is compact.

    \paragraph{Step 3: Showing that the set~\eqref{eq:leray-schauder-set} is bounded}
    To this end, let $(m, \Gamma)\in  C([0,T], \mathcal X)$ be such that
    \begin{equation}
        \label{eq:assumption_boundedness}
        (m,\Gamma)= \xi \mathcal{T}(m,\Gamma)
    \end{equation}
    for some $\xi\in[0,1]$,
    and let $(Y_t)$ denote the corresponding solution to~\eqref{eq:cbs-existence-SDE-for-u-and-G}.
    By \eqref{eq:assumption_boundedness},
    the stochastic process~$(Y_t)$ is also a solution to
    \[
        \d Y_t = -\Bigl(Y_t - \xi \wm(\rho_t) \Bigr) \, \d t + \sqrt{2\lambda^{-1} \xi \wc (\rho_t)} \, \d W_t,
        \qquad \rho_t \coloneq {\rm Law}(Y_t).
    \]
    But then, reasoning as in~\cref{sub:moment} using a Gr\"onwall-type argument and \cref{lemma:bound-on-weighted-moment} with $u=\ell$,
    we find that
    \[
        \expect \left[ \sup_{t \in [0, T]} \bigl\lvert Y_t \bigr\rvert^{p}  \right]
        \leq C \expect \lvert Y_0 \rvert^p = C \int_{\R^d} \myabs{x}^p \, \mfldis_0(\d x).
    \]
    By \cref{lemma:bound-on-weighted-moment},
    it follows that $\wm(\rho_t)$ and $\wc(\rho_t)$
    can be bounded uniformly in $[0, T]$ in terms of~$\mfldis_0$,
    implying that the set~\eqref{eq:leray-schauder-set} is indeed bounded.
    Thus, we established the existence of a fixed point of~$\mathcal T$,
    and we obtain as a byproduct the moment bounds in~\eqref{eq:cbs-moment-bound-mfl}.

    \paragraph{Step 4: Showing uniqueness}
    For uniqueness,
    let $(m_t, \Gamma_t)$ and $(\widehat{m}_t, \widehat{\Gamma}_t)$ be two fixed points of $\mathcal{T}$ with corresponding solutions~$Y_t, \widehat{Y}_t \in C([0,T], \mathcal X)$ of \eqref{eq:cbs-existence-SDE-for-u-and-G} and let $z_t\coloneq Y_t - \widehat{Y}_t$.
    By definition of $\mathcal{T}$ and since~$\wc(\mu)$ is symmetric and positive semidefinite for all~$\mu\in\mathcal{P}_2(\R^d)$,
    we have $\mathcal{R}\left(\Gamma_t\right) = \Gamma_t$ and $\mathcal{R}\bigl(\widehat{\Gamma}_t\bigr) = \widehat{\Gamma}_t$ for all~$t\in[0,T]$.
    Let~$\rho_t, \widehat \rho_t \in \mathcal P(\R^d)$ denote the marginal laws of $Y_t$ and $\widehat Y_t$,
    respectively.
    By the Burkholder--Davis--Gundy inequality,
    we have for all $t \in [0, T]$
    \begin{align*}
        \frac{1}{2^{p-1}}\expect \left[
        \sup_{r \in [0, t]} |z_r|^{p} \right]
         & \leq T^{p-1} \int_{0}^{t} \expect  \left\lvert Y_r - \widehat Y_r - \wm(\rho_r) + \wm(\widehat \rho_r) \right\rvert^p \, \d r                                                \\
         & \qquad + C_{\rm BDG} T^{\frac{p}{2}-1} (2 \lambda^{-1})^{\frac{p}{2}}\int_{0}^t \left\lVert \sqrt{\wc(\rho_r)} - \sqrt{\wc(\widehat \rho_r)} \right\rVert_{\rm F}^p \, \d r.
    \end{align*}
    By \cref{prop:cbs-minimal-eigenvalue-wcov},
    we have that
    \[
        \eta\coloneq\inf\left\{\lambda_{\rm min} \Bigl(\wc(\rho_t)\Bigr): t\in [0,T]\right\} >0.
    \]
    By virtue of the van Hemmen--Ando inequality \cite[Proposition 3.2]{vanHemmen1980},
    it holds for any unitarily invariant norm,
    thus in particular for the Frobenius norm, that
    \[
        \left\lVert \sqrt{\wc(\rho_r)} - \sqrt{\wc(\widehat \rho_r)}  \right\rVert_{\rm F}
        \leq \eta^{-\frac{1}{2}} \left\lVert \wc(\rho_r) - \wc(\widehat \rho_r)  \right\rVert_{\rm F}\,.
    \]
    Using \cref{lemma:stab:wmean-wcov-basic} and noting that $\wasserstein_p(\rho_r, \widehat \rho_r)^p \leq \expect \lvert z_r \rvert^p$ by definition of the Wasserstein distance,
    we conclude that
    \[
        \expect \left[ \sup_{r \in [0, t]} \bigl\lvert z_r \bigr\rvert^{p}  \right]
        \leq C \int_{0}^{t} \expect \left[ \sup_{u \in [0, r]} \bigl\lvert z_u \bigr\rvert^{p}  \right] \, \d r.
    \]
    By Gr\"onwall's lemma, and given that $z_0 = 0$,
    it follows that
    \[
        \expect \left[ \sup_{t \in [0, T]} \lvert z_t \rvert^p  \right] = 0,
    \]
    which concludes the proof of uniqueness.

    \paragraph{Step 5: Showing that $\mfldis_t$ solves the nonlocal Fokker--Planck equation \eqref{eq:cbs:FKP}}
    This part of the proof is standard,
    and we present only a formal derivation.
    Let $(m, \Gamma) \in C([0, T], \mathcal X)$  denote the fixed point of~$\mathcal T$,
    in which case $\mathcal R(\Gamma_t) = \Gamma_t$,
    and let $\overline Y_t$ denote the corresponding solution to~\eqref{eq:cbs-existence-SDE-for-u-and-G}.
    Denote also by~$\mfldis_t$ the law of~$\overline Y_t$.
    For a smooth function $\phi\colon \R^d \to \R$ with compact support,
    it holds by It\^o's lemma that
    \begin{equation}
        \label{eq:ito}
        \expect \phi\left(\overline Y_t\right) - \expect \phi\left(\overline Y_r\right)
        =  \expect \left[ \int_{r}^{t} \mathcal L_u \phi\left(\overline Y_u\right) \, \d u \right],
    \end{equation}
    where
    \(
    \mathcal L_u \phi(y) \coloneq -\bigl(y - \wm\left(\mfldis_u\right)\bigr) \cdot \nabla \phi(y) + \lambda^{-1} \wc\left(\mfldis_u\right) : \hessian \phi(y).
    \)
    It follows from~\eqref{eq:ito} that~$\mfldis_t$ satisfies,
    for all smooth compactly supported $\phi\colon \R^d \to \R$,
    \begin{equation}
        \label{eq:fokker_planck}
        \frac{\d}{\d t} \int_{\R^d} \phi(x) \, \mfldis_t(\d x)
        = \int_{\R^d} \mathcal L_t \phi \, \mfldis_t(\d x),
    \end{equation}
    which is indeed a weak formulation of the Fokker--Planck equation~\eqref{eq:CBO:FKP}.
    For rigorous uniqueness and existence results for the Fokker--Planck equation~\eqref{eq:fokker_planck}
    as well as a more precise discussion of its connection with~\eqref{eq:cbs-existence-SDE-for-u-and-G},
    see for example~\cite{MR2830391}.
\end{proof}

\begin{remark}[Proof of \cref{thm:cbo-mean-field-sde-well-posed}]
    \label{rmk:cbo-wellp-proof}
    Observe that the drift coefficients of CBO and CBS are identical and that the diffusion coefficient of CBO is a globally Lipschitz continuous function of its drift coefficient.
    By, among other things, replacing $\sqrt{2\lambda^{-1} \mathcal R (\Gamma_t)}$ in~\eqref{eq:cbs-existence-SDE-for-u-and-G} by $ \cbodifffunc\left(\xl_t - \wm( \mfldis_t) \right)$,
    the above proof can be easily adapted to provide a proof for~\cref{thm:cbo-mean-field-sde-well-posed}. See also  \cite{carrillo2018analytical}, where a weaker version of \cref{thm:cbo-mean-field-sde-well-posed} was already proven in the case where~$p=4$ and~$f \in \mathcal A(1, 0, 0) \cup \mathcal A(1, 2, 2)$.
\end{remark}

\begin{remark}
    Note that in case of $p\ge 2p_{\mathcal M}$, we can drop the requirement of $\mathcal{C}(\mfldis_0)$ to be positive definite by using \cref{proposition:stability:sqrt-wcov} instead of \cref{lemma:stab:wmean-wcov-basic}.
\end{remark}

\section{Extensions and perspectives for future work}
\label{sec:conclusion}

This section is organized as follows.
First, in \cref{sub:stopping_times},
we present an alternative approach based on stopping times for proving our main results~\cref{thm:mfl-cbo,thm:mfl-cbs}.
Then, in~\cref{sub:different_l_u},
we show how the stopping time approach presented in~\cref{sub:stopping_times} can be employed in order to obtain a mean-field result in the setting where~$f \in \mathcal A(s, \ell, u)$ with $u > \ell > 0$,
in which the improved stability estimates in~\cref{lemma:stab:improved} are no longer valid.

\subsection{Proving mean-field results using stopping times}
\label{sub:stopping_times}

In this section, we present an alternative approach for proving~\cref{thm:mfl-cbo} and~\cref{thm:mfl-cbs}.
This approach is based on stopping times,
and inspired by a method employed in~\cite{MR1949404} to handle local Lipschitz continuity issues in the context of numerical analysis for the Euler--Maruyama method.
A similar approach was employed in~\cite{MR4553241} to obtain a non-quantitative mean-field estimate,
and more recently in~\cite{vaes2024sharppropagationchaosensemble} to prove sharp propagation of chaos estimates for an ensemble sampler related to the ensemble Kalman filter~\cite{EKS-1-Hofmann-Stuart-Li-GI-2019,MR4123680}.
Before presenting the proof,
we briefly discuss the main differences between this approach and the indicator set approach presented in~\cref{sec:main}.
\begin{itemize}
    \item
          The main advantage of the approach used in~\cref{sec:main} is that the event $\Omega_{J,t}$ for which our stability estimates do not apply
          can be time-dependent.
          For the proof of \cref{thm:mfl-cbo},
          this flexibility is in fact not necessary,
          as we could have employed instead of~$\Omega_{J,t}$ in~\eqref{eq:cbo-bad-set} the event
          \begin{equation}
              \label{eq:event_stopping_time}
              \Omega_{J} := \left\{ \omega \in \Omega : \frac{1}{J} \sup_{t \in [0, T]}\sum_{j=1}^{J}  \myabs*{\xnl{j}_t(\omega)}^p \geq R \right\}.
          \end{equation}
          In certain contexts, however,
          being able to exclude a time-dependent event could be very useful.

    \item
          The stopping time approach presented hereafter does not enjoy this flexibility,
          but is convenient in settings where moments of both $\mu^J$ and $\overline \mu^J$ need to be controlled in order to apply the stability estimates obtained in~\cref{lemma:stab:wmean-wcov-basic} for the weighted moments.
          This is illustrated in~\cref{sub:different_l_u}.
\end{itemize}

We illustrate the stopping times approach only for~\cref{thm:mfl-cbs},
noting that~\cref{thm:mfl-cbo} can be proved in exactly the same manner.
Furthermore, we assume for simplicity that~$\mfldis_0$ has infinitely many moments.
\begin{proof}
    [Alternative proof of~\cref{thm:mfl-cbs}]
    For $R>0$, let $\theta_J(R)$ be the stopping time
    \begin{equation}
        \label{eq:basic_stopping_time}
        \theta_J(R) := \inf \Bigl\{ t \geq 0 : \frac{1}{J} \sum_{j=1}^{J} \left\lvert \xnl{j}_t \right\rvert^p \geq R \Bigr\},
    \end{equation}
    Note that the event $\{\theta_J \leq T\}$ coincides with the event $\Omega_J$ given in \eqref{eq:event_stopping_time}.
    By Hölder's inequality,
    it holds for all~$q > p$ that
    \begin{align}
        \notag
         & \frac{1}{2^{p-1}} \expect \left[ \sup_{t\in[0,T]} \myabs[\Big]{\xn{j}_t - \xnl{j}_t}^p \right]                                                \\
        \notag
         & \qquad \qquad = \expect \left[ \sup_{t\in[0,T]} \myabs[\Big]{\xn{j}_t - \xnl{j}_t}^p \mathsf 1_{\theta_J(R) > T} \right]
        + \expect \left[ \sup_{t\in[0,T]} \myabs[\Big]{\xn{j}_t - \xnl{j}_t}^p \mathsf 1_{\theta_J(R) \leq T} \right]                                    \\
        \label{eq:main_equation_cbs}
         & \qquad\qquad \leq \expect \left[ \sup_{t\in[0,T]} \myabs[\Big]{X_{t \wedge \theta_J}^{(j)} - \overline X_{t \wedge \theta_J}^{(j)}}^p \right]
        + \left(\expect \left[ \sup_{t\in[0,T]} \myabs[\Big]{\xn{j}_t - \xnl{j}_t}^{q} \right]\right)^{\frac{p}{q}}
        \Bigl( \proba [\theta_J(R) \leq T]\Bigr)^{\frac{q-p}{q}}.
    \end{align}

    \paragraph{Bounding the first term in~\eqref{eq:main_equation_cbs}}
    We again employ an adapted version of Sznitman's argument.
    We have
    \begin{align*}
        \frac{1}{2^{p-1}}
        \left\lvert X_{t \wedge \theta_J}^{(j)} - \overline X_{t \wedge \theta_J}^{(j)} \right\rvert^p
        \leq
         & \left\lvert \int_{0}^{t \wedge \theta_J} b\Bigl(\xn{j}_s, \mu^J_{s}\Bigr) - b\Bigl(\xnl{j}_s, \mfldis_{s} \Bigr) \, \d s \right\rvert^p                    \\
         & +  \left\lvert \int_{0}^{t \wedge \theta_J} \sigma\Bigl(X_{s}^{(j)}, \mu^J_{s}\Bigr) - \sigma\Bigl(\xnl{j}_s, \mfldis_{s} \Bigr) \, \d W_s \right\rvert^p.
    \end{align*}
    Let us introduce the martingale
    \[
        M_t = \int_{0}^{t} \sigma\Bigl(X_{s}^{(j)}, \mu^J_{s}\Bigr) - \sigma\Bigl(\xnl{j}_s, \mfldis_{s} \Bigr) \, \d W_s.
    \]
    By Doob's optional stopping theorem~\cite[Theorem 3.3]{MR1949404},
    see also~\cite[Equation 2.29, p.285]{MR0838085},
    the stopped process $(M_{t \wedge \theta_J})_{t\geq 0}$ is a martingale,
    with a quadratic variation process given by the stopped quadratic variation process of $M_t$,
    i.e.\ by $(\langle M \rangle_{t \wedge \theta_J})_{t \geq 0}$.
    Therefore, by the Burkholder--Davis--Gundy inequality,
    we have for all $t \in [0, T]$ that
    \begin{align}
        \notag
        \expect \left[ \sup_{s \in [0,t]} \bigl\lvert \xn{j}_{s \wedge \theta_J} - \xnl{j}_{s \wedge \theta_J} \bigr\rvert^p \right]
         & \leq (2T)^{p-1} \expect \int_{0}^{t \wedge \theta_J} \left\lvert b\Bigl(\xn{j}_{s}, \mu^J_{s}\Bigr) - b\Bigl(\xnl{j}_{s}, \mfldis_{s} \Bigr) \right\rvert^p \, \d s \\
        \label{eq:main_first_term}
         & \qquad + C_{\rm BDG} 2^{p-1}T^{\frac{p}{2} - 1} \expect \int_{0}^{t \wedge \theta_J}
        \left\lVert \sigma\Bigl(\xn{j}_{s}, \mu^J_{s}\Bigr) - \sigma\Bigl(\xnl{j}_{s}, \mfldis_{s} \Bigr) \right\rVert_{\rm F}^p \, \d s.
    \end{align}
    Applying the triangle inequality gives that
    \begin{align}
        \notag
        \frac{1}{2^{p-1}}\expect \int_{0}^{t \wedge \theta_J} \bigl\lvert b\Bigl(\xn{j}_s, \mu^J_{s}\Bigr) - b\Bigl(\xnl{j}_{s}, \mfldis_{s} \Bigr) \bigr\rvert^p \, \d s
         & \leq \int_{0}^{t} \expect \left\lvert b\Bigl(\xn{j}_{s \wedge \theta_J}, \mu^J_{s \wedge \theta_J}\Bigr)
        - b\Bigl(\xnl{j}_{s \wedge \theta_J}, \overline \mu^J_{s \wedge \theta_J}\Bigr) \right\rvert^p \, \d s                                                 \\
        \label{eq:decomposition_drift_cbs}
         & \qquad + \int_{0}^{t} \expect \left\lvert b\Bigl(\xnl{j}_s, \overline \mu^J_{s}\Bigr) - b\Bigl(\xnl{j}_s, \mfldis_{s}\Bigr) \right\rvert^p \, \d s.
    \end{align}
    Similarly, for the diffusion term,
    we have
    \begin{align}
        \notag
        \frac{1}{2^{p-1}}\expect \int_{0}^{t \wedge \theta_J} \left\lVert \sigma\Bigl(\xn{j}_s, \mu^J_{s}\Bigr)
        - \sigma\Bigl(\xnl{j}_s, \mfldis_{s} \Bigr) \right\rVert_{\rm F}^p \, \d s
         & \leq \int_{0}^{t} \expect \left\lVert \sigma\Bigl(\xn{j}_{s\wedge \theta_J}, \mu^J_{s \wedge \theta_J}\Bigr)
        - \sigma\Bigl(\xnl{j}_{s\wedge \theta_J}, \overline \mu^J_{s \wedge \theta_J}\Bigr) \right\rVert_{\rm F}^p \, \d s \\
        \label{eq:decomposition_diffusion_cbs}
         & \qquad + \int_{0}^{t} \expect \left\lVert \sigma\Bigl(\xnl{j}_s, \overline \mu^J_{s}\Bigr)
        - \sigma\Bigl(\xnl{j}_s, \mfldis_{s}\Bigr) \right\rVert_{\rm F}^p \, \d s.
    \end{align}
    Next, we need to bound the terms on the right-hand side of~\eqref{eq:decomposition_drift_cbs} and~\eqref{eq:decomposition_diffusion_cbs}.
    Since the drift terms and the diffusion terms are bounded in a similar manner, we only present the bounds for the diffusion terms.
    \begin{itemize}
        \item
              For the first diffusion term,
              we have that
              \begin{align*}
                  \left\lVert \sigma\Bigl(\xn{j}_{s\wedge \theta_J}, \mu^J_{s \wedge \theta_J}\Bigr)
                  - \sigma\Bigl(\xnl{j}_{s\wedge \theta_J}, \overline \mu^J_{s \wedge \theta_J}\Bigr) \right\rVert_{\rm F}^p
                   & = \left(2 \lambda^{-1}\right)^{\frac{p}{2}} \biggl\lVert \sqrt{\wc\left(\mu^J_{s \wedge \theta_J} \right)} - \sqrt{\wc\left(\overline \mu^J_{s \wedge \theta_J} \right) } \biggr\rVert_{\rm F}^p.
              \end{align*}
              Using~\cref{lemma:stab:improved}, together with the fact
              \[
                  \expect \Bigl[\wasserstein_p\bigl(\mu^J_{s \wedge \theta_J}, \overline \mu^J_{s \wedge \theta_J}\bigr)^p\Bigr]
                  \leq \expect \left[ \frac{1}{J}\sum_{k=1}^{J} \left\lvert \xn{k}_{s\wedge \theta_J} - \xnl{k}_{s\wedge \theta_J} \right\rvert^p \right]
                  = \expect \left\lvert \xn{j}_{s\wedge \theta_J} - \xnl{j}_{s\wedge \theta_J} \right\rvert^p,
              \]
              we deduce that
              \[
                  \expect \left\lVert \sigma\Bigl(\xn{j}_{s\wedge \theta_J}, \mu^J_{s \wedge \theta_J}\Bigr)
                  - \sigma\Bigl(\xnl{j}_{s\wedge \theta_J}, \overline \mu^J_{s \wedge \theta_J}\Bigr) \right\rVert_{\rm F}^p
                  \leq C \expect \left\lvert \xn{j}_{s \wedge \theta_J} - \xnl{j}_{s \wedge \theta_J} \right\rvert^p.
              \]
        \item
              For the other diffusion term,
              we use~\cref{lemma:convergence_weighted_covariance_iid}.
              The assumptions of this lemma are satisfied,
              since it holds that $\mfldis_s \in \mathcal P_q(\R^d)$ for all $q > 0$ by~\cref{thm:cbs-mean-field-sde-well-posed},
              and additionally~$\wc(\mfldis_s) \succ 0$ by~\cref{prop:cbs-minimal-eigenvalue-wcov}.
              We deduce that
              \[
                  \expect \left\lVert \sigma\Bigl(\xnl{j}_s, \overline \mu^J_{s}\Bigr)
                  - \sigma\Bigl(\xnl{j}_s, \mfldis_{s}\Bigr) \right\rVert_{\rm F}^p
                  = \left(2\lambda^{-1}\right)^{\frac{p}{2}} \expect \left\lVert \sqrt{\wc \left(\overline \mu^J_{s}\right)} - \sqrt{\wc \left(\mfldis_s \right)} \right\rVert_{\rm F}^p
                  \leq C J^{-\frac{p}{2}}.
              \]
    \end{itemize}
    Combining the bounds on both terms,
    we deduce that
    \[
        \expect \int_{0}^{t \wedge \theta_J} \left\lVert \sigma\Bigl(\xn{j}_s, \mu^J_{s}\Bigr) - \sigma\Bigl(\xnl{j}_s, \mfldis_{s} \Bigr) \right\rVert_{\rm F}^p \, \d s
        \leq
        C J^{-\frac{p}{2}} +
        C \int_{0}^t \expect \left[ \sup_{u \in [0, s]}\left\lvert \xn{j}_{u \wedge \theta_J} - \xnl{j}_{u \wedge \theta_J} \right\rvert^p \right] \, \d s.
    \]
    Applying a similar reasoning for the drift terms,
    we finally obtain from~\eqref{eq:main_first_term} that
    \[
        \expect \left[ \sup_{s\in[0,t]} \myabs[\Big]{\xn{j}_{s\wedge \theta_J} - \xnl{j}_{s\wedge \theta_J}}^p \right]
        \leq C J^{- \frac{p}{2}} +
        C \int_{0}^{t} \expect \left[ \sup_{u\in[0,s]} \Bigl\lvert X_{u \wedge \theta_J}^{(j)} - \overline X_{u \wedge \theta_J}^{(j)} \Bigr\rvert^p \right] \, \d s.
    \]
    By Gr\"onwall's inequality,
    this implies that
    \[
        \expect \left[ \sup_{t\in[0,T]} \Bigl\lvert X_{t \wedge \theta_J}^{(j)} - \overline X_{t \wedge \theta_J}^{(j)} \Bigr\rvert^p \right]
        \leq C J^{- \frac{p}{2}}.
    \]
    \paragraph{Bounding the second term in~\eqref{eq:main_equation_cbs}}
    We have by~\cref{lemma:moment-bounds-empirical} and  \eqref{eq:cbs-moment-bound-mfl}
    that
    \[
        \expect \left[ \sup_{t\in[0,T]} \myabs[\big]{\xn{j}_t - \xnl{j}_t}^{q} \right]
        \leq 2^{q-1} \left( \expect \left[ \sup_{t\in[0,T]} \myabs[\big]{\xn{j}_t}^{q} \right]
        + \expect \left[ \sup_{t\in[0,T]} \myabs[\big]{\xnl{j}_t}^{q} \right] \right)
        \leq 2^{q} \kappa.
    \]
    In addition, by~\cref{lemma:small_set}, it holds for any sufficiently large fixed~$R$,
    that for all~$\alpha > 0$ there is $C_{\alpha}$ such that
    \[
        \proba [\theta_J(R) \leq T]
        = \proba \left[\sup_{t\in[0,T]} \frac{1}{J} \sum_{j=1}^{J} \left\lvert \xnl{j}_t \right\rvert^p \geq R\right]
        \leq C_{\alpha} J^{-\alpha}.
    \]

    \paragraph{Conclusion}
    We finally obtain from~\eqref{eq:main_equation_cbs} that
    \[
        \expect \left[ \sup_{t\in[0,T]} \myabs[\Big]{\xn{j}_t - \xnl{j}_t}^p \right]
        \leq C \left(J^{-\frac{p}{2}} + J^{- \frac{\alpha(q-p)}{q}} \right),
    \]
    which concludes the proof since~$q$ and~$\alpha$ can be taken arbitrarily large.
\end{proof}
\subsection{\texorpdfstring
    {Handling the case where $u > \ell>0$ in \cref{assump:main}}
    {Handling more general growth assumptions}
}
\label{sub:different_l_u}
Our main results stand only when $f \in \mathcal A(s, \ell, u)$ with $u = \ell$.
This is a convenient setting because it then holds by~\cref{lemma:bound-on-weighted-moment} that
the $p$-th moment of the reweighted measure $L_{\beta} \mu$ is controlled by the $p$-th moment of~$\mu$,
which is key in order
to prove well-posedness of the mean-field system,
to obtain moment estimates for empirical measures,
and, most importantly, to improve the Wasserstein stability estimates for the weighted mean and weighted covariance,
see~\cref{lemma:stab:improved}.
In the interest of simplicity, we refrained from stating precise results for the cases $u > \ell = 0$
and $u > \ell > 0$.
We do not address the former setting,
but note that a quantitative mean-field estimate can still be obtained in the latter.
The main ingredients to this end are the following:
\begin{itemize}
    \item
          Moment estimates for the empirical measure and well-posedness of the mean-field equation
          can be obtained by using~\cref{lemma:bound-on-weighted-moment} together with the Bihari--LaSalle inequality instead of Gr\"onwall's inequality,
          using the same reasoning as in~\cref{lemma:moment-bounds-empirical} and~\cref{sec:cbo-cbs-wellp},
          respectively.
          In contrast with the setting $\ell = u$,
          these results hold only up to some finite time $T < T_+$, where $T_+\in(0,\infty)$ is the time at which the bounds given by the Bihari--LaSalle inequality become singular.

    \item
          In order to obtain a mean-field result up to some time $T < T_+$,
          the approach presented in~\cref{sub:stopping_times} can be used,
          but with the stopping time~$\theta_J := \tau_J \wedge \overline \tau_J$,
          where
          \begin{align*}
              \tau_J(R)           & = \inf \Bigl\{ t \geq 0 : \mu_t^J\bigl[\lvert x \rvert^p\bigr] \geq R \Bigr\},           \\
              \overline \tau_J(R) & = \inf \Bigl\{ t \geq 0 : \overline \mu_t^J\bigl[\lvert x \rvert^p\bigr] \geq R \Bigr\}.
          \end{align*}
          Employing this stopping time instead of~\eqref{eq:basic_stopping_time} enables to use the basic stability estimates for the weighted moments of~\cref{lemma:stab:wmean-wcov-basic},
          which are valid even when $\ell < u$,
          instead of the improved stability estimates in~\cref{lemma:stab:improved},
          which require that $\ell = u$.
          It is then simple to obtain a propagation of chaos estimate for the stopped particle system,
          of the form
          \[
              \expect \left[ \sup_{t\in[0,T]} \Bigl\lvert X_{t \wedge \theta_J}^{(j)} - \overline X_{t \wedge \theta_J}^{(j)} \Bigr\rvert^p \right]
              \leq C J^{- \frac{p}{2}}.
          \]
          In order to complete the proof as in~\cref{sub:stopping_times},
          it remains to bound the probability $\proba \left[\theta_J \leq T \right]$,
          which can be achieved by noticing that
          \[
              \proba [\theta_J(R) \leq T]
              =
              \proba \left[\tau_J(R) \leq T < \overline \tau_J(R) \right]
              + \proba \left[\overline \tau_J(R) \leq T\right].
          \]
          Using the almost sure continuity of the solution to the interacting particle system by~\cref{thm:cbo-cbs-particles-well-posed},
          together with the triangle inequality,
          we bound the first probability as follows:
          \begin{align*}
              \proba [\tau_J \leq T < \overline \tau_J]
               & \leq \proba \left[\sup_{t\in[0,T]} \frac{1}{J}\sum_{j=1}^J \Bigl\lvert \xn{j}_{t \wedge \theta_J} \Bigr\rvert^p = R \right]                     \\
               & \leq
              \proba \left[\frac{2^{p-1}}{J}\sum_{j=1}^J \sup_{t\in[0,T]} \Bigl\lvert \xnl{j}_t \Bigr\rvert^p
              + \frac{2^{p-1}}{J}\sum_{j=1}^J \sup_{t\in[0,T]} \Bigl\lvert \xn{j}_{t \wedge \theta_J} - \xnl{j}_{t \wedge \theta_J} \Bigr\rvert^p \geq R \right] \\
               & \leq
              \proba \left[\frac{1}{J}\sum_{j=1}^J \sup_{t\in[0,T]} \Bigl\lvert \xnl{j}_t \Bigr\rvert^p \geq \frac{R}{2^p} \right]
              + \proba \left[ \frac{1}{J} \sum_{j=1}^J \sup_{t\in[0,T]} \Bigl\lvert \xn{j}_{t \wedge \theta_J} - \xnl{j}_{t \wedge \theta_J} \Bigr\rvert^p \geq \frac{R}{2^p} \right],
          \end{align*}
          where we used that $\proba[A +  B \geq k] \leq \proba [A \geq k/2] + \proba [B \geq k/2]$
          for any two real-valued random variables~$A$ and~$B$,
          because $\{A + B \geq k\} \subset \{A \geq k/2\} \cup \{B \geq k/2\}$.
          A similar trick, of bounding the probability of an event associated with a stochastic system
          in terms of a similar event associated with a simpler system and the discrepancy between the two systems,
          is employed in~\cite[Theorem 2.10]{2023arXiv230503637D}.
          The probability $\proba [\theta_J \leq T]$ can then be bounded as follows:
          \begin{align*}
              \proba [\theta_J(R) \leq T]
               & =
              \proba \left[\tau_J(R) \leq T < \overline \tau_J \right]
              + \proba \left[\overline \tau_J(R) \leq T\right]                                                                              \\
               & \leq 2\proba \left[\frac{1}{J}\sum_{j=1}^J \sup_{t\in[0,T]} \Bigl\lvert \xnl{j}_t \Bigr\rvert^p \geq \frac{R}{2^p} \right]
              + \proba \left[ \frac{1}{J}\sum_{j=1}^J \sup_{t\in[0,T]} \Bigl\lvert \xn{j}_{t\wedge \theta_J} - \xnl{j}_{t \wedge \theta_J} \Bigr\rvert^p \geq \frac{R}{2^{p}} \right].
          \end{align*}
          Using Markov's inequality for the second term,
          we then obtain
          \begin{equation}
              \label{eq:auxiliary_bound_proba}
              \proba [\theta_J(R) \leq T]
              \leq 2\proba \left[\frac{1}{J}\sum_{j=1}^J \sup_{t\in[0,T]} \Bigl\lvert \xnl{j}_t \Bigr\rvert^p \geq \frac{R}{2^p} \right]
              + \frac{2^p}{R} \expect \left[ \frac{1}{J}\sum_{j=1}^J \sup_{t\in[0,T]} \Bigl\lvert \xn{j}_{t\wedge \theta_J} - \xnl{j}_{t\wedge \theta_J} \Bigr\rvert^p \right].
          \end{equation}
          By~\cref{lemma:small_set},
          For any~$\alpha > 0$ there exists $C_{\alpha} > 0$ such that
          the first term is bounded from above by~$C_{\alpha} J^{-\alpha}$ for all~$J \in \nat^+$,
          whereas the second term can be bounded by~$CJ^{-\frac{p}{2}}$ as in the proof presented in~\cref{sub:stopping_times}.
          Thus, the probability $\proba [\theta_J(R) \leq T]$ scales as~$J^{-\min\left\{\alpha, \frac{p}{2}\right\}}$,
          instead of~$J^{-\alpha}$ in the case where $u = \ell$.
          As a consequence,
          we obtain a mean-field result of the type
          \[
              \forall J \in \N^+, \qquad
              \expect \left[ \sup_{t\in[0,T]} \left\lvert X_t^{(j)} - \xnl{j}_t \right\rvert^p \right]
              \le C J^{- \frac{p}{2} \left( \frac{q-p}{q} \right)}.
          \]
          Since~$q$ was arbitrary given the assumption that~$\mfldis_0$ has infinitely many moments,
          it follows that for every~$\varepsilon > 0$,
          there exists $C_{\varepsilon}$ such that
          \[
              \forall J \in \N^+, \qquad
              \expect \left[ \sup_{t\in[0,T]} \left\lvert X_t^{(j)} - \xnl{j}_t \right\rvert^p \right]
              \le C_{\varepsilon} J^{- \frac{p}{2} \left( 1 - \varepsilon \right)}.
          \]
          Hence, we recover almost the classical Monte Carlo rate.
          By using the same method as in~\cite{vaes2024sharppropagationchaosensemble},
          it is in fact possible to prove that this statement holds also for~$\varepsilon = 0$,
          but for conciseness we do not present the details here.
\end{itemize}

\appendix
\section{Further technical results}
\label{sec:aux-results}

\subsection{Stability estimates for ratios of integrals}

\begin{lemma}
    \label{lemma:stab-aux}
    For a real finite-dimensional vector space~$\mathcal V$ with norm~$\norm{\placeholder}$,
    let $g \colon \R^d\rightarrow \mathcal V$ and $h\colon \R^d\rightarrow (0,\infty)$ be functions such that the following condition is satisfied for some $\varsigma \geq 0$ and $L>0$:
    \begin{equation}
        \label{item:lem-stab-aux:loc-lip}
        \forall x, y \in \R^d, \qquad
        \bigl\lVert g(x)-g(y) \bigr\rVert \vee \bigl\lvert h(x)-h(y) \bigr\rvert
        \le L \Bigl( 1 + |x| + |y| \Bigr)^{\varsigma} \myabs{x-y}.
    \end{equation}
    Then for all $p\ge \varsigma+1$ and all $R>0$,
    there exist a constant $C=C(p, R)$ such that
    \begin{align*}
        \forall (\mu, \nu) \in \mathcal P_{p,R}(\real^d) \times \mathcal P_{p,R}(\real^d),
        \qquad
        \norm{ \frac{\mu[g]}{\mu[h]} - \frac{\nu[g]}{\nu[h]} }
        \le C \wasserstein_p(\mu, \nu).
        \qquad
    \end{align*}
\end{lemma}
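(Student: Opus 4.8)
The plan is to compare the two ratios by a standard algebraic splitting, reducing the estimate to three ingredients: a uniform lower bound on the denominators over $\mathcal P_{p,R}$, a uniform upper bound on the numerators, and Lipschitz-type control of the maps $\mu \mapsto \mu[g]$ and $\mu \mapsto \mu[h]$ in $W_p$. First I would write
\[
    \frac{\mu[g]}{\mu[h]} - \frac{\nu[g]}{\nu[h]}
    = \frac{\mu[g] - \nu[g]}{\mu[h]}
    + \nu[g] \, \frac{\nu[h] - \mu[h]}{\mu[h]\,\nu[h]},
\]
so that by the triangle inequality
\[
    \norm*{ \frac{\mu[g]}{\mu[h]} - \frac{\nu[g]}{\nu[h]} }
    \le \frac{\norm{\mu[g] - \nu[g]}}{\mu[h]}
    + \frac{\norm{\nu[g]}\,\bigl\lvert\nu[h] - \mu[h]\bigr\rvert}{\mu[h]\,\nu[h]}.
\]

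For the denominators, condition~\eqref{item:lem-stab-aux:loc-lip} forces $h$ to be continuous, and $h>0$ everywhere, so $h$ attains a strictly positive minimum $m_A := \min_{\myabs{x}\le A} h(x)$ on every closed ball. Taking $A := 2^{1/p}R$, Markov's inequality gives $\mu\bigl(\{\myabs{x}>A\}\bigr) \le R^p/A^p \le 1/2$ for every $\mu\in\mathcal P_{p,R}$, hence $\mu[h] \ge m_A\,\mu\bigl(\{\myabs{x}\le A\}\bigr) \ge m_A/2 =: c>0$, uniformly in $\mu\in\mathcal P_{p,R}$. For the numerator, \eqref{item:lem-stab-aux:loc-lip} with $y=0$ yields $\norm{g(x)} \le \norm{g(0)} + L(1+\myabs{x})^{\varsigma}\myabs{x} \le C\bigl(1+\myabs{x}^{\varsigma+1}\bigr)$, and since $p\ge\varsigma+1$ and $\nu\in\mathcal P_{p,R}$, Jensen's inequality gives $\norm{\nu[g]} \le \nu\bigl[\norm{g}\bigr] \le C\bigl(1+R^{\varsigma+1}\bigr)$, bounded uniformly.

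The heart of the argument is the $W_p$-Lipschitz bound on $\mu \mapsto \mu[g]$. Let $\pi\in\Pi(\mu,\nu)$ be an optimal coupling for $W_p$. Then
\[
    \norm{\mu[g] - \nu[g]}
    \le \iint \norm{g(x) - g(y)}\,\pi(\d x\,\d y)
    \le L \iint (1 + \myabs{x} + \myabs{y})^{\varsigma}\, \myabs{x-y}\,\pi(\d x\,\d y),
\]
and Hölder's inequality with conjugate exponents $q := p/(p-1)$ and $p$ gives
\[
    \iint (1 + \myabs{x} + \myabs{y})^{\varsigma}\, \myabs{x-y}\,\pi(\d x\,\d y)
    \le \left( \iint (1 + \myabs{x} + \myabs{y})^{\varsigma q}\,\pi(\d x\,\d y) \right)^{1/q} W_p(\mu,\nu).
\]
The crucial point is that $p\ge\varsigma+1$ is precisely equivalent to $\varsigma q \le p$, so the first factor is controlled: by convexity $(1+\myabs{x}+\myabs{y})^{\varsigma q} \le 3^{\varsigma q}\bigl(1+\myabs{x}^{\varsigma q}+\myabs{y}^{\varsigma q}\bigr)$, and $\int \myabs{x}^{\varsigma q}\,\mu(\d x) \le \bigl(\int\myabs{x}^p\,\mu(\d x)\bigr)^{\varsigma q/p} \le R^{\varsigma q}$ for $\mu\in\mathcal P_{p,R}$ (and likewise for $\nu$). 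Hence $\norm{\mu[g] - \nu[g]} \le C(p,R)\,W_p(\mu,\nu)$, and the identical computation with $h$ in place of $g$ and $\myabs{h(x)-h(y)}$ in place of $\norm{g(x)-g(y)}$ gives $\myabs{\mu[h] - \nu[h]} \le C(p,R)\,W_p(\mu,\nu)$. Substituting the three bounds into the displayed triangle inequality yields the claim, with a constant depending only on $p$ and $R$ (and on the fixed data $g,h,L,\varsigma$).

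I expect the only genuinely delicate point to be the uniform positive lower bound on $\mu[h]$: it is exactly what forces the use of the moment constraint $\mathcal P_{p,R}$ rather than merely $\mathcal P_p$, combined with continuity and positivity of $h$ through a Markov-inequality truncation; the remaining steps are the Sznitman-style Hölder estimate, where one only has to check that the exponent bookkeeping closes precisely at the threshold $p \ge \varsigma+1$.
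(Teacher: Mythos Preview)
Your proof is correct and follows essentially the same approach as the paper: the same algebraic splitting of the ratio, the same Markov-inequality argument (with the identical choice $A=2^{1/p}R$) for the uniform lower bound on $\mu[h]$, the same growth bound on $g$ from the local Lipschitz condition, and the same H\"older estimate with the key exponent check $\varsigma p/(p-1)\le p \Leftrightarrow p\ge\varsigma+1$. The only cosmetic difference is that the paper works with an arbitrary coupling and infimizes at the end, whereas you fix an optimal coupling from the start.
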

\begin{proof} Fix $p\ge \varsigma+1$ and $R>0$.
    \paragraph{Step 1}
    Let us first generalize \cite[Lemma 3.1]{carrillo2018analytical} by showing that there exists a constant $\eta = \eta(p, R) > 0$ such that
    \begin{equation}
        \label{eq:lem-stab-aux:denom-pos}
        \forall \mu \in \mathcal P_{p,R}(\R^d),
        \qquad
        \int_{\R^d} h \, \d \mu \geq \eta.
    \end{equation}
    Define $B_K \coloneq \{ x \in \R^d : \myabs{x} \leq K \}$ where $K\coloneq 2^{1/p}R$.
    By Markov's inequality, we have
    \[
        \forall \mu \in \mathcal P_{p,R}(\R^d),
        \qquad
        \mu \left(B_K\right) \ge 1- \mu\left(\R^d \setminus B_K\right) \ge 1- \frac{R^p}{K^p} = \frac{1}{2}.
    \]
    Therefore we obtain
    \[
        \forall \mu \in \mathcal P_{p,R}(\R^d), \qquad
        \int_{\R^d} h \, \d \mu
        \geq \int_{B_K} h  \, \d \mu
        \geq \frac{1}{2} \min \bigl\{ h (x) : x \in B_K \bigr\}  =: \eta>0,
    \]
    since $h$ is continuous by \cref{item:lem-stab-aux:loc-lip}.
    This proves~\eqref{eq:lem-stab-aux:denom-pos}.
    \paragraph{Step 2}
    Fix probability measures $\mu, \nu \in \mathcal P_{p, R}$.
    We omit the integration domains for conciseness.
    For any coupling $\pi \in\Pi(\mu, \nu)$,
    we have by \eqref{item:lem-stab-aux:loc-lip} and \eqref{eq:lem-stab-aux:denom-pos} that
    \begin{align*}
        \norm{\frac{\mu[g]}{\mu[h]} -
            \frac{\nu[g]}{\nu[h]}}
         & \le
        \norm{ \frac{\iint g(x)-g(y) \, \pi(\d x \, \d y)}{\mu[h]}}
        +         \norm{\int g \, \d \nu}\cdot\myabs*{ \frac{\iint h(x)-h(y) \, \pi(\d x \, \d y)}{\mu[h] \nu[h]}}
        \\
         & \le L \left( \eta^{-1} + \eta^{-2}\int \norm{g} \, \d \nu  \right)
        \iint \Bigl(1+|x|+|y| \Bigr)^{\varsigma} \myabs{x-y} \, \pi(\d x \, \d y).
    \end{align*}
    By \eqref{item:lem-stab-aux:loc-lip}, there exists $C>0$ such that $\norm{g(x)} \le C \bigl(1+|x|\bigr)^{\varsigma+1}$ for all $x \in \R^d$.
    The claim then follows by first applying Hölder's inequality to obtain the bound
    \[
        \iint \Bigl(1+|x|+|y| \Bigr)^{\varsigma} \myabs{x-y} \, \pi(\d x \, \d y)
        \leq
        \left(\iint \Bigl(1+|x|+|y| \Bigr)^{\frac{\varsigma p}{p-1}} \, \pi(\d x \, \d y)\right)^{\frac{p-1}{p}}
        \left(\iint \lvert x - y \rvert^p \, \pi(\d x \, \d y)\right)^{\frac{1}{p}},
    \]
    noting that~$\frac{\varsigma p}{p - 1} \leq p$ under the assumption that $p \geq \varsigma + 1$,
    and then infimizing over $\pi\in\Pi(\mu, \nu)$.
\end{proof}

\begin{lemma}
    \label{lemma:aux_aux_rt_cov}
    Let $g\colon\R^d\rightarrow \R^{m\times n}$, $h\colon\R^d\rightarrow (0,\infty)$ and $\mathcal S\colon \mathcal{P}(\R^d) \rightarrow \R^n$
    be functions such that the following conditions hold.
    \begin{enumerate}[label=(\alph*)]
        \item \label{item:lem-stab-aux-special:loc-lip}
              There is $\varsigma \geq 0$ and $L > 0$ such that
              \[
                  \forall x, y \in \R^d, \qquad
                  \bigl\lVert g(x)-g(y) \bigr\rVert \vee \bigl\lvert h(x)-h(y) \bigr\rvert
                  \le L \Bigl( 1 + |x| + |y| \Bigr)^{\varsigma} \myabs{x-y}.
              \]

        \item \label{item:lem-stab-aux-special:S-lip}
              The map $\mathcal S\colon \mathcal{P}(\R^d) \rightarrow \R^n$ satisfies
              \[
                  \forall (\mu, \nu) \in \mathcal P_{p,R}(\real^d) \times \mathcal P_{p,R}(\real^d), \qquad
                  \myabs{\mathcal S(\mu)-\mathcal S(\nu)}\le L_{\mathcal S} \wasserstein_p(\mu, \nu).
              \]
    \end{enumerate}
    Then, for all $R > 0$ and $p \geq 2\varsigma +2$,
    there exist a constant~$C > 0$ such that for all~$\mu, \nu \in \mathcal P_{p,R}$
    and all couplings $\pi \in \Pi(\mu, \nu)$, it holds that
    \begin{align}
        \label{eq:rhs_lemma_empirical_measure}
        \left( \iint_{\R^d\times \R^d}
        \norm{ \frac{g(x)\mathcal S(\mu)}{\sqrt{\mu[h]}} -
            \frac{g(y)\mathcal S(\nu)}{\sqrt{\nu[h]}}}^2 \, \pi(\d x \, \d y)
        \right)^{1/2}
        \le C \, \left( \int \myabs{x - y}^{p} \, \pi(\d x \, \d y) \right)^{1/p}.
    \end{align}
\end{lemma}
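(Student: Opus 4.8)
The plan is to reduce the left-hand side of~\eqref{eq:rhs_lemma_empirical_measure} to a three-term telescoping decomposition and to control the $L^2(\pi)$-norm of each term by $\bigl(\iint \myabs{x-y}^p \, \pi(\d x\,\d y)\bigr)^{1/p}$, absorbing the polynomial growth weights into the uniform moment bound $R$ by Hölder's inequality. Before decomposing, I would record three preliminary facts valid on $\mathcal P_{p,R}$. First, since $p \ge 2\varsigma + 2 \ge \varsigma + 1$ and $h$ is continuous by~\cref{item:lem-stab-aux-special:loc-lip}, the argument of Step~1 in the proof of~\cref{lemma:stab-aux} provides a constant $\eta = \eta(p,R) > 0$ with $\mu[h] \wedge \nu[h] \ge \eta$ for all $\mu, \nu \in \mathcal P_{p,R}$, hence $\sqrt{\mu[h]} \wedge \sqrt{\nu[h]} \ge \sqrt{\eta}$. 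Second, since $\delta_0 \in \mathcal P_{p,R}$, the Lipschitz hypothesis~\cref{item:lem-stab-aux-special:S-lip} yields $\myabs{\mathcal S(\mu)} \le \myabs{\mathcal S(\delta_0)} + L_{\mathcal S} R =: C_R$ for all $\mu \in \mathcal P_{p,R}$; this is where the mere Lipschitz assumption on $\mathcal S$ is upgraded to a uniform bound. Third, \cref{item:lem-stab-aux-special:loc-lip} gives the growth bound $\norm{g(y)} \le C(1 + \myabs{y})^{\varsigma + 1}$.

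With these in hand, for a fixed coupling $\pi \in \Pi(\mu, \nu)$ I would split
\[
    \frac{g(x)\mathcal S(\mu)}{\sqrt{\mu[h]}} - \frac{g(y)\mathcal S(\nu)}{\sqrt{\nu[h]}}
    = \frac{\bigl(g(x) - g(y)\bigr)\mathcal S(\mu)}{\sqrt{\mu[h]}}
    + \frac{g(y)\bigl(\mathcal S(\mu) - \mathcal S(\nu)\bigr)}{\sqrt{\mu[h]}}
    + g(y)\mathcal S(\nu)\left( \frac{1}{\sqrt{\mu[h]}} - \frac{1}{\sqrt{\nu[h]}} \right),
\]
and bound the $L^2(\pi)$-norm of each summand. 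For the first summand, \cref{item:lem-stab-aux-special:loc-lip} together with $\myabs{\mathcal S(\mu)} \le C_R$ and $\mu[h] \ge \eta$ gives a pointwise bound by $C (1 + \myabs{x} + \myabs{y})^{\varsigma}\myabs{x-y}$; squaring and applying Hölder with exponents $(p/2, p/(p-2))$ reduces the estimate to controlling $\iint (1 + \myabs{x} + \myabs{y})^{2\varsigma p/(p-2)}\, \pi(\d x\,\d y)$, and the exponent inequality $\tfrac{2\varsigma p}{p-2} \le p$ — which is precisely $p \ge 2\varsigma + 2$ — makes this at most $\iint (1 + \myabs{x} + \myabs{y})^p\, \pi(\d x\,\d y) \le C(1 + R^p)$ (when $p = 2$ the hypothesis forces $\varsigma = 0$, the weights disappear, and this step is trivial). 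For the second summand, $\myabs{\mathcal S(\mu) - \mathcal S(\nu)} \le L_{\mathcal S} W_p(\mu,\nu) \le L_{\mathcal S}\bigl(\iint \myabs{x-y}^p\, \pi(\d x\,\d y)\bigr)^{1/p}$ pulls the Wasserstein cost out directly, and the remaining factor $\int (1 + \myabs{y})^{2(\varsigma+1)}\, \d\nu$ is finite because $2(\varsigma + 1) \le p$. For the third summand, I would use the elementary identity $\tfrac{1}{\sqrt a} - \tfrac{1}{\sqrt b} = \tfrac{b - a}{\sqrt{ab}\,(\sqrt a + \sqrt b)}$ with $a = \mu[h]$, $b = \nu[h]$, both $\ge \eta$, so that the difference is at most $(2\eta^{3/2})^{-1}\myabs{\mu[h] - \nu[h]}$; then $\myabs{\mu[h] - \nu[h]} \le \iint \myabs{h(x) - h(y)}\, \pi(\d x\,\d y) \le C(R)\bigl(\iint \myabs{x-y}^p\, \pi(\d x\,\d y)\bigr)^{1/p}$ by~\cref{item:lem-stab-aux-special:loc-lip} and one more Hölder step (using $\varsigma \le p - 1$), while $\myabs{\mathcal S(\nu)} \le C_R$ and $\int (1 + \myabs{y})^{2(\varsigma+1)}\, \d\nu$ are controlled as before. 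Adding the three bounds by the triangle inequality in $L^2(\pi)$ yields~\eqref{eq:rhs_lemma_empirical_measure}.

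I expect the only real obstacle to be bookkeeping: tracking which Hölder exponents appear in each of the three terms so as to see that the hypothesis actually needed is $p \ge 2\varsigma + 2$, rather than merely $p \ge \varsigma + 1$ as in the scalar \cref{lemma:stab-aux}. The extra factor of two originates entirely in the first summand, where the weight $(1 + \myabs{x} + \myabs{y})^{\varsigma}$ is squared alongside $\myabs{x-y}^2$ before being converted to a $p$-th Wasserstein cost; the second and third summands only require $2(\varsigma + 1) \le p$ and $\varsigma \le p - 1$, which are weaker. A secondary point worth stating explicitly is the use of $\delta_0 \in \mathcal P_{p,R}$, which is what turns the assumed local Lipschitz continuity of $\mathcal S$ into the uniform bound $\myabs{\mathcal S(\mu)} \le C_R$ relied upon throughout.
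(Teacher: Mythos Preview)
Your proposal is correct and follows essentially the same argument as the paper: the same lower bound $\eta$ on $\mu[h]$, the same three-term telescoping of the difference, and the same H\"older step with the exponent condition $\tfrac{2\varsigma p}{p-2}\le p$ to absorb the polynomial weight. The only cosmetic difference is the order in which you telescope (you difference $g$ first and then $\mathcal S$, whereas the paper does $\mathcal S$ first and then $g$), which swaps which marginal appears in the moment factor but changes nothing of substance.
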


\begin{proof}
    Fix $p\ge 2\varsigma +2$ and $R>0$.
    Again, there is a constant $\eta = \eta(R, p) > 0$ such that \eqref{eq:lem-stab-aux:denom-pos} holds.
    From the triangle inequality we obtain
    \begin{align*}
        \left\lVert  \frac{g(x)\mathcal S(\mu)}{\sqrt{\mu[h]}} -
        \frac{g(y)\mathcal S(\nu)}{\sqrt{\nu[h]}} \right\rVert_{L^2(\pi)}
         & \le
        \left\lVert  \frac{g(x)}{\sqrt{\mu[h]}} \Bigl(\mathcal S(\mu)- \mathcal S(\nu)\Bigr) \right\rVert_{L^2(\pi)}
        + \left\lVert  \Bigl(g(x) - g(y)\Bigr) \frac{\mathcal S(\nu)}{\sqrt{\mu[h]}}  \right\rVert_{L^2(\pi)}
        \\
         & \quad + \left\lVert {g(y)\mathcal S(\nu)} \left( \frac{1}{\sqrt{\mu[h]}} -  \frac{1}{\sqrt{\nu[h]}}\right) \right\rVert_{L^2(\pi)},
    \end{align*}
    where, by a slight abuse of notation,
    we wrote $g(x)$ and $g(y)$ in order to refer to the functions~$(x, y) \mapsto g(x)$ and~$(x, y) \mapsto g(y)$,
    respectively.
    In the rest of the proof,
    we denote by~$C$ any constant depending only on the parameters~$(R, L, \varsigma, \eta, p)$.

    \paragraph{Bounding the first term}
    It follows from \cref{item:lem-stab-aux-special:loc-lip} that $\norm{g(x)}^2\le C (1+|x|)^{2\varsigma +2}$ for all $x\in\R^d$, for a constant~$C$.
    Using~\cref{item:lem-stab-aux-special:S-lip}, we obtain the bound
    \begin{align*}
        \left( \iint
        \norm{ \frac{g(x)}{\sqrt{\mu[h]}}
            \bigl(\mathcal S(\mu)- \mathcal S(\nu)\bigr)}^2  \, \pi(\d x \, \d y)\right)^{1/2}
        \le
        \sqrt{\frac{\mu[\lVert g(x) \rVert^2]}{\mu[h]}}
        \bigl\lvert  \mathcal S(\mu)- \mathcal S(\nu) \bigr\rvert
        \le \sqrt{\frac{C}{\eta}}L_{\mathcal S} \wasserstein_p(\mu, \nu).
    \end{align*}

    \paragraph{Bounding the second term}
    For the second term, we use~\cref{item:lem-stab-aux-special:loc-lip} to find
    \[
        \forall x, y \in \R^d, \qquad
        \myabs{g(x)-g(y)}^2 \le L^2\left( 1 + |x|+|y|\right)^{2 \varsigma} \myabs{x-y}^2.
    \]
    Using H\"older's inequality,
    we therefore obtain
    \begin{align*}
        \bigl\lVert g(x) - g(y) \bigr\rVert_{L^2(\pi)}
         & \le L
        \left( \int
        \Bigl( 1+|x|+|y|\Bigr)^{\frac{2\varsigma p}{p-2}} \, \pi(\d x \, \d y)
        \right)^{\frac{p-2}{2p}}
        \left( \int
        \myabs{x - y}^{p} \, \pi(\d x \, \d y)
        \right)^{\frac{1}{p}}
        \\
         & \le
        C
        \left( \int
        \myabs{x - y}^{p} \, \pi(\d x \, \d y)
        \right)^{\frac{1}{p}},
    \end{align*}
    where we used that  $\frac{2\varsigma p}{p-2} \le p$ since $p \geq 2 + 2 \varsigma$.
    In addition, from \cref{item:lem-stab-aux-special:S-lip} we have
    \begin{equation}
        \label{eq:bound_lipschitz_s}
        \forall \mu \in \mathcal P_{p, R}(\real^d), \qquad
        \myabs[\big]{\mathcal S(\mu)}
        \le \myabs[\big]{\mathcal S(\delta_0)} + L_{\mathcal S} \wasserstein_p(\delta_0, \mu)
        \le \myabs[\big]{\mathcal S(\delta_0)} + L_{\mathcal S} R,
    \end{equation}
    and so we conclude that
    \begin{equation}
        \label{eq:second_term_rhs}
        \left\lVert  \Bigl(g(x) - g(y)\Bigr) \frac{\mathcal S(\nu)}{\sqrt{\mu[h]}}  \right\rVert_{L^2(\pi)}
        \leq
        C\left( \int
        \myabs{x - y}^{p} \, \pi(\d x \, \d y)
        \right)^{1/p}.
    \end{equation}

    \paragraph{Bounding the third term}
    Using again~\eqref{eq:bound_lipschitz_s},
    we have
    \begin{align*}
        \left\lVert {g(y)\mathcal S(\nu)} \left( \frac{1}{\sqrt{\mu[h]}} -  \frac{1}{\sqrt{\nu[h]}}\right) \right\rVert_{L^2(\pi)}
        =
        \lVert g \rVert_{L^2(\mu)} \myabs*{\mathcal S(\nu)}
        \left(
        \frac{1}{\sqrt{\mu[h]}}
        -  \frac{1}{\sqrt{\nu[h]}}\right)
        \le C \left\lvert \frac{\sqrt{\mu[h]} - \sqrt{\nu[h]}}{\sqrt{\mu[h]} \sqrt{\nu[h]}} \right\rvert.
    \end{align*}
    Since $\sqrt{a} - \sqrt{b} = (a - b) / (\sqrt{a} + \sqrt{b})$ for all $a, b > 0$,
    we deduce that
    \begin{align*}
        \left\lVert {g(x)\mathcal S(\nu)} \left( \frac{1}{\sqrt{\mu[h]}} -  \frac{1}{\sqrt{\nu[h]}}\right) \right\rVert_{L^2(\pi)}
         & \le C \eta^{-{\frac{3}{2}}} \iint \bigl\lvert h(x) - h(y) \bigr\rvert \, \pi(\d x \, \d y) \\
         & \le C \iint \bigl(1+|x|+|y|\bigr)^{\varsigma }|x-y| \, \pi(\d x \, \d y).
    \end{align*}
    By employing H\"older's inequality, the integral can be bounded by the right-hand side of~\eqref{eq:second_term_rhs} up to multiplication by a constant.

    \paragraph{Conclusion} The claim follows then by combining the bounds on the three terms,
    and noting,
    for the first term,
    that $\wasserstein_p(\mu, \nu)$ is bounded from above by the bracket on the right-hand side of~\eqref{eq:rhs_lemma_empirical_measure}
    by definition of the Wasserstein distance.
\end{proof}

\subsection{Bound on weighted moments}
\label{sub:bound_weighted_moments}

If the function $f$ is bounded from below and above,
it is trivial to show that
\begin{equation*}
    \forall \mu \in \mathcal P_p(\R^d), \qquad
    \frac
    {\int_{\R^d} \myabs{x}^p \, \e^{-\beta f(x)} \, \mu(\d x)}
    {\int_{\R^d} \e^{-\beta f(x)} \, \mu(\d x)}
    \leq
    C \int_{\R^d} \myabs{x}^p \, \mu(\d x)
\end{equation*}
for a constant $C$ depending only on $\beta$ and $f$.
When~$f$ is unbounded,
proving an inequality of this type is more involved.

    To this end, 
    we first state and prove the following auxiliary lemma.

\begin{lemma}
    \label{lemma:auxiliary_auxiliary_lemma}
    For all $p, q, \beta > 0$,
    there is $C = C(p, q, \beta)$ such that
    \begin{equation}
        \label{eq:aux_aux}
        \forall \nu \in \mathcal P_q(\R^+), \qquad
        \frac
        {\int_{\R^+} y^p \, \e^{-\beta y} \, \nu(\d y)}
        {\int_{\R^+} \e^{-\beta y} \, \nu(\d y)}
        \leq
        C \left(1 + \int_{\R^+} y^q \, \nu(\d y) \right)^{\frac{p}{q}},
    \end{equation}
    where $\R^+ \coloneq [0, \infty)$.
    The constant is given by
    \[
        C(q,p,\beta) = \left(1 - \frac{1}{2^q}\right)^{-1}  (p \beta^{-1})^p \e^{2 \beta -p}.
    \]
\end{lemma}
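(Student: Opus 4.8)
Write $M := \int_{\R^+} y^q \, \nu(\d y)$ for the $q$-th moment of $\nu$, and set $r := 2\max\{1, M^{1/q}\}$. The proof will control the numerator and the denominator of the quotient on the left of~\eqref{eq:aux_aux} separately. The two elementary facts to invoke are: (i) by one-variable calculus, $\sup_{y \ge 0} y^p \e^{-\beta y} = (p/\beta)^p \e^{-p}$, attained at $y = p/\beta$, with $y \mapsto y^p\e^{-\beta y}$ increasing on $[0, p/\beta]$ and decreasing on $[p/\beta, \infty)$; and (ii) Markov's inequality applied to the $q$-th power: $\nu([r,\infty)) = \nu(\{y^q \ge r^q\}) \le M/r^q$, and since $r^q = 2^q\max\{1, M\} \ge 2^q M$, this gives $\nu([r,\infty)) \le 2^{-q}$, hence $\nu([0,r)) \ge 1 - 2^{-q}$. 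From (ii) and the pointwise bound $\e^{-\beta y} \ge \e^{-\beta r}$ on $[0,r)$ we obtain the lower bound $\int_{\R^+}\e^{-\beta y}\,\nu(\d y) \ge \e^{-\beta r}\nu([0,r)) \ge (1-2^{-q})\e^{-\beta r}$.

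Next I would split the numerator at $r$. On $[0,r)$ one has $y^p \le r^p$, so $\int_{[0,r)} y^p\e^{-\beta y}\,\nu(\d y) \le r^p \int_{[0,r)} \e^{-\beta y}\,\nu(\d y) \le r^p \int_{\R^+}\e^{-\beta y}\,\nu(\d y)$; after dividing by the denominator this piece contributes at most $r^p = \bigl(2\max\{1,M^{1/q}\}\bigr)^p \le 2^p(1+M)^{p/q}$. On $[r,\infty)$ one bounds the integrand by $\sup_{y \ge r} y^p\e^{-\beta y}$ and uses $\nu([r,\infty)) \le 2^{-q}$ together with the denominator lower bound. When $r \ge p/\beta$ (the generic case, which always occurs once $M$ is large), by unimodality $\sup_{y \ge r} y^p\e^{-\beta y} = r^p\e^{-\beta r}$, the factor $\e^{-\beta r}$ cancels against $\e^{-\beta r}$ in the denominator bound, and this piece is at most $\frac{2^{-q}}{1-2^{-q}}\,r^p$; when $r < p/\beta$ — which forces $p > 2\beta$ — the supremum is instead the global maximum $(p/\beta)^p\e^{-p}$, while the denominator can now be estimated from below by the mass on the fixed interval $[0,2]$, i.e. $\int_{\R^+}\e^{-\beta y}\,\nu(\d y) \ge (1-2^{-q})\e^{-2\beta}$, which produces the factor $e^{2\beta}$ and bounds this piece by $\frac{(p/\beta)^p\e^{2\beta-p}}{1-2^{-q}}$.

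Adding the two contributions and using the elementary inequality $(p/\beta)^p \e^{2\beta-p} \ge 2^p$ — which follows because $\phi(t) := t\,\e^{2/t - 1}$ satisfies $\phi'(t)/\phi(t) = (t-2)/t^2$, so $\phi$ is minimized at $t=2$ with $\phi(2)=2$, whence $(p/\beta)^p\e^{2\beta-p} = \phi(p/\beta)^p \ge 2^p$ — collapses all cases into the single bound $\frac{(p/\beta)^p\e^{2\beta-p}}{1-2^{-q}}(1+M)^{p/q}$, which is exactly~\eqref{eq:aux_aux}. The main obstacle is the bookkeeping around the threshold $r$: it must be chosen large enough (via Markov) so that $[0,r)$ carries mass $\ge 1-2^{-q}$, yet the $\e^{-\beta r}$ that this forces into the denominator estimate has to be cancelled, which is exactly why one distinguishes whether $r$ lies past the peak $p/\beta$ of the numerator's integrand; obtaining the clean constant with the single $\e^{2\beta}$ factor requires a little care precisely in the regime $p > 2\beta$.
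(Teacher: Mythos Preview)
There is a genuine gap in the case $r < p/\beta$ when $M > 1$. Your denominator lower bound there reads
\[
    \int_{\R^+} \e^{-\beta y}\,\nu(\d y) \;\ge\; (1-2^{-q})\,\e^{-2\beta},
\]
which you justify via ``the mass on the fixed interval $[0,2]$''. But Markov's inequality only gives $\nu\bigl([0,r)\bigr) \ge 1-2^{-q}$, and when $M>1$ one has $r = 2M^{1/q} > 2$, so there is no control on $\nu([0,2])$. Concretely, take $\nu = \delta_4$, $q=\beta=1$, $p=10$: then $M=4$, $r=8<10=p/\beta$, yet $\nu([0,2])=0$, and the actual denominator $\e^{-4}\approx 0.018$ is strictly \emph{smaller} than your claimed lower bound $\tfrac12\e^{-2}\approx 0.068$. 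The natural repair---keeping the always-valid bound $(1-2^{-q})\e^{-\beta r}$---yields for the whole ratio at most $(1-2^{-q})^{-1}(p/\beta)^p$ in this sub-case (using $\beta r<p$), but this can be much larger than the target $(1-2^{-q})^{-1}(p/\beta)^p\e^{2\beta-p}(1+M)^{p/q}$: with $p=q=100$, $\beta=1$, $M=2$ one needs $\e^{98}\le 3$, which fails. So your splitting argument, as written, does not deliver the stated constant. (A secondary slip: in the same sub-case you drop the factor $2^{-q}$ from $\nu([r,\infty))\le 2^{-q}$ when stating the tail contribution; keeping it is exactly what makes the ``adding'' step close up to the claimed constant in the $M\le 1$ regime.)

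The paper handles $M>1$ by a different mechanism: it rescales $y = \lambda z$ with $\lambda = M^{1/q}$, which turns the ratio into $\lambda^p$ times the same ratio for a measure with unit $q$-th moment but with tilt parameter $\beta\lambda$ in place of $\beta$; a short monotonicity lemma (the $\e^{-\gamma y}$-weighted mean of any increasing function is non-increasing in $\gamma$) then lets one replace $\beta\lambda$ by $\beta$ and invoke the $M\le 1$ case. Your direct threshold approach does recover the exact constant when $M\le 1$ (and also when $r\ge p/\beta$), and with the repair above it yields \emph{some} constant---namely $(1-2^{-q})^{-1}\max\{(p/\beta)^p,\,2^p\}$---which would suffice for every downstream use in the paper; but it does not establish the specific constant in the lemma without an additional idea such as the rescaling/monotonicity step.
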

\begin{proof}
    Fix~$\nu \in \mathcal P(\R^+)$.
    If the right-hand side of~\eqref{eq:aux_aux} is infinite,
    then the inequality~\eqref{eq:aux_aux} holds with any constant~$C$,
    so we assume from now on that
    \[
        \lambda \coloneq \left( \int_{\R^+} y^q \, \nu(\d y) \right)^{\frac{1}{q}} < \infty.
    \]

    \paragraph{Step 1}
    Suppose first that $\lambda \leq 1$.
    Then, by Markov's inequality,
    it holds that
    \[
        \nu \bigl([0, 2]\bigr)
        = 1 - \int_{\R^+\setminus [0, 2]} \, \nu(\d y)
        \geq 1 - \frac{1}{2^q} \int_{\R^+} y^q \, \nu(\d y)
        \geq 1 - \frac{1}{2^q}.
    \]
    Using this inequality and the bound $y^p \e^{-\beta y} \leq (p \beta^{-1})^p \e^{-p}$,
    which is valid for all $y \in \R^+$ and obtained by maximization of $y \mapsto y^p \e^{-\beta y}$,
    we have that
    \[
        \frac
        {\int_{\R^+} y^p \, \e^{-\beta y} \, \nu(\d y)}
        {\int_{\R^+} \e^{-\beta y} \, \nu(\d y)}
        \leq
        \frac
        {\int_{\R^+} (p \beta^{-1})^p \e^{-p} \, \nu(\d y)}
        {\int_{[0, 2]} \e^{-\beta y} \, \nu(\d y)}
        \leq
        \left(1 - \frac{1}{2^q}\right)^{-1}  (p \beta^{-1})^p \e^{2 \beta -p}.
    \]
    It follows that~\eqref{eq:aux_aux} is satisfied in this case.

    \paragraph{Step 2}
    Assume now that $\lambda \geq 1$,
    and let $\pi$ denote the pushforward of~$\nu$ under the contraction~$x \mapsto \lambda^{-1} x$.
    Then, applying a change of variable,
    using \textbf{Step 3} and using \textbf{Step 1} step,
    we have that
    \begin{align}
        \notag
        \frac
        {\int_{\R^+} y^p \, \e^{- \beta y} \, \nu (\d y)}
        {\int_{\R^+} \e^{- \beta y} \, \nu (\d y)}
         & = \lambda^p
        \frac
        {\int_{\R^+} z^p \, \e^{- \beta \lambda z} \, \pi (\d z)}
        {\int_{\R^+} \e^{- \beta \lambda z} \, \pi (\d z)} \\
        \label{eq:where_we_used_ineq}
         & \leq
        \lambda^p
        \frac
        {\int_{\R^+} z^p \, \e^{- \beta z} \, \pi (\d z)}
        {\int_{\R^+} \e^{- \beta z} \, \pi (\d z)}
        \leq \lambda^p C(p,q,\beta),
    \end{align}
    which is the required inequality.

    \paragraph{Step 3}
    In the first inequality of~\eqref{eq:where_we_used_ineq},
    we used that,
    for any increasing function $g \colon \R^+ \to \R$,
    the function
    \[
        F(\gamma) \coloneq \frac
        {\int_{\R^+} g(z) \, \e^{- \gamma z} \, \pi (\d z)}
        {\int_{\R^+} \e^{- \gamma z} \, \pi (\d z)}
    \]
    is a non-increasing function of~$\gamma$.
    This follows from~\cite[Lemma A.3]{CBS-Carrillo2021},
    and could also be proved by showing that~$F'(\gamma)$ is non-positive for all $\gamma \geq 0$.
    A simple calculation indeed gives that
    \[
        F'(\gamma) =
        - \int_{\R^+} g(z) \left(z - \overline z\right) \, L_{\gamma} \pi(\d z),
        \qquad L_{\gamma} \pi(\d z) \coloneq \frac{\e^{- \gamma z} \pi(\d z)}{\int_{\R^+} \e^{- \gamma z} \, \pi(\d z)},
        \qquad \overline z \coloneq \int_{\R^+} z \, L_{\gamma} \pi(\d z).
    \]
    By decomposing the integration domain into the union $[0, \overline z) \cup \{\overline z\} \cup (\overline z, \infty)$,
    and using the fact that $g$ is non-decreasing,
    we have
    \begin{align*}
        F'(\gamma)
         & = - \int_{[0,\overline z)} g(z) \left( z - \overline z\right) \, \pi(\d z) - \int_{(\overline z,\infty)} g(z) \left( z - \overline z\right) \, L_{\gamma} \pi(\d z) \\
         & \leq - g(\overline z)  \int_{[0,\overline z) \cup (\overline z, \infty)} \left( z - \overline z\right) \, L_{\gamma} \pi(\d z)  = 0,
    \end{align*}
    which completes the proof.
\end{proof}

    Armed with this result,
    we are ready to prove the following generalization of~\cite[Lemma 3.3]{carrillo2018analytical}.

\begin{proposition}
    \label{lemma:bound-on-weighted-moment}
    Suppose that~$f$ is bounded from below by $f_{\star}$ and that
    there are real numbers~$u \geq \ell > 0$ and positive constants
    $c_u, C_u, c_{\ell}, C_{\ell}$ such that
    \begin{equation}
        \label{eq:assumptions_for_bound_weighted}
        \forall x \in \R^d, \qquad
        \left\{
        \begin{aligned}
            f(x) - f_{\star} \le c_u |x|^{u}+ C_{u} \\
            f(x) - f_{\star} \ge c_{\ell} |x|^{\ell} - C_{\ell}.
        \end{aligned}
        \right.
    \end{equation}
    Then for all $p, q > 0$
    there are constants $C_1$ and $C_2$ depending on~$(p,q,u,\ell,c_u,C_u,c_{\ell},C_{\ell})$ such that
    \begin{equation}
        \label{eq:statement_bound_weighted_moment}
        \forall \mu \in \mathcal P_q(\R^d), \qquad
        \frac
        {\int_{\R^d} \myabs{x}^p \, \e^{-\beta f(x)} \, \mu(\d x)}
        {\int_{\R^d} \e^{-\beta f(x)} \, \mu(\d x)}
        \leq
        \left( C_1 + C_2 \int_{\R^d} \myabs{x}^q \, \mu(\d x) \right)^{\frac{pu}{q\ell}}
    \end{equation}
    Furthermore, if $u = \ell$ and $p \leq q$,
    then this statement holds with $C_1 = 0$.
\end{proposition}

\begin{proof}
    We assume without loss of generality that $f_{\star} = 0$.
    By the second inequality in~\eqref{eq:assumptions_for_bound_weighted}, we have that
    \begin{equation}
        \label{eq:first_inequality_bound_f}
        \myabs{x}^p = \left( \myabs{x}^\ell \right)^{\frac{p}{\ell}}
        \leq  \left( \frac{f(x) + C_{\ell}}{c_{\ell}}  \right)^{\frac{p}{\ell}}.
    \end{equation}
    For convenience,
    we denote by~$C$ any constant depending only on~$(p,q,\ell,u,c_{\ell},C_{\ell},c_u, C_u)$.
    From~\eqref{eq:first_inequality_bound_f} we deduce that
    \begin{align*}
        \frac
        {\int_{\R^d} \myabs{x}^p \, \e^{-\beta f(x)} \, \mu(\d x)}
        {\int_{\R^d} \e^{-\beta f(x)} \, \mu(\d x)}
         & \leq
        C
        \frac
        {\int_{\R^d} \bigl(f(x) + 1  \bigr)^{\frac{p}{\ell}} \, \e^{-\beta f(x)} \, \mu(\d x)}
        {\int_{\R^d} \e^{-\beta f(x)} \, \mu(\d x)} \\
         & =
        C
        \frac
        {\int_{\R^+} \bigl(y + 1  \bigr)^{\frac{p}{\ell}} \, \e^{-\beta y} \, f_\sharp \mu(\d y)}
        {\int_{\R^+} \e^{-\beta y} \, f_\sharp \mu(\d y)},
    \end{align*}
    where $f_{\sharp} \mu \in \mathcal P(\R^+)$ is the pushforward of $\mu$ by~$f$.
    Using~\cref{lemma:auxiliary_auxiliary_lemma}
    and then the first inequality in~\eqref{eq:assumptions_for_bound_weighted},
    we obtain
    \begin{align*}
        \frac
        {\int_{\R^d} \myabs{x}^p \, \e^{-\beta f(x)} \, \mu(\d x)}
        {\int_{\R^d} \e^{-\beta f(x)} \, \mu(\d x)}
         & \leq C \left( 1 + \int_{\R^+} \myabs{y}^{\frac{q}{u}} \, f_{\sharp} \mu(\d y) \right)^{\frac{p u}{q \ell}}       \\
         & = C \left( 1 + \int_{\R^d} \bigl\lvert f(x) \bigr\rvert^{\frac{q}{u}} \, \mu (\d x) \right)^{\frac{p u}{q \ell}}
        \leq C \left( 1 + \int_{\R^d} \bigl\lvert x \bigr\rvert^{q} \, \mu (\d x) \right)^{\frac{p u}{q \ell}},
    \end{align*}
    which completes the proof of~\eqref{eq:statement_bound_weighted_moment}.

    \paragraph{Proof that~\eqref{eq:statement_bound_weighted_moment} holds with $C_1 = 0$ when $\ell = u$ and $q \geq p$}
    The case $q > p$ follows from the case~$q = p$,
    so we assume the latter.
    Suppose for contradiction that this was not true.
    Then there would exist a sequence~$\{\mu_i\}_{i \in \N}$ of probability measures such that
    \begin{equation}
        \label{eq:contradiction_auxiliary_bound_weighted}
        \frac
        {\int_{\R^d} \myabs{x}^p \, \e^{-\beta f(x)} \, \mu_i(\d x)}
        {\int_{\R^d} \e^{-\beta f(x)} \, \mu_i(\d x)}
        \geq
        i \int_{\R^d} \myabs{x}^{p} \, \mu_i(\d x).
    \end{equation}
    By the first part of the proof,
    this inequality implies that
    \[
        \frac{C_1 + C_2 \int_{\R^d} \myabs{x}^{p} \, \mu_i(\d x)}{\int_{\R^d} \myabs{x}^{p} \, \mu_i(\d x)} \geq i,
    \]
    and so the integral in this expression necessarily converges to~0 in the limit as~$i \to \infty$.
    In turn, this implies that the sequence $\{\mu_i\}_{i \in \N}$ converges in distribution to the Dirac measure at~$0$ when $i \to \infty$.
    But then it holds that
    \[
        \frac
        {\int_{\R^d} \myabs{x}^p \, \e^{-\beta f(x)} \, \mu_i(\d x)}
        {\int_{\R^d} \e^{-\beta f(x)} \, \mu_i(\d x)}
        \left( \int_{\R^d} \myabs{x}^{p} \, \mu_i(\d x) \right)^{-1}
        \leq
        \frac{\e^{- \beta f_{\star}}}{\int_{\R^d} \e^{-\beta f(x)} \, \mu_i(\d x)}
        \xrightarrow[i \to \infty]{} \e^{-\beta \bigl(f_{\star} - f(0)\bigr)},
    \]
    which contradicts~\eqref{eq:contradiction_auxiliary_bound_weighted}.
\end{proof}
\begin{remark}
    When $p = q > 0$, \cref{lemma:bound-on-weighted-moment} implies that
    \begin{equation}
        \label{eq:contradiction}
        \forall \mu \in \mathcal P_p(\R^d), \qquad
        \frac
        {\int_{\R^d} \myabs{x}^p \, \e^{-\beta f(x)} \, \mu(\d x)}
        {\int_{\R^d} \e^{-\beta f(x)} \, \mu(\d x)}
        \leq \left( C_1 + C_2 \int_{\R^d} \myabs{x}^{p} \, \mu(\d x) \right)^{\frac{u}{\ell}}.
    \end{equation}
    We motivate in this remark that the exponent $\frac{u}{\ell}$ is optimal.
    Indeed, let $p = 1$ and consider the function~$f\colon \R^2 \to \R$ given by $f(x_1, x_2) = \lvert x_1 \rvert^\ell + \lvert x_2 \rvert^{u+\varepsilon}$,
    for any $u \geq \ell > 0$ and $\varepsilon > 0$.
    Then for the probability measures
    \[
        \mu_i = \frac{1}{i^{u}} \delta_{\left(i^u, 0\right)} + \left(1 - \frac{1}{i^{u}} \right) \delta_{\left(0,i^\ell\right)},
    \]
    it holds that
    \[
        \frac
        {\int_{\R^d} \myabs{x} \, \e^{-\beta f(x)} \, \mu_i(\d x)}
        {\int_{\R^d} \e^{-\beta f(x)} \, \mu_i(\d x)} \sim i^u
        \qquad
        \text{ and }
        \qquad
        \int_{\R^2} \myabs{x} \, \mu_i(\d x) \sim i^\ell
        \qquad
        \text{ as $i \to \infty$},
    \]
    where $\sim$ signifies asymptotic equivalence.
    This example suggests that~\cref{lemma:auxiliary_auxiliary_lemma} is sharp.
\end{remark}

\subsection{Generalized Wasserstein stability estimates}
By adapting the proof of~\cref{lemma:stab:wmean-wcov-basic},
it is simple to prove Wasserstein stability estimates with different values of~$p$ on the left and right sides of the inequality.
More precisely, we have the following statement.
\begin{lemma}
    \label{lemma:nonsym_stab_estimates}
    Suppose that $f \in \mathcal A(s, 0, u)$.
    Then
    \begin{itemize}
        \item
              for all $R > 0$ and $p > 1$, there is $C = C(p, R)$ such that
              \begin{equation}
                  \label{eq:modif_stab_M}
                  \forall (\mu, \nu) \in \mathcal P_{\varrho,R}\bigl(\real^d\bigr) \times \mathcal P_{\varrho,R}\bigl(\real^d\bigr), \qquad
                  \myabs{\wm(\mu) - \wm(\nu)}
                  \le C \, \wasserstein_p(\mu, \nu), \qquad
                  \varrho \coloneq \frac{(s+1)p}{p-1}
              \end{equation}
        \item
              for all $R > 0$ and $p > 1$, there is $C = C(p, R)$ such that
              \begin{equation}
                  \label{eq:modif_stab_C}
                  \forall (\mu, \nu) \in \mathcal P_{\varrho,R}\bigl(\real^d\bigr) \times \mathcal P_{\varrho,R}\bigl(\real^d\bigr), \qquad
                  \myabs{\wc(\mu) - \wc(\nu)}
                  \le C \, \wasserstein_p(\mu, \nu), \qquad
                  \varrho \coloneq \frac{(s+2)p}{p-1}
              \end{equation}
    \end{itemize}
\end{lemma}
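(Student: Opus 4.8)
The plan is to revisit the proof of \cref{lemma:stab:wmean-wcov-basic}, which reduces the stability estimates for $\wm$ and $\wc$ to the abstract ratio estimate of \cref{lemma:stab-aux} applied with $h(x) = \e^{-\beta f(x)}$ and $g(x) = \e^{-\beta f(x)} x$ (for the weighted mean) or $g(x) = \e^{-\beta f(x)} (x \otimes x)$ (for the unnormalised weighted second moment). Under \cref{assump:f-local-lip-bound} and the boundedness of $\e^{-\beta f}$ from above (which holds since $f \geq f_\star$), these pairs satisfy the local Lipschitz condition~\eqref{item:lem-stab-aux:loc-lip} with $\varsigma = s+1$ in the first case and $\varsigma = s+2$ in the second, exactly as in the proof of \cref{lemma:stab:wmean-wcov-basic} with $\ell = 0$. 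The only step in that proof invoking the constraint $p \geq \varsigma + 1$ is the H\"older bound for $\iint (1+|x|+|y|)^{\varsigma}\,\myabs{x-y}\,\pi(\d x\,\d y)$; every other ingredient remains valid for all $p > 1$.

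Accordingly, I would first establish a non-symmetric version of \cref{lemma:stab-aux}: for every $p > 1$ and $R > 0$ there is $C = C(p,R)$ such that, with $\varrho := \tfrac{\varsigma p}{p-1}$,
\[
\forall (\mu, \nu) \in \mathcal P_{\varrho,R}(\real^d) \times \mathcal P_{\varrho,R}(\real^d), \qquad
\norm{ \frac{\mu[g]}{\mu[h]} - \frac{\nu[g]}{\nu[h]} } \leq C\, W_p(\mu, \nu).
\]
The argument is verbatim that of \cref{lemma:stab-aux} until the H\"older step, which I would instead carry out with the conjugate exponents $\tfrac{p}{p-1}$ and $p$:
\[
\iint (1+|x|+|y|)^{\varsigma}\,\myabs{x-y}\,\pi(\d x\,\d y)
\leq \left( \iint (1+|x|+|y|)^{\varrho}\,\pi(\d x\,\d y) \right)^{\frac{p-1}{p}}
\left( \iint \myabs{x-y}^{p}\,\pi(\d x\,\d y) \right)^{\frac{1}{p}}.
\]
Since $\varrho = \tfrac{\varsigma p}{p-1}$, the first factor is bounded by $C(1 + 2R^{\varrho})^{(p-1)/p}$ uniformly over couplings $\pi \in \Pi(\mu,\nu)$, as it depends only on the marginals; infimising over $\pi$ then produces $W_p(\mu,\nu)$. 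The remaining ingredients of the proof — the lower bound $\eta$ on $\mu[h]$ from~\eqref{eq:lem-stab-aux:denom-pos} and the factor $\int \norm{g}\,\d\nu$ — are controlled by moments of order $1$ when $g(x) = \e^{-\beta f(x)} x$ (using $\norm{g(x)} \leq \e^{-\beta f_\star}\myabs{x}$) and of order $2$ when $g(x) = \e^{-\beta f(x)}(x\otimes x)$ (using $\norm{g(x)}_{\rm F} \leq \e^{-\beta f_\star}\myabs{x}^2$); as $\varrho \geq 1$, respectively $\varrho \geq 2$, for every $p > 1$, these impose no additional constraint. This proves~\eqref{eq:modif_stab_M} with $\varrho = \tfrac{(s+1)p}{p-1}$, and likewise the bound on $\norm{\mu[g]/\mu[h] - \nu[g]/\nu[h]}$ for $g(x) = \e^{-\beta f(x)}(x\otimes x)$ with $\varrho = \tfrac{(s+2)p}{p-1}$.

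To deduce~\eqref{eq:modif_stab_C} I would mimic the proof of \cref{lemma:stab:wmean-wcov-basic}: write $\wc(\mu) = \mu[g]/\mu[h] - \wm(\mu)\otimes\wm(\mu)$ with $g(x) = \e^{-\beta f(x)}(x\otimes x)$ and split via the triangle inequality~\eqref{eq:triangle_inequality_wcov_bound}. The ratio term was just handled. For the remaining term,~\eqref{eq:aux_weighted_mean} gives $\norm{\wm(\mu)\otimes\wm(\mu) - \wm(\nu)\otimes\wm(\nu)}_{\rm F} \leq \bigl(\myabs{\wm(\mu)} + \myabs{\wm(\nu)}\bigr)\myabs{\wm(\mu) - \wm(\nu)}$; since $\mathcal P_{(s+2)p/(p-1),R} \subset \mathcal P_{(s+1)p/(p-1),R}$, I may apply~\eqref{eq:modif_stab_M} to bound $\myabs{\wm(\mu) - \wm(\nu)}$, while the uniform bound $\myabs{\wm(\mu)} \leq \e^{-\beta f_\star} R / \eta$ (from $f \geq f_\star$ and~\eqref{eq:lem-stab-aux:denom-pos}) controls the prefactor, which closes the estimate.

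I expect the main difficulty to be pure bookkeeping: checking that no auxiliary quantity secretly requires a moment of order higher than $\tfrac{\varsigma p}{p-1}$ — in particular that the crude bound $\norm{g(x)} \leq C(1+\myabs{x})^{\varsigma+1}$ used inside \cref{lemma:stab-aux} can here be replaced by the sharper linear or quadratic bounds available for these specific $g$'s, so that the exponent appearing in $\varrho$ is exactly $\varsigma$ and not $\varsigma+1$. One should also observe that when $p$ is large, $\varrho$ may be smaller than $p$, in which case $W_p(\mu,\nu)$ can be infinite and the estimate is vacuous; this causes no trouble, since the constant $C(p,R)$ depends only on the bound $R$ on the $\varrho$-th moments.
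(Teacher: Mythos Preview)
Your proposal is correct and follows precisely the approach the paper indicates: the paper does not give a proof at all, merely stating that the result follows ``by adapting the proof of~\cref{lemma:stab:wmean-wcov-basic}'', which is exactly what you do. Your careful observation that the generic bound $\lVert g(x)\rVert \le C(1+|x|)^{\varsigma+1}$ from \cref{lemma:stab-aux} must be replaced by the sharper bounds $\lVert g(x)\rVert \le \e^{-\beta f_\star}|x|$ and $\lVert g(x)\rVert_{\rm F} \le \e^{-\beta f_\star}|x|^2$ (so that only moments of order~$1$ and~$2$, not $\varsigma+1$, are needed for the factor $\int\lVert g\rVert\,\d\nu$) is the one point requiring attention beyond a mechanical transcription, and you handle it correctly.
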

\begin{remark}
    For $p = s+2$, the bound~\eqref{eq:modif_stab_M} coincides with~\eqref{eq:stab_wmean_simple},
    and for $p = s+3$, the bound~\eqref{eq:modif_stab_C} coincides with~\eqref{eq:stab_wcov_simple}.
\end{remark}

\paragraph{Acknowledgments}
We are grateful to Yunan Yang, Arnaud Guyader and Louis-Pierre Chaintron for useful discussions,
to Hui Huang and Hicham Kouhkouh for pointing out an issue with Assumption \cref{assump:f-at-infinity},
and to the anonymous reviewers for very useful comments and suggestions.
We also thank the Lorentz Center (Leiden, the Netherlands) for hosting the workshop ``Purpose-driven particle systems'' in March 2023,
which enabled discussions about this work.
FH is supported by start-up funds at the California Institute of Technology and by NSF CAREER Award 2340762. FH was also
supported by the Deutsche Forschungsgemeinschaft (DFG, German Research Foundation) via
project 390685813 - GZ 2047/1 - HCM.
UV is partially supported by the European Research Council (ERC) under the EU Horizon 2020 programme (grant agreement No 810367),
and by the Agence Nationale de la Recherche under grants ANR-21-CE40-0006 (SINEQ) and ANR-23-CE40-0027 (IPSO).

\printbibliography
\end{document}